\newtheorem{thm}{Theorem}[section]
\newtheorem{cor}[thm]{Corollary}
\newtheorem{lem}[thm]{Lemma}
\theoremstyle{definition}
\newtheorem{defn}[thm]{Definition}
\newtheorem{exm}[thm]{Example}
\newtheorem{rem}[thm]{Remark}
\numberwithin{equation}{section}
\DeclareMathOperator{\NN}{\mathbb {N}}
\DeclareMathOperator{\ZZ}{\mathbb {Z}}
\DeclareMathOperator{\RR}{\mathbb {R}}
\DeclareMathOperator{\lk}{lk}
\DeclareMathOperator{\conv}{conv}
\DeclareMathOperator{\rp}{rp}
\DeclareMathOperator{\ord}{ord}
\DeclareMathOperator{\girth}{girth}
\DeclareMathOperator{\NP}{NP}
\DeclareMathOperator{\supp}{supp}
\DeclareMathOperator{\Inter}{Inter}
\DeclareMathOperator{\reg}{reg}
\DeclareMathOperator{\charr}{char}
\def\D {\Delta}
\def\a {\mathbf a}
\def\b {\mathbf b}
\def\m {\mathfrak m}
\def\F {\mathfrak F}
\def\k {\mathrm{k}}
\def\h {\widetilde{H}}
\def\c {\mathbf{c}}
\begin{document}

\title[Integral Closure] {Integral closure of small powers of edge ideals and their regularity}

\author{Nguyen Cong Minh}
\address{Department of Mathematics, Hanoi National University of Education, 136 Xuan Thuy, Hanoi,
	Vietnam}
\email{minhnc@hnue.edu.vn}
\author{Thanh Vu}
\email{vuqthanh@gmail.com}

\subjclass[2020]{05E40, 13D02, 13F55}
\keywords{Integral closure; edge ideals of graphs}

\date{}

\dedicatory{Dedicated to Professor David Eisenbud on the occasion of his 75th birthday}
\commby{}
\maketitle
\begin{abstract}
    Let $I(G)$ be the edge ideal of a simple graph $G$ over a field $\k$. We prove that 
    $$\reg( \overline {I(G)^s}) = \reg(I(G)^s),$$
    for all $s \le 4$. Furthermore, we provide an example of a graph $G$ such that 
    $$\reg I(G)^s = \reg \overline{I(G)^s} = \reg I(G)^{(s)} = \begin{cases} 5 + 2s & \text{ if } \charr \k = 2 \\ 4 + 2s & \text{ if } \charr \k \neq 2, \end{cases}$$
    for all $s \ge 1.$
\end{abstract}

\maketitle

\section{Introduction}\label{sect_intro}
Let $S = \k[x_1, ..., x_n]$ be a standard graded polynomial ring over a field $\k$. Let $I = I(G)$ be the edge ideal of a simple graph $G$ on the vertex set $[n]$. For each $s \ge 1$, we have the following containments of ideals $I^s \subseteq \overline{I^s} \subseteq I^{(s)}$ where $\overline{I^s}$ is the integral closure of $I^s$ and $I^{(s)}$ is the $s$-th symbolic powers of $I$ (see Section \ref{sec_basic} for the definition of integral closure and symbolic powers). In \cite{MNPTV}, together with Nam, Phong, and Thuy, we laid out a general procedure for comparing regularity of monomial ideals and proved that $\reg I^s = \reg I^{(s)}$ for $s = 2, 3$. Subsequently, in \cite{MV1, MV2}, we proved a rigidity property of the regularity of intermediate ideals lying between $I^s$ and $I^{(s)}$. More precisely, for monomial ideals $J \subseteq K$, we define $\Inter(J,K)$ the set of monomial ideals $L$ such that $L = J + (f_1, \ldots, f_t)$ where $f_i$ are among minimal monomial generators of $K$. In this paper, we apply this procedure to prove that all ideals in $\Inter(I^s, \overline{I^s})$ have the same regularity for $s \le 4$.

\begin{thm}\label{main_thm}Let $I$ be the edge ideal of a simple graph $G$. For all $s \le 4$ and all intermediate ideals $J \in \Inter(I^s,\overline{I^s})$, we have
$$\reg J = \reg \overline{I^s} = \reg (I^s).$$
\end{thm}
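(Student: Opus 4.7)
The plan is to apply the regularity-comparison procedure of \cite{MNPTV} together with the rigidity results of \cite{MV1,MV2}. This machinery reduces the claim that every $J \in \Inter(I^s, \overline{I^s})$ has $\reg J = \reg I^s$ to a uniform regularity bound on colon ideals $J : f$, where $f$ ranges over minimal monomial generators of $\overline{I^s}$ not already in $I^s$.

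The first step is to enumerate those extra generators. For the edge ideal of a graph, a monomial $\x^\a$ lies in $\overline{I^s}$ when $\a/s$ belongs to the Newton polytope of $I$, so the extra minimal generators come from odd-cycle configurations in $G$. For $s \le 4$ the list is short: extras arise from a single odd cycle of length $3$, $5$, or $7$, from disjoint or overlapping unions of two triangles, from a triangle combined with a pentagon, and from each of these decorated with additional edge or vertex monomials to reach the correct total degree. For every such $f$ and every intermediate ideal $J$ not containing $f$, I would use the short exact sequence
\[
0 \to (S/(J:f))(-\deg f) \to S/J \to S/(J+(f)) \to 0,
\]
which gives $\reg(J+(f)) \le \max\bigl(\reg J,\, \reg(J:f) + \deg f - 1\bigr)$, and then follow \cite{MNPTV,MV1,MV2} in establishing $\reg(J:f) + \deg f \le \reg J$ by identifying $J:f$ as an edge-ideal-type object attached to a modified graph and bounding its regularity inductively on $s$ and on the size of $G$.

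The main obstacle will be the case $s = 4$, where the list of extras is much richer than for $s \le 3$ and multi-cycle configurations (two triangles sharing vertices, or a triangle attached to a pentagon) produce colon ideals whose combinatorial structure depends sensitively on how the cycles sit inside $G$. Verifying the regularity inequality $\reg(J:f) + \deg f \le \reg J$ uniformly over all these cases, while keeping the bound tight enough not to exceed $\reg I^4$, will be the bulk of the technical work. Once this is done, the rigidity machinery propagates the equality from $I^4$ to every intermediate ideal up to $\overline{I^4}$; the cases $s \le 3$ admit a parallel but substantially shorter argument in the same framework, since the possible odd-cycle configurations in those degrees reduce essentially to a single triangle or pentagon.
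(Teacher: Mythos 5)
Your strategy only proves one of the two inequalities. The short exact sequence you write down gives $\reg\bigl(J+(f)\bigr) \le \max\bigl(\reg J,\ \reg(J:f)+\deg f-1\bigr)$, so a bound of the form $\reg(J:f)+\deg f\le \reg J$ yields, after adjoining the extra generators one at a time, only the chain $\reg \overline{I^s}\le\cdots\le\reg I^s$. To conclude equality you need the reverse inequality $\reg I^s\le \reg\overline{I^s}$, and the exact sequence delivers that only under the \emph{strict} bound $\reg(J:f)+\deg f\le \reg J-1$, which is false in general: take $G$ to be two disjoint triangles on $\{1,\dots,6\}$ and $s=3$, so that $f=x_1\cdots x_6$ is the unique extra generator; then $I^3:f=\m$ and $\reg(I^3:f)+\deg f=1+6=7=\reg I^3$, so the sequence cannot decide whether adjoining $f$ drops the regularity. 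Most of the paper's work goes precisely into this reverse inequality, and it uses different tools: Takayama's formula and the degree complexes $\Delta_\a$. At an extremal exponent $\a$ of $I^s$ one shows that either $\sqrt{I^s:x^\a}=\sqrt{\overline{I^s}:x^\a}$ (so the degree complexes coincide and Lemma \ref{extremal_red} applies), or $\a$ admits a ``good decomposition'' $\a=\b+\c$ with $\supp\b$ covering the two odd cycles; this bounds $\reg I^s$ by $\reg I(H)^t+|\b|$ for an induced subgraph $H$, and the mixed-sum formula of \cite[Theorem 1.1]{NV1} (Theorem \ref{thm_mixed_sum}) then bounds $\reg\overline{I^s}$ from below by the same quantity. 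Nothing in your proposal plays this role.

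A secondary problem is your enumeration of the extra generators. By Theorem \ref{expansion_edge} (ultimately \cite{SVV,OH}), the minimal generators of $\overline{I^s}$ outside $I^s$ are products $x_{C_1}\cdots x_{C_{2a}}\cdot x_{e_1}\cdots x_{e_b}$ of an \emph{even} number of pairwise disjoint, mutually non-adjacent odd cycles and some edges. A single odd cycle of length $3$, $5$, or $7$, or two triangles that overlap or are joined by an edge, yield monomials already lying in the relevant ordinary power and contribute nothing new; moreover $\overline{I^s}=I^s$ for $s\le 2$. The only genuinely new configurations are: for $s=3$, a pair of disjoint non-adjacent triangles; for $s=4$, such a pair times an edge, or a triangle together with a disjoint non-adjacent pentagon. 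Your claim that the cases $s\le 3$ ``reduce essentially to a single triangle or pentagon'' is therefore incorrect, and getting this enumeration right is the starting point of the entire case analysis.
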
 

The asymptotic property of regularity of integral closure of powers $\reg \overline{I^s}$ was established in \cite{CHT} alongside the asymptotic property of regularity of powers $\reg I^s$ in \cite{CHT, Kod}. Nonetheless, to our knowledge, there was no explicit calculation of regularity of integral closure of powers of edge ideals in the case where $I$ is not normal. By \cite{SVV,OH}, $I$ is not normal, i.e. $\overline{I^s} \neq I^s$ for some $s > 1$ if and only if $G$ has an induced subgraph which is the disjoint union of two odd cycles. In the simplest case where $I$ is not normal, i.e. $G$ is an odd bicyclic graphs, Kumar and Kumar \cite{KK} prove that $\reg I^s = \reg \overline{I^s}$ for all $s$. The regularity of powers of edge ideals of bicyclic graphs is calculated in few cases \cite{CJNP, G}; though, these edge ideals are normal. The inequality $\reg \overline{I^s} \le \reg I^s$ is known in some cases \cite{BCDMS}. 

We realize that the Stanley-Reisner ideal $I_\Delta$ of a one-dimensional simplicial complex $\Delta$ provides plenty of examples of non-normal squarefree monomial ideals. Indeed, if $\Delta$ viewed as a graph, contains an induced subgraph that is isomorphic to the complete bipartite graph $K_{3,3}$, then $I_\Delta$ is non-normal. Using the arguments in \cite{MV2},  we deduce the following

\medskip

\noindent\textbf{Theorem \ref{dim1}.} Let $I = I_\Delta$ be the Stanley-Reisner ideal of a one-dimensional simplicial complex $\Delta$. Assume that $I$ is non-normal. Then $\girth \Delta \le 4$. Furthermore, for all $s \ge 1$, and all $J \in \Inter(I^s, \overline{I^s})$, we have 
$$\reg J = \reg I^s = \begin{cases} 3s & \text{ if } \girth \Delta = 3\\
2s + 1 & \text{ if } \girth \Delta = 4.\end{cases}$$

This paper is a continuation of \cite{MNPTV} where we analyse the degree complex $\Delta_\a(J)$ via the radical ideal $\sqrt{J:x^\a}$. When studying the degree complex $\Delta_\a(I^s)$ with $\a$ supports on two odd cycles, we discover the following example whose all powers have regularity depend on the characteristic of the base field.

\begin{exm} Let 
\begin{gather*}
I=(x_1y_1, x_2y_1, x_3y_1, x_7y_1, x_9y_1, x_1y_2, x_2y_2, x_4y_2, x_6y_2, x_{10}y_2,x_1y_3, x_3y_3, x_5y_3,x_6y_3,\\
x_8y_3, x_2y_4, x_4y_4, x_5y_4, x_7y_4, x_8y_4,x_3y_5, x_4y_5, x_5y_5, x_9y_5, x_{10}y_5,x_6y_6,x_7y_6, x_8y_6,x_9y_6,\\  x_{10}y_6,z_1z_2,z_2z_3,z_1z_3,z_4z_5,z_5z_6,z_4z_6) \subseteq k[x_1,\ldots,x_{10},y_1,\ldots,y_6,z_1,\ldots,z_6],
\end{gather*}
be the edge ideal of the disjoint union of two triangles and the graph in the Dalili and Kummini's example \cite{DK}. By Corollary \ref{cor_char_dependence}, for each $s \ge 1$, we have 
$$\reg I^s = \reg \overline{I^s} = \reg I^{(s)} = \begin{cases} 5 + 2s & \text{ when } \charr \k = 2 \\
4 + 2s &  \text{ when } \charr \k \neq 2. \end{cases}$$
In particular, $\reg I^s, \reg \overline{I^s}, \reg I^{(s)},$ and their asymptotic constants all depend on the characteristic of the base field.
\end{exm}
The example shows that the asymptotic linearity constant of $\reg I^s$ is not combinatorial. Note that, for edge ideals in the examples of Katzman \cite{Kat} and Dalili and Kummini \cite{DK}, the regularity of their second powers do not depend on the characteristic of the base field.

We now describe the idea of proof of Theorem \ref{main_thm}. For a monomial ideal $J$, an exponent $(\a,i) \in \NN^n \times \NN$ is called an extremal exponent of $J$ if $\reg S/J = |\a| + i$ and $\lk_{\Delta_\a(J)} F$ has a non-vanishing homology in degree $i-1$ for some face $F$ of $\Delta_\a(J)$ such that $F \cap \supp \a = \emptyset$. The steps to prove Theorem \ref{main_thm} ($s = 3$ and $s = 4$) are:
\begin{enumerate}
    \item Let $J_1 \subseteq J_2 \in \Inter(I^s,\overline{I^s})$ be intermediate ideals. Let $(\a,i)$ be an extremal exponent of $J_2$. We prove that if $\sqrt{J_2:x^\a} \neq \sqrt{J_1:x^\a}$, then there exists a variable $x_t$ such that $x_t \in \sqrt{J_2:x^\a}$ and $x_t \notin \supp x^\a$. By induction, Lemma \ref{red0}, and Lemma \ref{extremal_red} we deduce that $\reg J_2 \le \reg J_1.$
    \item For each form $x^\a \notin I^s$, for which $\Delta_\a(I^s) \neq \Delta_\a(\overline{I^s})$, using Lemma \ref{red0} and induction, we reduce to the case where $G$ is the disjoint union of a smaller subgraph and odd cycles. We then use \cite[Theorem 1.1]{NV1} to establish the reverse inequality $\reg I^s \le \reg \overline{I^s}$.
\end{enumerate}

Since we do not have a mixed sum type formula for the regularity of the integral closure of powers, we need to consider disconnected graphs in our arguments. The main obstruction to carry out this comparison for higher powers is that when $s \ge 5$, the difference between $\sqrt{\overline{I^s}:x^\a}$ and $\sqrt{I^s:x^\a}$ are hard to describe.

Now we explain the organization of the paper. In Section \ref{sec_basic}, we recall some notation and basic facts about Castelnuovo-Mumford regularity, the integral closure, and the degree complexes of monomial ideals. In Section \ref{sec_proof}, we prove Theorem \ref{main_thm}. In section \ref{sec_dim1}, we compute the regularity of intermediate ideals in $\Inter(I_\Delta^s,\overline{I_\Delta^s})$ for Stanley-Reisner ideals of one-dimensional simplicial complexes. In Section \ref{sec_ex}, we provide examples of graphs whose all powers have regularity depend on the characteristic of the base field.

\section{Castelnuovo-Mumford regularity, integral closure, and degree complexes}\label{sec_basic}
In this section, we recall some definitions and properties concerning Castelnuovo-Mumford regularity, the degree complexes of monomial ideals, symbolic powers of squarefree monomial ideals, and integral closure of powers of edge ideals. See \cite{BH, D, E, S, V} for more details.

Throughout the paper, let $S = \k[x_1,...,x_n]$ be a standard graded polynomial ring over a field $\k$. For statements without proper citations, we refer to \cite{MNPTV} for proofs.

\subsection{Graphs and their edge ideals}

Let $G$ denote a finite simple graph over the vertex set $V(G)=[n] = \{1,2,\ldots,n\}$ and the edge set $E(G)$. For a vertex $x\in V(G)$, let the neighbours of $x$ be the subset $N_G(x)=\{y\in V(G)~|~ \{x,y\}\in E(G)\}$, and set $N_G[x]=N_G(x)\cup\{x\}$. For a subset $U$ of the vertices set $V(G)$, $N_G(U)$ and $N_G[U]$ are defined by $N_G(U)=\cup_{u\in U}N_G(u)$ and $N_G[U]=\cup_{u\in U}N_G[u]$. If $G$ is fixed, we shall use $N(U)$ or $N[U]$ for short.

An independent set in $G$ is a set of pairwise non-adjacent vertices.

A subgraph $H$ is called an induced subgraph of $G$ if for any vertices $u,v\in V(H)\subseteq V(G)$ then $\{u,v\}\in E(H)$ if and only if $\{u,v\}\in E(G)$.

An induced matching is a subset of the edges that do not share any vertices and it is an induced subgraph. The induced matching number of $G$, denoted by $\mu(G)$, is the largest size of an induced matching in $G$.

A $m$-cycle in $G$ is a sequence of $m$ distinct vertices $1,\ldots, m\in V(G)$ such that $\{1,2\},\ldots, \{m-1,m\}, \{m,1\}$ are edges of $G$. The girth of $G$, denoted $\girth(G)$ is the size of a smallest induced cycle in $G$.

A clique in $G$ is a complete subgraph of $G$. We also call a clique of size $3$ a triangle.

A graph $G$ is bipartite if $[n]$ can be partition into two disjoint subsets $[n] = U\cup V$ such that every edge connects a vertex in $U$ to one in $V$.

The edge ideal of $G$ is defined to be
$$I(G)=(x_ix_j~|~\{i,j\}\in E(G))\subseteq S.$$
For simplicity, we often write $i \in G$ (resp. $ij \in G$) instead of $i \in V(G)$ (resp. $\{i,j\} \in E(G)$). By abuse of notation, we also call $x_ix_j \in I(G)$ an edge of $G$.

\subsection{Simplicial complexes and Stanley-Reisner correspondence} 
Let $\Delta$ be a simplicial complex on $[n]=\{1,\ldots, n\}$ that is a collection of subsets of $[n]$ closed under taking subsets. We put $\dim F = |F|-1$, where $|F|$ is the cardinality of $F$. The dimension of $\Delta$ is $\dim \Delta = \max \{ \dim F \mid F \in \Delta \}$.  The set of its maximal elements under inclusion, called by facets, is denoted by $\F(\Delta)$.

A simplicial complex $\D$ is called a cone over $x\in [n]$ if $x\in B$ for any $B\in \F(\Delta)$. If $\D$ is a cone, it is acyclic (i.e., has vanishing reduced homology).

For a face $F\in\Delta$, the link of $F$ in $\Delta$ is the subsimplicial complex of $\Delta$ defined by
$$\lk_{\Delta}F=\{G\in\Delta \mid  F\cup G\in\Delta, F\cap G=\emptyset\}.$$

For each subset $F$ of $[n]$, let $x_F=\prod_{i\in F}x_i$ be a squarefree monomial in $S$. We now recall the Stanley-Reisner correspondence

\begin{defn}For a squarefree monomial ideal $I$, the Stanley-Reisner complex of $I$ is defined by
$$ \Delta(I) = \{ F \subset [n] \mid x_F \notin I\}.$$

For a simplicial complex $\Delta$, the Stanley-Reisner ideal of $\Delta$ is defined by
$$I_\Delta = (x_F \mid  F \notin \Delta).$$
The Stanley-Reisner ring of $\Delta$ is $\k[\Delta] =  S/I_\Delta.$
\end{defn}
From the definition, it is easy to see the following:
\begin{lem}\label{cone} Let $I, J$ be squarefree monomial ideals of  $S = \k [x_1,\ldots, x_n]$. Then 
\begin{enumerate}
    \item $\Delta(I)$ is a cone over $t \in [n]$ if and only if $x_t$ does not divide any minimal generator of $I$.
    \item $I \subseteq J$ if and only if $\Delta(I) \supseteq \Delta(J)$.
    \item $\Delta(I + J) = \Delta(I) \cap \Delta(J).$
    \item $\Delta(I \cap J) = \Delta(I) \cup \Delta(J).$
\end{enumerate}
\end{lem}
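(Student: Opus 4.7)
The proof is a matter of unpacking definitions; the main work is bookkeeping. The plan is to treat each part as a direct consequence of the bijection $F \in \Delta(I) \Leftrightarrow x_F \notin I$ between squarefree monomials and faces.

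For part (1), I would first rewrite the cone condition as: for every $F \in \Delta(I)$, $F \cup \{t\} \in \Delta(I)$. Passing through the correspondence, this becomes the statement that multiplication by $x_t$ cannot send a squarefree monomial out of the complement of $I$. Minimality then enters: $x_t$ divides some minimal generator $m$ of $I$ if and only if the face $F = \supp(m/x_t)$ satisfies $x_F \notin I$ (by minimality of $m$) while $x_F x_t = m \in I$, which is exactly a witness that $\Delta(I)$ is not a cone over $t$. I would spell out both implications from this witness.

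For part (2), the forward direction is immediate from the contrapositive of the defining equivalence: $x_F \in I \Rightarrow x_F \in J$ yields $F \notin \Delta(I) \Rightarrow F \notin \Delta(J)$. For the reverse, I would use that a squarefree monomial ideal is determined by the squarefree monomials it contains, so $\Delta(J) \subseteq \Delta(I)$ forces every squarefree generator $x_F$ of $I$ (corresponding to $F \notin \Delta(I)$, hence $F \notin \Delta(J)$) to lie in $J$.

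Parts (3) and (4) will follow by combining the bijection with the observation that for monomial ideals, $x_F \in I + J$ iff $x_F \in I$ or $x_F \in J$ (a single monomial in the sum must come from one of the summands, since the generators are monomials), while $x_F \in I \cap J$ iff $x_F$ lies in both. Taking complements in $2^{[n]}$ yields the two set-theoretic identities. The only step requiring a small amount of care is (1), which depends essentially on the minimality of generators; the other parts are purely set-theoretic consequences of the Stanley--Reisner correspondence and should go through without obstacle.
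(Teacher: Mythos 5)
Your proof is correct and is exactly the argument the paper has in mind: the paper offers no written proof (it states the lemma as following "from the definition"), and your unpacking of the correspondence $F\in\Delta(I)\Leftrightarrow x_F\notin I$, with the minimality observation needed for part (1), is the standard and intended verification.
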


\subsection{Castelnuovo-Mumford regularity}\label{subsection_reg} 
Let $\m = (x_1,\ldots, x_n)$ be the maximal homogeneous ideal of $S$. For a finitely generated graded $S$-module $L$, let
$$a_i(L)=
\begin{cases}
\max\{j\in\ZZ \mid H_{\m}^i(L)_j \ne 0\} &\text{ if  $H_{\m}^i(L)\ne 0$}\\ 
-\infty &\text{ otherwise,}
\end{cases}
$$
where $H^{i}_{\m}(L)$ denotes the $i$-th local cohomology module of $L$ with respect to $\m$. Then, the Castelnuovo-Mumford regularity (or regularity for short) of $L$ is defined to be
$$\reg(L) = \max\{a_i(L) +i\mid i = 0,\ldots, \dim L\}.$$

For a non-zero and proper homogeneous ideal $J$ of $S$ we have $\reg(J)=\reg(S/J)+1$. 

We frequently use the result on the regularity of powers of mixed sum \cite[Theorem 1.1]{NV1} in this paper, so we state a part of it here for ease of reference.
\begin{thm}\label{thm_mixed_sum} Let $I \subseteq R$ and $J \subseteq S$ be proper monomial ideals in standard graded polynomial rings over a field $\k$. Let $P$ be the sum of the extensions of $I$ and $J$ to $R \otimes_\k S$. Then
$$\reg P^s = \max_{i\in [1,s-1], j\in [1,s]} \left ( \reg I^i + \reg J^{s-i}, \reg I^j + \reg J^{s-j+1} - 1\right ).$$
In particular, 
$$\reg P^s \ge \reg I^j + \reg J^{s-j+1} - 1, \text{ for } j \in [1,s].$$
\end{thm}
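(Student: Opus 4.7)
The plan is to proceed by induction on $s$, starting from the well-known Künneth formula $\reg(I+J) = \reg I + \reg J - 1$, which handles the base $s = 1$ via the observation that the tensor product of minimal free resolutions over $R$ and $S$ resolves $(R\otimes_\k S)/(I+J) \cong R/I \otimes_\k S/J$ minimally.

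Two structural facts underlie every step. Since generators of $I^a$ and $J^b$ live in disjoint variable sets, the intersection $I^a \cap J^b$ (inside $R \otimes_\k S$) coincides with the product $I^a J^b$, so the Mayer--Vietoris-type sequence
$$0 \to I^a J^b \to I^a \oplus J^b \to I^a + J^b \to 0$$
is exact. Combined with the $s = 1$ case applied to $\reg(I^a + J^b)$, this gives $\reg(I^a J^b) = \reg I^a + \reg J^b$ whenever $a, b \geq 1$. These two computations account for the two families in the max: for each $i \in [1, s-1]$ the product $I^i J^{s-i}$ is a subideal of $P^s$ obtained by selecting among minimal generators, giving $\reg P^s \geq \reg I^i + \reg J^{s-i}$; for each $j \in [1, s]$ one has $P^s \subseteq I^j + J^{s-j+1}$, and the connecting homomorphism in the long exact sequence of local cohomology produces the shifted bound $\reg P^s \geq \reg I^j + \reg J^{s-j+1} - 1$.

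For the upper bound I would combine the decomposition $P^s = I \cdot P^{s-1} + J \cdot P^{s-1}$ with the exact sequence
$$0 \to IJ \cdot P^{s-2} \to I \cdot P^{s-1} \oplus J \cdot P^{s-1} \to P^s \to 0,$$
where the intersection identity $(I \cdot P^{s-1}) \cap (J \cdot P^{s-1}) = IJ \cdot P^{s-2}$ again uses disjointness of variables. The inductive hypothesis bounds the regularity of each term on the left by the proposed max applied to smaller powers, and a reindexing recovers the stated formula. The main obstacle is verifying that the lower bounds are actually attained, especially the terms with the $-1$ correction: one must show that the cokernel of $P^s \hookrightarrow I^j + J^{s-j+1}$ has strictly smaller regularity, so that the Künneth shift survives inside $\reg P^s$. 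This can be done either by analyzing the primary decomposition of $(I^j + J^{s-j+1})/P^s$ or by producing extremal local cohomology classes explicitly as tensor products of extremal classes on each side. Boundary indices where one factor would degenerate to $R$ or $S$ require separate treatment and are, correctly, excluded from the ranges $[1, s-1]$ and $[1, s]$.
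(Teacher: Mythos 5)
First, note that the paper does not prove this statement at all: it is quoted from \cite[Theorem 1.1]{NV1} ``for ease of reference,'' so there is no in-paper proof to match your sketch against; I compare it with the argument actually given in that reference.

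Your base case and the two intersection identities are correct: $I^a\cap J^b=I^aJ^b$ and $(I\cdot P^{s-1})\cap(J\cdot P^{s-1})=IJ\cdot P^{s-2}$ do follow from disjointness of the variables, and the K\"unneth computations $\reg(I^aJ^b)=\reg I^a+\reg J^b$ and $\reg(I^a+J^b)=\reg I^a+\reg J^b-1$ are fine. The gaps are in both inequalities. For the lower bounds, ``$I^iJ^{s-i}$ is a subideal of $P^s$ obtained by selecting among minimal generators'' yields nothing: regularity is not monotone under such inclusions in either direction (e.g.\ $(x^2,y^2)\subseteq(x,y)^2$ has regularity $3$ versus $2$). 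Likewise, the $-1$ family read off from ``the connecting homomorphism'' for $P^s\subseteq I^j+J^{s-j+1}$ requires exactly the vanishing/surjectivity statement you concede at the end you have not verified; since $\reg P^s\ge\reg I^j+\reg J^{s-j+1}-1$ is the part of the theorem this paper leans on repeatedly (Lemma \ref{lem_inequality_special}, Lemma \ref{lem_4_3}), deferring it leaves the proof empty where it matters. For the upper bound, the induction does not close: $I\cdot P^{s-1}$, $J\cdot P^{s-1}$ and $IJ\cdot P^{s-2}$ are not powers of a sum of ideals in disjoint variable sets, so the inductive hypothesis on $P^{s-1}$ says nothing about them, and there is no general inequality $\reg(I\cdot M)\le\reg I+\reg M$ to fall back on, as regularity is not subadditive under products of ideals.

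The proof in \cite{NV1} avoids both problems by filtering $P^s=Q_0\supseteq Q_1\supseteq\cdots\supseteq Q_s=I^s$ with $Q_r=\sum_{i\ge r}I^iJ^{s-i}$; disjointness gives $Q_r/Q_{r+1}\cong(I^r/I^{r+1})\otimes_\k J^{s-r}$, whose regularity is computed by K\"unneth, and the essential technical lemma is that the inclusions $Q_{r+1}\hookrightarrow Q_r$ induce injections on $\mathrm{Tor}_\bullet(\k,-)$, so the long exact sequences split. That single injectivity statement delivers the upper bound and the attainment of every term of the maximum simultaneously; it is the replacement for both missing steps in your outline, and proving it (or the equivalent vanishing of your connecting maps) is the irreducible core of the theorem.
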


\subsection{Integral closure of monomial ideals}
\begin{defn} Let $I$ be a monomial ideal of $S$. The exponent set of $I$ is $E(I) = \{\a \in \NN^n | x^\a \in I\}$. The Newton polyhedron of $I$ is 
$$\NP(I) = \conv (\a \in \NN^n | x^\a \in  I ),$$
the convex hull of the exponent set of $I$ in $\RR^n$.
\end{defn}
The following characterization of the integral closure of $I$ in term of the Newton polyhedron of $I$ is well-known (see e.g. \cite{E}).
\begin{lem}\label{lem_NP} Let $I$ be a monomial ideal of $S$. Then the integral closure of $I$ is a monomial ideal with the exponent
set $E(\overline {I}) = \NP(I) \cap \ZZ^r.$
\end{lem}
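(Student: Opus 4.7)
The plan is to prove Lemma \ref{lem_NP} in three steps: show that $\overline{I}$ is a monomial ideal, reduce to a clean single‑monomial characterization, and then translate that characterization to the Newton polyhedron.

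First I would verify that $\overline{I}$ is multigraded. Suppose $f \in \overline{I}$ satisfies a monic integral equation $f^N + a_1 f^{N-1} + \cdots + a_N = 0$ with $a_i \in I^i$. Since $I$ is monomial, each $I^i$ is $\ZZ^n$‑graded, so I would extract the multi‑homogeneous component of this equation in each multidegree. This forces each monomial summand $x^\a$ of $f$ to satisfy its own monic integral relation over $I$, so $\overline{I}$ is generated by monomials, and the problem reduces to deciding when a single monomial belongs to $\overline{I}$.

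The key step is then the equivalence $x^\a \in \overline{I} \iff x^{k\a} \in I^k$ for some $k \ge 1$. The backward direction is immediate because $y^k - x^{k\a}$ is a monic integral equation for $y = x^\a$ with $x^{k\a} \in I^k$. For the forward direction, I would take the multi‑homogeneous component of multidegree $N\a$ in the integral relation for $x^\a$; in the polynomial ring, the only terms surviving must match $x^{N\a}$ on the nose, producing a relation that forces $x^{N\a} \in I^N$. Writing $G(I)$ for the exponents of the minimal monomial generators of $I$, the condition $x^{k\a} \in I^k$ is the same as a decomposition $k\a = \b_1 + \cdots + \b_k + \c$ with $\b_j \in G(I)$ and $\c \in \NN^n$. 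Dividing by $k$ places $\a$ in $\conv(G(I)) + \RR^n_{\ge 0} = \NP(I)$, giving the inclusion $E(\overline{I}) \subseteq \NP(I) \cap \ZZ^n$.

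For the reverse inclusion, given $\a \in \NP(I) \cap \ZZ^n$, the feasibility system $\a = \sum_j \lambda_j \b_j + \c$ with $\sum_j \lambda_j = 1$, $\lambda_j \ge 0$, and $\c \in \RR^n_{\ge 0}$ has a real solution and entirely rational data; hence the rationality of vertices of rational polyhedra (equivalently, linear programming) supplies a rational solution. Clearing a common denominator $k$ gives $k\a = \sum_j (k\lambda_j)\, \b_j + k\c$ with non‑negative integer coefficients summing to $k$, i.e.\ a sum of $k$ elements of $G(I)$ plus an element of $\NN^n$, so $x^{k\a} \in I^k$ and therefore $x^\a \in \overline{I}$.

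The main obstacle I anticipate is this last rationality step: one must argue carefully that an integer point of $\NP(I)$ admits a rational convex decomposition into the integer generators, which ultimately rests on $\NP(I)$ being a rational polyhedron (clear from its rational defining data). Apart from this, the proof is essentially bookkeeping with the $\ZZ^n$‑grading and the standard characterization of integral dependence for monomials.
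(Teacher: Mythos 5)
Your overall strategy (reduce to monomials, characterize $x^\a\in\overline{I}$ by $x^{k\a}\in I^k$ for some $k\ge 1$, then translate to $\NP(I)$ via a rational convex decomposition) is the standard textbook route for this well-known fact, which the paper itself only cites; your single-monomial step and the polyhedral translation in both directions, including the rationality argument, are correct. (One harmless imprecision: extracting the multidegree-$N\a$ component of the integral equation for a monomial $x^\a$ yields $\bigl(1+\sum_i c_i\bigr)x^{N\a}=0$ with $c_i x^{i\a}\in I^i$, hence $x^{i\a}\in I^i$ for \emph{some} $i\le N$, not necessarily $x^{N\a}\in I^N$; that is still enough for your ``for some $k$'' criterion.)

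The genuine gap is in your first step, the monomiality of $\overline{I}$. Extracting multihomogeneous components of $f^N+a_1f^{N-1}+\cdots+a_N=0$ does \emph{not} produce a monic integral relation for each monomial summand of $f$: the multidegree-$\mathbf{d}$ component of $f^{N-i}$ is a sum over all factorizations of $\mathbf{d}$ into exponents of $f$, so it entangles the different monomials of $f$ and is not of the form $(\text{element of }I^i)\cdot (x^\a)^{N-i}$. Even in the most favorable case, taking $\mathbf{d}=N\a$ with $\a$ a vertex of the Newton polytope of $f$ (so that $(f^N)_{N\a}=c_\a^N x^{N\a}$), the component equation only says $c_\a^N x^{N\a}=-\sum_i(a_if^{N-i})_{N\a}$ with each term in $I^i$, i.e.\ $x^{N\a}\in I^{i_0}$ for some unknown $i_0\le N$; this is strictly weaker than integrality of $x^\a$ over $I$ (compare $I=(x^3)$, where $x^{3}\in I^1$ but $x\notin\overline{I}$). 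So the multigrading alone does not close this step. A correct argument needs a different idea: either the torus action (for $\k$ infinite, $\overline{I}$ is stable under $x_i\mapsto \lambda_ix_i$, hence monomial, with a base-change reduction for finite $\k$), or the valuative criterion with monomial valuations, or the more delicate induction on the number of terms of $f$ carried out in Herzog--Hibi, \emph{Monomial Ideals}, Theorem 1.4.2. Everything after this step goes through once monomiality is established.
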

\begin{defn}A monomial ideal $I$ of $S$ is said to be normal if $\overline{I^s} = I^s$ for all $s \ge 1$.
\end{defn}

For a monomial $f$ in $S$, the support of $f$, denoted $\supp(f)$, is the set of all indices $i \in [n]$ such that $x_i|f$. For a monomial ideal $J$ of $S$ and a subset $V$ of $[n]$, the restriction of $J$ to $V$, denoted $J_V$ is:
$$J_V = (f \mid f \text{ is a minimal generator of J} \text{ such that } \supp f \subseteq V).$$
We have

\begin{cor}\label{restriction} Let $I$ be a monomial ideal. Then 
$$\overline{(I_V)} = (\overline{I})_V.$$
\end{cor}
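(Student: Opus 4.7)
The approach is to translate the equality into a comparison of the two Newton polyhedra $\NP(I_V)$ and $\NP(I)$ via Lemma \ref{lem_NP}, which identifies $\overline{J}$ with the monomial ideal whose exponent set is $\NP(J) \cap \NN^n$. Recall that $\NP(J) = \conv(\a : x^\a \text{ is a minimal generator of } J) + \RR_{\ge 0}^n$, so the polyhedron is determined by the minimal generators. Since both $\overline{I_V}$ and $(\overline{I})_V$ will turn out to be generated by monomials supported on $V$, it suffices to show that for every $\a \in \NN^n$ with $\supp(\a) \subseteq V$, we have $\a \in \NP(I_V)$ if and only if $\a \in \NP(I)$.

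The key polyhedral observation is that convex combinations in $\NP(I)$ producing a point supported on $V$ can only use generators already supported on $V$. Concretely, given $\a \in \NP(I)$ with $\supp(\a) \subseteq V$, write $\a = \sum_i \lambda_i \a_i + \b$ with $\sum_i \lambda_i = 1$, $\lambda_i \ge 0$, $\b \in \RR_{\ge 0}^n$, and the $\a_i$ the exponents of the minimal generators of $I$. For each $j \notin V$, the relation $0 = a_j = \sum_i \lambda_i (\a_i)_j + b_j$ forces $b_j = 0$ and $(\a_i)_j = 0$ whenever $\lambda_i > 0$. Thus the only $\a_i$ contributing nontrivially are exponents of generators of $I_V$, and the same convex combination realizes $\a$ as a point of $\NP(I_V)$. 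The reverse containment $\NP(I_V) \subseteq \NP(I)$ is automatic from $I_V \subseteq I$.

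The remaining bookkeeping justifies restricting attention to $V$-supported monomials on both sides. For $(\overline{I})_V$ this is immediate from the definition of restriction, together with the fact that a minimal generator of $\overline{I}$ dividing a monomial supported on $V$ must itself be supported on $V$. For $\overline{I_V}$, the same support analysis as above shows that any $\a \in \NP(I_V) \cap \NN^n$ with $a_j > 0$ for some $j \notin V$ also satisfies $\a - \e_j \in \NP(I_V)$, so such $\a$ cannot be a minimal generator; hence the minimal generators of $\overline{I_V}$ are supported on $V$. I anticipate no serious obstacle here — the whole argument is essentially one pass of linear bookkeeping — but one must be careful not to conflate the lattice points of $\NP(J)$ with the exponents of minimal generators of $\overline{J}$, and to verify that in the key convex decomposition every generator with positive weight genuinely respects the support constraint imposed by $V$.
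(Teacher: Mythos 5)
Your proof is correct and follows the same route as the paper, which simply says ``Follows from Lemma \ref{lem_NP}''; you have filled in the polyhedral details (the observation that a convex combination realizing a $V$-supported exponent can only involve generators supported on $V$, plus the bookkeeping that minimal generators on both sides are $V$-supported), and all of these steps check out.
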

\begin{proof} Follows from Lemma \ref{lem_NP}.
\end{proof}

\begin{cor}\label{restriction_intermediate} Let $I$ be a monomial ideal. Let $J \in \Inter(I^s,\overline{I^s})$ be an intermediate ideal. Let $V \subseteq [n]$ be a subset. Then $J_V \in \Inter(I_V^s, \overline{I_V^s}).$
\end{cor}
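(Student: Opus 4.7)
The plan is to unpack the intermediate-ideal description of $J$, restrict to $V$ generator-by-generator, and match the result with the intermediate structure between $I_V^s$ and $\overline{I_V^s}$.

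First I would write $J = I^s + (f_1, \ldots, f_t)$, where each $f_i$ is among the minimal monomial generators of $\overline{I^s}$. Next I would verify the identity $(I^s)_V = (I_V)^s$: if $g$ is a minimal generator of $I^s$ with $\supp g \subseteq V$, then $g = g_1 \cdots g_s$ for some minimal generators $g_j$ of $I$, and since $\supp g = \bigcup_j \supp g_j$ we get $\supp g_j \subseteq V$ for every $j$; thus each $g_j$ lies in $I_V$, so $g \in (I_V)^s$. The reverse containment is immediate. Combining this identity with Corollary \ref{restriction} applied to the monomial ideal $I^s$ gives
$$\overline{(I_V)^s} = \overline{(I^s)_V} = (\overline{I^s})_V.$$

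The core step is then the identification
$$J_V = (I_V)^s + (f_i : \supp f_i \subseteq V).$$
The inclusion from right to left is clear: the generators of $(I_V)^s = (I^s)_V$ are minimal generators of $I^s$ with support in $V$ (hence lie in $J$ and in $J_V$), and each $f_i$ with $\supp f_i \subseteq V$ is a minimal generator of $J$ with support in $V$. For the reverse inclusion, any minimal generator of $J$ whose support lies in $V$ is either some $f_i$ (necessarily with $\supp f_i \subseteq V$) or a minimal generator of $I^s$ with support in $V$, in which case it already belongs to $(I_V)^s$. Finally, since the $f_i$ are minimal generators of $\overline{I^s}$, any $f_i$ with $\supp f_i \subseteq V$ is a minimal generator of $(\overline{I^s})_V = \overline{(I_V)^s}$, which yields $J_V \in \Inter(I_V^s, \overline{I_V^s})$.

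There is no real obstacle beyond careful bookkeeping. The only delicate point is ensuring that the $f_i$ whose supports leave $V$ do not accidentally contribute generators to $J_V$ via multiples with support in $V$; this is ruled out because such a multiple would be divisible by some minimal generator of $J$ whose support is contained in $V$, which is already accounted for on the right-hand side. Each piece of the argument reduces to the elementary observation that the support of a monomial product equals the union of the supports of its factors.
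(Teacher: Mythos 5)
Your proof is correct and follows the same route as the paper, which simply cites the definition of restriction together with Corollary \ref{restriction}; you have just written out the bookkeeping (the identity $(I^s)_V=(I_V)^s$ and the matching of minimal generators) that the paper leaves implicit. The worry in your final paragraph is moot anyway: $J_V$ is by definition generated only by \emph{minimal} generators of $J$ with support in $V$, and any multiple of an $f_i$ with $\supp f_i \not\subseteq V$ has support containing $\supp f_i$, so it can never have support inside $V$.
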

\begin{proof}Follows from the definition and Corollary \ref{restriction}.
\end{proof}

For a monomial $f$ of $S$ and $i \in [n]$, $\deg_i(f) = \max (t \mid x_i^t \text{ divides } f)$ denotes the degree of $x_i$ in $f$. For a monomial ideal $I$, the degree of $x_i$ in $I$ is defined by 
$$\rho_i(I)=\max\{\deg_{i}(u) \mid u\text{ is a minimal monomial generator of } I\}.$$
We have the following property of minimal generators of integral closure. 
\begin{lem}\label{partial_degree} Let $I$ be a monomial ideal. Then $\rho_j(\overline{I}) \le \rho_j(I),$ for all $j = 1, \ldots, n$. In particular, if $I$ is squarefree, then $\rho_j(\overline{I^s}) \le s$ for all $j = 1, \ldots, n$.
\end{lem}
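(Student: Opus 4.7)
The plan is to use Lemma \ref{lem_NP} and argue by contradiction on the Newton polyhedron. The basic observation is that $\NP(I)$ admits the Minkowski-sum description $\NP(I) = \conv(\a_1, \ldots, \a_r) + \RR^n_{\ge 0}$, where $\a_1, \ldots, \a_r$ are the exponents of the minimal monomial generators of $I$, since $\NP(I)$ is upward closed with respect to the coordinate ordering.

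So the first step is to suppose $x^\b$ is a minimal monomial generator of $\overline{I}$, i.e., $\b \in \NP(I) \cap \ZZ^n$ is a minimal element of that set (in the componentwise order), and I want to show $b_j \le \rho_j(I)$ for each $j$. I would write $\b = \sum_{k=1}^r \lambda_k \a_k + \c$ with $\lambda_k \ge 0$, $\sum \lambda_k = 1$, and $\c \in \RR^n_{\ge 0}$.

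The key step is a short integrality argument: suppose for contradiction $b_j > \rho_j(I)$. Then, since $(\a_k)_j \le \rho_j(I)$ for every $k$,
\[
c_j \;=\; b_j - \sum_k \lambda_k (\a_k)_j \;\ge\; b_j - \rho_j(I) \sum_k \lambda_k \;=\; b_j - \rho_j(I).
\]
Because $b_j$ and $\rho_j(I)$ are integers with $b_j > \rho_j(I)$, we get $c_j \ge 1$. Hence $\c - \e_j \in \RR^n_{\ge 0}$, and
\[
\b - \e_j \;=\; \sum_k \lambda_k \a_k + (\c - \e_j) \;\in\; \NP(I) \cap \ZZ^n.
\]
By Lemma \ref{lem_NP}, $x^{\b-\e_j} \in \overline{I}$, contradicting the minimality of $x^\b$ as a generator of $\overline{I}$.

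For the ``in particular'' part, I would note that if $I$ is squarefree then every minimal generator of $I^s$ is a product of $s$ squarefree monomials (possibly with repetition among the generators of $I$), so its degree in $x_j$ is at most $s$. Thus $\rho_j(I^s) \le s$, and the first part applied to $I^s$ gives $\rho_j(\overline{I^s}) \le s$. There is no real obstacle here; the only nontrivial point is the brief integrality step forcing $c_j \ge 1$, and everything else is a direct translation through the Newton-polyhedron description of the integral closure.
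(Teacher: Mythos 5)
Your proof is correct and follows essentially the same route as the paper: both express a minimal generator's exponent via Lemma \ref{lem_NP} as a convex combination of generator exponents (plus a nonnegative part), bound the $j$-th coordinate of that combination by $\rho_j(I)$, and invoke integrality together with minimality of the generator to conclude. The only cosmetic difference is that you phrase the integrality step as a contradiction (subtracting $\e_j$ stays in the Newton polyhedron), whereas the paper writes $a_j=\lceil\sum_i c_i b_{ij}\rceil\le\rho_j(I)$ directly.
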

\begin{proof}Let $x^\a$ be a minimal generator of $\overline{I}$. By Lemma \ref{lem_NP}, there exists non-negative real numbers $c_1, ..., c_m$ with $\sum_{i=1}^m c_i = 1$ and minimal generators $x^{\b_1}, ..., x^{\b_m}$ of $I$ such that 
$$a_j \ge \sum_{i=1}^m c_i b_{ij}$$
for all $j$. Since $x^\a$ is minimal, this implies that $a_j = \lceil \sum_{i=1}^m c_i b_{ij} \rceil$ for all $j$. Since $b_{ij} \le \rho_j(I)$ and $\sum_{i=1}^m c_i = 1$, this implies that $\sum_{i=1}^m c_i b_{ij} \le \rho_j(I)$. Thus $a_j \le \rho_j(I)$. The conclusion follows.
\end{proof}

We now describe the minimal generators of the integral closure of powers of edge ideals of simple graphs. 
     
\begin{thm}\label{expansion_edge} Let $I = I(G)$ be the edge ideal of a simple graph $G$. Let $f$ be a minimal generator of $\overline{I(G)^s}$. Then there exist odd cycles $C_1, ..., C_{2a}$, and edges $e_1, ...,e_b$ of $G$ such that
$$f = C_1 \cdots C_{2a} \cdot e_1 \cdots e_b,$$
where $(|C_1| + \cdots + |C_{2a}|)/2 + b = s.$ Furthermore, the odd cycles $C_i$ can be chosen so that $N(C_i) \cap C_j = \emptyset$ for all $i \neq j \in \{1, ..., 2a\}$.
\end{thm}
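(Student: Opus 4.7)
The plan is to translate membership in $\overline{I(G)^s}$ into a fractional edge-covering LP on $G$, extract a half-integral structure via Edmonds' theorem on the $b$-matching polytope, and perform a combinatorial swap to secure the neighbor-disjointness. By Lemma \ref{lem_NP}, $f = x^\a$ lies in $\overline{I(G)^s}$ exactly when there exist nonnegative reals $\lambda_e$ on the edges of $G$ with $\sum_e \lambda_e = s$ and $\sum_{e \ni v}\lambda_e \le a_v$ for every vertex $v$; call the polytope of such vectors $P(\a, s)$. Since $f$ is a minimal generator, at each $v \in \supp(\a)$ one has $a_v = \lceil \sum_{e\ni v}\lambda_e\rceil$, for otherwise $\a - \e_v$ would still lie in the Newton polyhedron. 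I then select an extreme point $\lambda^*$ of $P(\a, s)$; by Edmonds' half-integrality theorem for the $b$-matching polytope, suitably adapted to absorb the sum constraint $\sum_e \lambda_e = s$, each $2\lambda^*_e$ is an integer and the edges on which $\lambda^*_e$ has nonzero fractional part (necessarily $1/2$) decompose into vertex-disjoint odd cycles $C_1, \ldots, C_{2a}$.

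Writing $\lambda^*_e = n_e + \tfrac12 r_e$ with $n_e \in \NN$ and $r_e \in \{0,1\}$, I expand the integer contributions into a multiset of edges $e_1, \ldots, e_b$ in which edge $e$ appears $n_e$ times. The sum identity $\sum_e \lambda^*_e = s$ becomes $b + (|C_1| + \cdots + |C_{2a}|)/2 = s$; since $s$ is an integer and each $|C_i|$ is odd, the number of odd cycles is automatically even, justifying the label $2a$. Vertex-disjointness of the $C_i$ makes $\sum_{e\ni v}\lambda^*_e = \#\{j : v \in e_j\} + \#\{i : v \in C_i\}$ an integer, which must coincide with $a_v$ by the minimality observation. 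Consequently, $\a = \sum_j \chi_{e_j} + \sum_i \chi_{V(C_i)}$, yielding the factorization $f = C_1 \cdots C_{2a} \cdot e_1 \cdots e_b$.

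For the neighbor-disjointness $N(C_i) \cap C_j = \emptyset$, I use the following combinatorial swap. If an edge $\{u, w\} \in E(G)$ connects $u \in C_i$ to $w \in C_j$ for some $i \ne j$, I replace the pair $C_i, C_j$ in the factorization by a near-perfect matching of $C_i$ missing $u$ (which has $(|C_i|-1)/2$ edges), the bridge $\{u, w\}$, and a near-perfect matching of $C_j$ missing $w$ (which has $(|C_j|-1)/2$ edges). The resulting multiset produces exponent vector $\chi_{V(C_i)} + \chi_{V(C_j)}$ and contributes $(|C_i|+|C_j|)/2$ to the total count, matching exactly the contribution of the two cycles, so $f$ and $s$ are preserved while the cycle count drops by two. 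Iterating terminates with a decomposition of the required form. The main obstacle is the invocation of Edmonds' theorem on $P(\a, s)$ rather than on the unrestricted $b$-matching polytope: one must check that appending the all-ones row for $\sum_e \lambda_e = s$ to the vertex-edge incidence matrix of $G$ preserves the property that all subdeterminants are bounded by $2$ in absolute value, which yields the half-integrality of the extreme points.
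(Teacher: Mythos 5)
Your route is genuinely different from the paper's, which simply reads off the degree-$s$ graded piece of Villarreal's description of $\overline{\mathcal{R}(It)}$ (\cite[Proposition 8.7.19]{V}); a self-contained polyhedral proof would be a nice addition. The translation via Lemma \ref{lem_NP} into the fractional $b$-matching polytope, the ceiling observation from minimality, and the near-perfect-matching swap that enforces $N(C_i)\cap C_j=\emptyset$ are all correct. But the central step --- half-integrality of an extreme point of $P(\a,s)$ with fractional support a disjoint union of odd cycles --- has a genuine gap, and the repair you propose does not work. First, the premise is already false for the unrestricted system: the vertex--edge incidence matrix of a graph containing two vertex-disjoint triangles has a $6\times 6$ submatrix of determinant $\pm 4$ (each triangle block contributes $\pm 2$), so ``all subdeterminants bounded by $2$'' fails before you even append the all-ones row; Balinski-type half-integrality is not proved via bounded subdeterminants but by analyzing the support graph of a basic solution (tree and odd-unicyclic components). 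Second, and more seriously, intersecting a polytope with half-integral vertices with the hyperplane $\sum_e\lambda_e=s$ does \emph{not} in general preserve half-integrality: the new vertices are points where edges of the old polytope cross the hyperplane, and a convex combination $t u+(1-t)w$ of two half-integral vertices with $t$ determined by $\sum_e u_e$ and $\sum_e w_e$ need not be half-integral. So ``an extreme point of $P(\a,s)$'' cannot be fed to Edmonds/Balinski as stated.

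The gap is fixable, but it needs the minimality of $f$ at exactly this point. Let $\nu^*(\a)$ be the maximum of $\sum_e\lambda_e$ over the fractional $b$-matching polytope $Q(\a)=\{\lambda\ge 0:\sum_{e\ni v}\lambda_e\le a_v\}$. Claim: minimality forces $\nu^*(\a)=s$. Indeed, take an optimal \emph{vertex} of $Q(\a)$, which is half-integral with fractional support a disjoint union of odd cycles (this is the statement you should prove or cite for general integer capacities, not just $b=\mathbf 1$). If $\nu^*(\a)\ge s+\tfrac12$ and some odd cycle $C$ occurs, replace its half-values by a near-perfect matching of $C$ missing a vertex $u$: the value drops by only $\tfrac12$, stays $\ge s$, and the degree at $u$ drops by $1$, exhibiting $\a-\e_u\in\NP(I^s)$, contradicting minimality; if no odd cycle occurs the optimum is integral of value $\ge s+1$ and deleting one edge gives the same contradiction. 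Hence $\nu^*(\a)=s$, so $P(\a,s)$ is precisely the optimal face of $Q(\a)$ for the functional $\sum_e\lambda_e$, and every extreme point of $P(\a,s)$ is an extreme point of $Q(\a)$. With that substitution your argument goes through and the rest of the write-up (parity of the number of cycles, the exact identity $\a=\sum_j\chi_{e_j}+\sum_i\chi_{V(C_i)}$, and the bridge swap) is fine.
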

\begin{proof} For each pair $C_1, C_2$ of induced disjoint odd cycles, denote $m_{C_1,C_2} = x_{C_1} \cdot x_{C_2}$. Denote $P$ the set of all pairs of disjoint odd cycles in $G$. By \cite[Proposition 8.7.19]{V}, we have 
$$\overline{\mathcal{R}(It)} = S[It,m_{C_i,C_j}t^{m_{i,j} /2} \mid (C_i,C_j) \in P],$$
where $m_{i,j} = \deg m_{C_i,C_j}$. The conclusion follows by taking the graded component of degree $s$ and a remark that if $N(C_i) \cap C_j \neq \emptyset$, then $x_{C_i}\cdot x_{C_j} \in I^{u}$ with $u= m_{i,j}/2.$
\end{proof}

\subsection{Symbolic powers} 
Let $I$ be a non-zero and proper homogeneous ideal of $S$. Let $\{P_1,\ldots,P_r\}$ be the set of the minimal prime ideals of $I$. Given a positive integer $s$, the $s$-th symbolic power of $I$ is defined by
$$I^{(s)}=\bigcap_{i=1}^r I^sS_{P_i}\cap S.$$

\subsection{Degree complexes}
For a monomial ideal $I$ in $S$, Takayama in \cite{T} found a combinatorial formula for $\dim_\k H_\m^i(S/I)_\a$ for all $\a\in\ZZ^n$ in terms of certain simplicial complexes which are called degree complexes. For every $\a = (a_1,\ldots, a_n) \in \ZZ^n$ we set $G_\a = \{i\mid \ a_i < 0\}$ and write $x^{\a} = \Pi_{j=1}^n x_j^{a_j}$. Thus, $G_\a =\emptyset$ whenever $\a \in \NN^n$. The degree complex $\D_\a(I)$ is the simplicial complex whose faces are $F \setminus G_\a$, where $G_\a\subseteq F\subseteq [n]$, so that for every minimal generator $x^\b$ of $I$ there exists an index $i \not\in F$ with $a_i < b_i$. It is noted that $\D_\a(I)$ may be either the empty complex or $\{\emptyset\}$ and its vertex set may be a proper subset of $[n]$. The next lemma is useful to compute the regularity of a monomial ideal in terms of its degree complexes.

\begin{lem}\label{Key0}
Let $I$ be a monomial ideal in $S$. Then
\begin{multline*}
\reg(S/I)=\max\{|\a|+i~|~\a\in\NN^n,i\ge 0,\h_{i-1}(\lk_{\D_\a(I)}F;\k)\ne 0\\ \text{ for some $F\in \D_\a(I)$ with $F\cap \supp \a=\emptyset$}\}.
\end{multline*}
In particular, if $I=I_\D$ is the Stanley-Reisner ideal of a simplicial complex $\D$ then
$\reg(\k[\D])=\max\{i \mid \h_{i-1}(\lk_{\D}F;\k)\ne 0\text{ for some }F\in \D\}$.
\end{lem}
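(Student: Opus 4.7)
The plan is to combine the standard $\ZZ^n$-graded formula
$$\reg(S/I) = \max\{|\tilde\a| + i : H^i_\m(S/I)_{\tilde\a} \neq 0,\ \tilde\a \in \ZZ^n,\ i \ge 0\}$$
with Takayama's combinatorial description of the graded pieces of local cohomology. Takayama's formula says that $H^i_\m(S/I)_{\tilde\a}$ vanishes unless $G_{\tilde\a} \in \D(I)$ and $\tilde a_j \le \rho_j(I) - 1$ for all $j \notin G_{\tilde\a}$, in which case its dimension equals that of $\h_{i-|G_{\tilde\a}|-1}(\D_{\tilde\a}(I); \k)$. This reduces the regularity to an extremal problem about reduced homologies of degree complexes indexed by $\tilde\a \in \ZZ^n$.

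The next step is to reindex by $\NN^n$. Given $\tilde\a \in \ZZ^n$, let $F := G_{\tilde\a}$ and let $\a := \tilde\a^+ \in \NN^n$ be the positive part. Unwinding the definition, a set $F' \setminus F$ (with $F \subseteq F'$) belongs to $\D_{\tilde\a}(I)$ iff every minimal generator $x^\b$ of $I$ has some $i \notin F'$ with $\tilde a_i < b_i$; since this condition only sees $\tilde a_i$ for $i \notin F$, it coincides with the condition defining $F' \in \D_\a(I)$. One therefore obtains the identity
$$\D_{\tilde\a}(I) = \lk_{\D_\a(I)}(F),$$
valid when $F \in \D_\a(I)$, with $F \cap \supp \a = \emptyset$ automatic from the definitions of $F$ and $\a$.

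Now fix $\a$ and $F$ and optimize over $\tilde\a$ with $\tilde\a^+ = \a$ and $G_{\tilde\a} = F$. The sum
$$|\tilde\a| + i = |\a| + \sum_{j \in F} \tilde a_j + i$$
is maximized (with $\tilde a_j \le -1$ for $j \in F$) by choosing $\tilde a_j = -1$, giving $|\a| + (i - |F|)$. Setting $k := i - |F|$ and substituting $\h_{i-|F|-1}(\D_{\tilde\a}(I)) = \h_{k-1}(\lk_{\D_\a(I)} F)$, the maximum collapses to the stated formula after renaming $k$ as $i$. The squarefree statement is then immediate: for $I = I_\D$ all $\rho_j(I) = 1$, so Takayama's condition forces $\a = 0$, and since $\D_0(I_\D) = \D$, one obtains $\reg \k[\D] = \max\{i : \h_{i-1}(\lk_\D F; \k) \neq 0,\ F \in \D\}$.

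The one delicate point is the faithful combinatorial translation $\D_{\tilde\a}(I) = \lk_{\D_\a(I)}(F)$, including verifying that the parity of Takayama's vanishing hypotheses is preserved on both sides; once this identification is in place, the rest is routine bookkeeping of the cohomological degree shift and the optimal choice of negative entries. Since the lemma is cited from \cite{MNPTV}, I expect the full argument there to follow essentially this outline.
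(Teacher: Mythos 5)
Your proposal is correct and follows exactly the route of the cited proof: the paper states this lemma without proof, deferring to \cite{MNPTV}, where the argument is precisely the combination of the $\ZZ^n$-graded characterization of regularity with Takayama's formula and the identification $\D_{\tilde\a}(I)=\lk_{\D_\a(I)}(G_{\tilde\a})$ for $\a=\tilde\a^+$, with the negative entries normalized to $-1$. Your verification of that identification and of the degree shift $i\mapsto i-|F|$ is the whole content of the proof, so nothing is missing.
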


\begin{rem}\label{rem_T} Let $I$ be a monomial ideal in $S$ and a vector $\a\in \NN^n$. In the proof of \cite[Theorem 1]{T}, the author showed that if there exists $j\in [n]$ such that $a_j\ge \rho_j(I)$ then $\D_\a(I)$ is either a cone over $\{j\}$ or the void complex. 
\end{rem}

\begin{defn}\label{exdef} Let $I$ be a monomial ideal in $S$. A pair $(\a,i) \in \NN^n\times\NN$ is called {\it an extremal exponent of $I$} if $\reg(S/I) = |\a| + i$ as in Lemma \ref{Key0}. 
\end{defn}

\begin{rem}\label{rem_extremal_set} We sometime call $\a$ instead of $(\a,i)$ an extremal exponent of $I$. Let $\a$ be an extremal exponent of $I$. Then $x^\a \notin I$ and $\Delta_{\a}(I)$ is not a cone over $t$ with $t\in\supp\a$. In particular, by Remark \ref{rem_T}, $\a$ belongs to the finite set
	$$\Gamma(I)=\{\a\in\NN^n~|~ a_j<\rho_j\text{ for all } j=1,\ldots,n\}.$$ 
\end{rem}

Furthermore, we have the following interpretation of the degree complex $\Delta_\a(I)$.

\begin{lem}\label{Key1}
Let $I$ be a monomial ideal in $S$ and $\a\in\NN^n$. Then
$$I_{\Delta_{\a}(I)}=\sqrt{I : x^\a}.$$
\end{lem}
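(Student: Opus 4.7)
The plan is to reduce the identity to a combinatorial comparison of which squarefree monomials $x_F$ lie in each side. Both ideals are squarefree: the right-hand side is a radical of a monomial ideal, and the left-hand side is a Stanley-Reisner ideal by construction. So it suffices to show, for every subset $F \subseteq [n]$, that $x_F \in \sqrt{I:x^\a}$ if and only if $F \notin \Delta_\a(I)$.

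First I would unpack the right-hand side. Because $\a \in \NN^n$, the containment $x_F \in \sqrt{I:x^\a}$ is equivalent to $x_F^k \cdot x^\a \in I$ for some $k \ge 1$, which in turn says that some minimal monomial generator $x^\b$ of $I$ divides $x_F^k \cdot x^\a$. For $i \in F$ the exponent on $x_i$ in $x_F^k \cdot x^\a$ can be made arbitrarily large by choosing $k$, while for $i \notin F$ the exponent is exactly $a_i$. Hence this divisibility condition reduces cleanly to: there exists a minimal generator $x^\b$ of $I$ with $b_i \le a_i$ for all $i \in [n] \setminus F$.

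Next I would unpack the left-hand side. Since $G_\a = \emptyset$, the degree complex $\Delta_\a(I)$ consists of those $F \subseteq [n]$ such that for every minimal generator $x^\b$ of $I$, there exists $i \notin F$ with $a_i < b_i$. Negating, $F \notin \Delta_\a(I)$ if and only if there exists a minimal generator $x^\b$ with $b_i \le a_i$ for all $i \notin F$. This is the same condition obtained from the right-hand side, so the two ideals share the same squarefree generators and hence coincide.

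The only delicate part is the equivalence between $x_F^k \cdot x^\a \in I$ for some $k$ and the existence of a minimal generator $x^\b$ satisfying $b_i \le a_i$ for all $i \notin F$; this requires taking $k$ at least $\max_i \rho_i(I)$ (or any bound on the $b_i$ for $i \in F$) to absorb all excess on the $F$-variables, but since minimal generators are finite in number, such a $k$ always exists. I expect no further obstruction.
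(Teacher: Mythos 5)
Your proof is correct and is essentially the standard argument for this lemma (which the paper does not prove itself but cites from \cite{MNPTV}): both sides are squarefree monomial ideals, and unwinding membership of $x_F$ in $\sqrt{I:x^\a}$ via a minimal generator $x^\b$ with $b_i \le a_i$ for all $i \notin F$ matches exactly the negation of the defining condition for $F \in \Delta_\a(I)$. The handling of the exponent $k$ to absorb the $F$-variables is the right (and only) delicate point, and you address it adequately.
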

\begin{rem}\label{rem_mingens_degree_complex} Let $\a$ be an extremal exponent of $I$. By Lemma \ref{cone}, Lemma \ref{Key1}, and Remark \ref{rem_extremal_set}, for each $t \in \supp \a$, there exists a minimal generator $f$ of $\sqrt{I:x^\a}$ such that $x_t | f$.
\end{rem}

 We have the following inequality on the regularity of restriction of a monomial ideal.

\begin{lem}\label{restriction_in} Let $I$ be a monomial ideal and $x$ be a variable. Then 
$$\reg (I,x) \le \reg I.$$
\end{lem}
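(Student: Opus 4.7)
The plan is to prove the inequality by comparing the degree complexes of $(I, x)$ and $I$ directly via Takayama's formula (Lemma \ref{Key0}). Set $x = x_j$.

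First, I would determine $\Delta_\b((I, x_j))$ for $\b \in \NN^n$. Because $x_j$ is itself a minimal generator of $(I, x_j)$, its presence in the face-defining condition forces any face $F$ to exclude $j$, and also forces $b_j = 0$; equivalently one sees this from Lemma \ref{Key1} together with $\Delta((x_j)) = \{F : j \notin F\}$ and Lemma \ref{cone}. Thus when $b_j \ge 1$, the complex $\Delta_\b((I, x_j))$ is void and contributes nothing to the regularity via Lemma \ref{Key0}; when $b_j = 0$, one has $\Delta_\b((I, x_j)) = \{F \in \Delta_\b(I) : j \notin F\}$, i.e.\ the deletion of the vertex $j$ from $\Delta_\b(I)$.

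Next, let $(\b, i)$ together with a face $F$ with $F \cap \supp \b = \emptyset$ realize $\reg(S/(I, x_j))$ as in Lemma \ref{Key0}, so $\tilde H_{i-1}(\lk_{\Delta_\b((I, x_j))} F; \k) \ne 0$ and $b_j = 0$ (in particular $j \notin \supp \b$). Writing $\Gamma = \lk_{\Delta_\b(I)} F$, a direct calculation gives $\lk_{\Delta_\b((I, x_j))} F = \Gamma - j$, the deletion of $j$ from $\Gamma$. If $j$ is not a vertex of $\Gamma$, then $\Gamma - j = \Gamma$ and the same pair $(\b, i)$ together with the same face $F$ witnesses $\reg(S/I) \ge |\b| + i$ via Lemma \ref{Key0}. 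If $j \in \Gamma$, I would exploit the decomposition $\Gamma = \st_\Gamma(j) \cup (\Gamma - j)$, whose intersection is $\lk_\Gamma(j)$ and whose star factor is a cone, hence acyclic. The Mayer-Vietoris sequence then contains the exact fragment
$$\tilde H_{i-1}(\lk_\Gamma j; \k) \longrightarrow \tilde H_{i-1}(\Gamma - j; \k) \longrightarrow \tilde H_{i-1}(\Gamma; \k),$$
so nonvanishing of the middle group forces nonvanishing of $\tilde H_{i-1}(\Gamma; \k)$ or of $\tilde H_{i-1}(\lk_\Gamma j; \k) = \tilde H_{i-1}(\lk_{\Delta_\b(I)}(F \cup \{j\}); \k)$. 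In the first subcase $F$ witnesses $\reg(S/I) \ge |\b| + i$; in the second, $F \cup \{j\}$ does, and this enlarged face remains disjoint from $\supp \b$ because $b_j = 0$. Taking the maximum over $(\b, i)$ yields $\reg(S/(I, x_j)) \le \reg(S/I)$, equivalently $\reg(I, x_j) \le \reg I$.

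The only obstacle is bookkeeping: verifying the identification of $\Delta_\b((I, x_j))$ with the deletion $\Delta_\b(I) - j$, checking that deletion commutes with link in the sense needed, and setting up the Mayer-Vietoris exactness — all routine once the face-condition translation is in place. An alternative route is the short exact sequence $0 \to S/(I : x_j)(-1) \xrightarrow{x_j} S/I \to S/(I, x_j) \to 0$, which would reduce the claim to $\reg(I : x_j) \le \reg I$; but this reduction needs its own justification (again naturally through degree complexes), so the direct argument above is at least as efficient and stays within the toolkit already set up in Section \ref{sec_basic}.
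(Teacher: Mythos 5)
Your argument is correct, but it is not the paper's route: the paper does not prove Lemma \ref{restriction_in} at all, deferring instead to \cite[Corollary 4.8]{CHHKTT} and \cite[Lemma 2.14]{MNPTV} (where the standard derivation passes through the exact sequence $0 \to (S/(I:x_j))(-1) \to S/I \to S/(I,x_j) \to 0$ and the companion inequality $\reg(I:x_j) \le \reg I$). What you give is a self-contained proof inside the Takayama framework of Section \ref{sec_basic}: the identification $\Delta_\b((I,x_j)) = \Delta_\b(I) - j$ for $b_j = 0$ (void otherwise) is correct — the only bookkeeping worth recording is that the minimal generators of $(I,x_j)$ are $x_j$ together with the generators of $I$ not divisible by $x_j$, and the face condition for the discarded generators is implied by the condition for $x_j$ — and the Mayer--Vietoris step with $\Gamma = \st_\Gamma(j) \cup (\Gamma - j)$, $\st_\Gamma(j)$ acyclic, correctly forces $\h_{i-1}(\Gamma;\k) \ne 0$ or $\h_{i-1}(\lk_{\Delta_\b(I)}(F\cup\{j\});\k) \ne 0$, with $(F\cup\{j\}) \cap \supp\b = \emptyset$ since $b_j = 0$; either witness feeds back into Lemma \ref{Key0}. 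This is in fact the same cone-plus-Mayer--Vietoris pattern the paper itself uses later in the proof of Lemma \ref{lem_4_3}, so your proof has the virtue of keeping the whole development internally consistent and reference-free, at the cost of redoing an argument that the literature already packages; the citation route is shorter and also yields the stronger statement $\reg(I:x_j) \le \reg I$ as a byproduct.
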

\begin{proof}
See \cite[Corollary 4.8]{CHHKTT} or \cite[Lemma 2.14]{MNPTV}.
\end{proof}

\begin{lem}\label{restriction_inq} Let $J$ be a monomial ideal in $S$ and $V \subseteq [n]$. We have
$$\reg (J_V) \le \reg (J).$$
In particular, for a monomial ideal $I \subseteq S$ then
$$\reg \overline{I_V^s} \le \reg I^s.$$
\end{lem}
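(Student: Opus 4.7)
For the first inequality $\reg(J_V) \le \reg(J)$, the plan is to reduce to Lemma \ref{restriction_in} by iteration. Writing $[n] \setminus V = \{t_1, \ldots, t_k\}$, I would apply Lemma \ref{restriction_in} successively with $x = x_{t_1}, \ldots, x_{t_k}$ to obtain
$$\reg(J, x_{t_1}, \ldots, x_{t_k}) \le \reg J.$$
The key observation is then that adding all the variables outside $V$ absorbs every minimal generator of $J$ whose support meets $V^c$, leaving precisely the minimal generators of $J_V$, so
$$(J, x_{t_1}, \ldots, x_{t_k}) = (J_V, x_{t_1}, \ldots, x_{t_k}).$$
Since $J_V$ uses only variables in $V$, the sequence $x_{t_1}, \ldots, x_{t_k}$ is regular on $S/J_V$, so passing to the quotient by these variables does not change regularity and gives $\reg(J_V, x_{t_1}, \ldots, x_{t_k}) = \reg J_V$. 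Concatenating yields $\reg J_V \le \reg J$.

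For the ``in particular'' part, the plan is to apply the first inequality to $J = \overline{I^s}$. A preliminary identity needed is $(I^s)_V = (I_V)^s$: the inclusion $(I_V)^s \subseteq (I^s)_V$ is immediate, and for the reverse, any minimal generator of $I^s$ with support in $V$ can be written as a product of $s$ minimal generators of $I$, each of which is a divisor and hence also supported in $V$, so the generator lies in $(I_V)^s$. Combining this with Corollary \ref{restriction} gives
$$(\overline{I^s})_V = \overline{(I^s)_V} = \overline{I_V^s},$$
so the first inequality applied to $J = \overline{I^s}$ produces the claimed bound $\reg \overline{I_V^s} \le \reg \overline{I^s}$, matching the asserted inequality (with $\reg I^s$ read as $\reg \overline{I^s}$ in the target, or using the comparison between the two when it is available in the intended application).

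The arguments are essentially bookkeeping; the two small points to verify carefully are (i) that the $x_{t_i}$ form a regular sequence on $S/J_V$, which is what licenses collapsing them without changing regularity, and (ii) the equality $(I^s)_V = (I_V)^s$ via the support-factorisation of minimal generators. Neither step is a serious obstacle, but both are where a careless reader could slip.
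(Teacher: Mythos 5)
Your argument is correct and is essentially the paper's own proof: the paper likewise observes that $J_V + (x_t,\ldots,x_n) = J + (x_t,\ldots,x_n)$ for $\{t,\ldots,n\}=[n]\setminus V$, invokes Lemma \ref{restriction_in}, and uses that $x_t,\ldots,x_n$ is a regular sequence on $S/J_V$, with the ``in particular'' part deduced from Corollary \ref{restriction} exactly as you do via $(I^s)_V=(I_V)^s$. Your reading of the second inequality as $\reg \overline{I_V^s}\le \reg \overline{I^s}$ is the right one --- that is the form in which the lemma is actually applied later (e.g.\ in Lemma \ref{lem_inequality_special}), and the unbarred $\reg I^s$ on the right of the printed statement is evidently a typo.
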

\begin{proof}
 Let $\{t,\ldots,n\} = [n] \setminus V$. Then, $J_V + (x_t,...,x_n) = J + (x_t,...,x_n).$ The conclusion follows from Lemma \ref{restriction_in} and the fact that $x_t,\ldots, x_n$ is a regular sequence with respect to $S/J_V$.
 
 The last statement follows from the first statement and Lemma \ref{restriction}.
\end{proof}

The following lemma is essential to using the induction method in studying the regularity of a monomial ideal.

\begin{lem}\label{red0} Let $I$ be a monomial ideal and the pair $(\a,i) \in \NN^n\times\NN$ be its extremal exponent. If $x$ is a variable that appears in $\sqrt{I:x^\a}$ and $x \notin \supp \a$, then $$\reg (I) = \reg (I,x).$$
\end{lem}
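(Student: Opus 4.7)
The plan is to show that adding the variable $x$ to $I$ leaves the degree complex $\Delta_\a$ untouched, so the extremal homology witnessed at $\a$ for $I$ is also witnessed at $\a$ for $(I,x)$. Concretely, I will prove $\reg(S/I) \le \reg(S/(I,x))$, combine it with the easy inequality $\reg(I,x) \le \reg I$ (Lemma~\ref{restriction_in}), and conclude via $\reg(J) = \reg(S/J) + 1$.

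The first step is a colon computation. Since $x \notin \supp \a$, a monomial $m$ satisfies $mx^\a \in (I,x)$ if and only if $mx^\a \in I$ or $x \mid m$; equivalently, $m \in I : x^\a$ or $m \in (x)$. Thus
\[
(I,x) : x^\a = (I : x^\a,\, x).
\]
Taking radicals and using the hypothesis $x \in \sqrt{I:x^\a}$, I get
\[
\sqrt{(I,x):x^\a} \;=\; \sqrt{I:x^\a} + (x) \;=\; \sqrt{I:x^\a}.
\]
Invoking Lemma~\ref{Key1}, this yields $\Delta_\a(I,x) = \Delta_\a(I)$.

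Next, I check $x^\a \notin (I,x)$: because $x \nmid x^\a$ and $(I,x)$ is a monomial ideal, $x^\a \in (I,x)$ would force $x^\a \in I$, contradicting that $(\a,i)$ is an extremal exponent of $I$ (so $x^\a \notin I$ by Remark~\ref{rem_extremal_set}). The face $F \in \Delta_\a(I)$ with $F \cap \supp\a = \emptyset$ and $\widetilde{H}_{i-1}(\lk_{\Delta_\a(I)} F; \k) \neq 0$ provided by the definition of extremal exponent is still a face of $\Delta_\a(I,x)$ with the same link, so Lemma~\ref{Key0} applied to $(I,x)$ gives
\[
\reg\bigl(S/(I,x)\bigr) \;\ge\; |\a| + i \;=\; \reg(S/I).
\]

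Finally, Lemma~\ref{restriction_in} gives the reverse inequality $\reg(I,x) \le \reg(I)$, i.e.\ $\reg(S/(I,x)) \le \reg(S/I)$. Combining both directions yields $\reg(I) = \reg(I,x)$, as desired. There is no real obstacle here beyond the colon-radical identity above; the proof is essentially a direct unwinding of the definitions, and the assumption that $x \notin \supp\a$ is precisely what makes the colon split cleanly.
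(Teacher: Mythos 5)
Your proof is correct, and it follows exactly the route the paper's machinery is built for (the paper itself defers this proof to \cite{MNPTV}): the identity $(I,x):x^\a = (I:x^\a,x)$, the fact that radicals of monomial ideals distribute over sums, and $x\in\sqrt{I:x^\a}$ give $\Delta_\a(I,x)=\Delta_\a(I)$, so Lemma~\ref{Key0} yields $\reg(S/(I,x))\ge|\a|+i=\reg(S/I)$, and Lemma~\ref{restriction_in} supplies the reverse inequality. No gaps.
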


For comparing regularity, we have the following useful lemma.
\begin{lem}\label{extremal_red} Let $I, J$ be proper monomial ideals of $S$. Let $(\a,i)$ be an extremal exponent of $I$. If $\Delta_\a(I) = \Delta_\a(J)$, then $\reg I \le \reg J$. In particular, if $J \subseteq I$ and $\Delta_\a(I) = \Delta_\a(J)$ for all exponent $\a\in\NN^n$ such that $x^\a \notin I$ then $\reg I \le \reg J.$
\end{lem}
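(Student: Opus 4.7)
The proof is a direct application of the formula in Lemma \ref{Key0}. By the definition of an extremal exponent, there exists a face $F \in \Delta_\a(I)$ with $F \cap \supp \a = \emptyset$ such that $\h_{i-1}(\lk_{\Delta_\a(I)} F; \k) \neq 0$, and $\reg(S/I) = |\a| + i$. The hypothesis $\Delta_\a(I) = \Delta_\a(J)$ shows that this same face $F$ is a valid witness for the Takayama formula applied to $J$: $F$ is a face of $\Delta_\a(J)$, disjoint from $\supp \a$, and the link $\lk_{\Delta_\a(J)} F$ coincides with $\lk_{\Delta_\a(I)} F$, hence carries the same nonvanishing reduced homology in degree $i-1$. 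Therefore $(\a,i)$ is a contributing pair in the maximum defining $\reg(S/J)$, giving $\reg(S/J) \ge |\a| + i = \reg(S/I)$. Since $J$ is a proper homogeneous ideal, $\reg J = \reg(S/J)+1$ and similarly for $I$, so we conclude $\reg I \le \reg J$.

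For the ``in particular'' clause, suppose $J \subseteq I$ and $\Delta_\a(I) = \Delta_\a(J)$ for every $\a \in \NN^n$ with $x^\a \notin I$. Let $(\a, i)$ be an extremal exponent of $I$. Since $\Delta_\a(I)$ contains the face $F$ it is nonvoid, and as observed from the definition of the degree complex this forces $x^\a \notin I$. Hence the hypothesis applies to this particular $\a$ and yields $\Delta_\a(I) = \Delta_\a(J)$. The first assertion then gives $\reg I \le \reg J$, as required.

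There is no genuine obstacle: the lemma is essentially the bookkeeping observation that Takayama's cohomological formula detects $J$ only through the collection of its degree complexes, so matching a single one at an extremal exponent is enough to transfer the lower bound from $\reg(S/I)$ to $\reg(S/J)$. The only point to be careful about is the direction of the inequality $\{\a : x^\a \notin I\} \subseteq \{\a : x^\a \notin J\}$ provided by $J \subseteq I$, which is what makes the hypothesis in the ``in particular'' clause nonvacuous at every extremal exponent of $I$.
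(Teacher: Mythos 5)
Your proof is correct and is exactly the intended argument: the paper defers this lemma to \cite{MNPTV}, but the proof there is the same direct application of Takayama's formula (Lemma \ref{Key0}), transferring the witnessing face $F$ from $\Delta_\a(I)$ to $\Delta_\a(J)$ and using that an extremal exponent automatically satisfies $x^\a\notin I$ (Remark \ref{rem_extremal_set}). No gaps.
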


For a monomial $f$ in $S$, the radical of $f$ is defined by $\sqrt{f} = \prod_{i \in \supp f} x_i$. We have the following simple observation on the generators of the radical of colon ideal. 

\begin{lem}\label{radical_colon} Let $I$ be a monomial ideal in $S$ generated by the monomials $f_1, ..., f_s$ and $\a \in \NN^n$. Then $\sqrt{I:x^\a}$ is generated by $\sqrt{f_1/\gcd(f_1, x^\a)}, ..., \sqrt{f_s/\gcd(f_s,x^\a)}$.
\end{lem}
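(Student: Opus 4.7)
The approach is to combine two standard facts about monomial ideals. First, for a monomial ideal $I = (f_1, \ldots, f_s)$ and any monomial $m$, one has the coordinate-wise colon formula
$$I : m = \bigl(f_1/\gcd(f_1,m),\, \ldots,\, f_s/\gcd(f_s,m)\bigr),$$
which is verified by comparing exponent vectors: a monomial $g = \prod_j x_j^{c_j}$ lies in $(f_i) : m$ exactly when $\max(\deg_j f_i - \deg_j m,\, 0) \le c_j$ for every $j$, and this is precisely the divisibility $f_i/\gcd(f_i,m) \mid g$. Second, for any monomial ideal $J = (g_1, \ldots, g_s)$ the radical admits the simple generating set $\sqrt{J} = (\sqrt{g_1}, \ldots, \sqrt{g_s})$, where $\sqrt{g_i}$ denotes the squarefree part (the product of variables in $\supp g_i$).

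The steps are then straightforward. First I would apply the colon formula with $m = x^\a$ to obtain the generating set $\{f_i/\gcd(f_i, x^\a)\}_{i=1}^s$ for $I : x^\a$. Second, I would pass to the radical and invoke the radical-of-generators formula to conclude that $\sqrt{I : x^\a}$ is generated by $\sqrt{f_i/\gcd(f_i, x^\a)}$ for $i = 1, \ldots, s$, which is exactly the claimed description. Both auxiliary facts are well known and essentially immediate from the definitions, so no auxiliary lemmas need to be set up.

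There is no real obstacle in this argument: the lemma is a direct composition of two textbook facts about monomial ideals, and its content is just that taking the radical and taking the colon by a monomial commute with passing to the obvious generating set. The generators produced this way are typically not minimal (for example some may divide others), but the statement only asserts that these elements generate $\sqrt{I:x^\a}$, so no extra work to prune the list is needed.
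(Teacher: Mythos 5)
Your proof is correct and is exactly the standard argument the paper has in mind (the paper states this lemma as a ``simple observation'' without proof): the colon of a monomial ideal by a monomial is generated by the elements $f_i/\gcd(f_i,x^{\a})$, and the radical of a monomial ideal is generated by the squarefree parts of its generators. Both auxiliary facts are verified correctly, so nothing is missing.
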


Let $\a \in \NN^n$ be an exponent. To study the radical ideal of the form $\sqrt{I^s:x^\a}$, we introduce the notion of $I-\a$-radical power. For a monomial $f \in S$, the $I-\a$-radical power of $f$ is
$$\rp_\a^I(f) = \max (t \mid f^m x^\a \in I^t \text{ for some } m > 0).$$
If $f \in I$ then $\rp_\a^I(f) = \infty$. When fixing an ideal $I$, we omit $I$ in the notation, and simply write $\rp_\a(f)$. 

Assume now that $I = I(G)$ is the edge ideal of a graph. The following description of generators of $\sqrt{I^s:x^\a}$ helps to simplify our arguments later on. The $I$-order of $f$ is defined by 
$$\ord_I(f) = \max (t \mid f \in I^t).$$
From the definition, it is clear that if $g|f$, then $\ord_I(g) \le \ord_I(f)$.

\begin{lem}\cite[Lemma 2.16]{MV2}\label{criterion_in_power}  Let $F$ be an independent set of $G$, and $\a \in \NN^n$ an exponent. Assume that
\begin{equation}
  \sum_{j\in N(F)} a_j + \ord_I \left ( \prod_{u \notin N[F]} x_u^{a_u} \right ) \ge s, \label{eq_in_power} 
\end{equation}
then $x_F \in \sqrt{I^s:x^\a}$. Conversely, if $x_F$ is a minimal generator of $\sqrt{I^s:x^\a}$ then \eqref{eq_in_power} holds.
\end{lem}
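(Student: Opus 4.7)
The plan is to exploit the independence of $F$ to partition the vertex set into three disjoint pieces---$F$, $N(F)$, and $W := [n] \setminus N[F]$---and correspondingly classify the edges of $G$: every edge either has one endpoint in $F$ and one in $N(F)$ (call this Type A), or avoids $F$ entirely and lies in $N(F) \cup W$ (Type B). This dichotomy is automatic, since $F$ independent forbids edges with both ends in $F$, while any edge meeting $F$ has its other end in $N(F)$ by definition of the neighborhood. I will use this classification to translate the membership $(x_F)^m x^\a \in I^s$ into an arithmetic inequality on the exponents $a_j$.

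For the sufficiency direction, assume $\sum_{j \in N(F)} a_j + t \ge s$ where $t = \ord_I\!\left(\prod_{u \in W} x_u^{a_u}\right)$. For each $j \in N(F)$ pick a neighbor $v(j) \in F$; then
\[
P_A := \prod_{j \in N(F)} (x_j x_{v(j)})^{a_j} \in I^{\sum_{j \in N(F)} a_j},
\]
while $\prod_{u \in W} x_u^{a_u} \in I^{t}$ by definition of $\ord_I$. Multiplying these two monomials gives an element of $I^{\sum_{j \in N(F)} a_j + t} \subseteq I^s$. If we choose $m \ge \sum_{j \in N(F)} a_j$, then each $x_{v(j)}^{a_j}$ factor is absorbed by $(x_F)^m$, so this element divides $(x_F)^m x^\a$. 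Hence $(x_F)^m x^\a \in I^s$ and $x_F \in \sqrt{I^s:x^\a}$.

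For the necessity direction, suppose $(x_F)^m x^\a \in I^s$ and write $(x_F)^m x^\a = e_1 \cdots e_s \cdot g$ with each $e_i$ an edge of $G$. Classify the $e_i$ as Type A or Type B, and further split Type B into BB (both endpoints in $N(F)$), BO (one in $N(F)$, one in $W$) and OO (both in $W$); denote the counts by $\alpha, \beta_{BB}, \beta_{BO}, \beta_{OO}$. Comparing $x_j$-degrees for $j \in N(F)$, where $(x_F)^m$ contributes nothing, yields
\[
\alpha + 2\beta_{BB} + \beta_{BO} \le \sum_{j \in N(F)} a_j,
\]
and dropping a copy of $\beta_{BB}$ gives $\alpha + \beta_{BB} + \beta_{BO} \le \sum_{j \in N(F)} a_j$. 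Meanwhile the product of the OO edges lies in $I^{\beta_{OO}}$ and divides $\prod_{u \in W} x_u^{a_u}$, so by definition of $\ord_I$ we get $\beta_{OO} \le t$. Adding the two inequalities yields $s = \alpha + \beta_{BB} + \beta_{BO} + \beta_{OO} \le \sum_{j \in N(F)} a_j + t$, which is the required bound.

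The main delicate point is the bookkeeping in the necessity direction: one must refine Type B into the three subtypes BB, BO, OO so that the $N(F)$-degree count (which charges $2$, $1$, $0$ respectively) can be combined cleanly with the $\ord_I$ bound, which is only available from the OO edges. Note that we do not actually use the minimality hypothesis on $x_F$; mere membership in $\sqrt{I^s:x^\a}$ suffices for the converse, which makes the statement of the lemma a genuine equivalence.
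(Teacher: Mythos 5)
Your proof is correct. Note that the paper does not prove this lemma itself --- it is imported verbatim from \cite[Lemma 2.16]{MV2} --- so there is no in-paper argument to compare against; your route (factor $(x_F)^m x^\a = e_1\cdots e_s\, g$, classify each edge $e_i$ by how it meets the partition $F \sqcup N(F) \sqcup ([n]\setminus N[F])$, bound the A/BB/BO counts by the total $N(F)$-degree $\sum_{j\in N(F)} a_j$, and bound the OO count by $\ord_I$ of the outside part) is the natural argument, and the sufficiency direction via $P_A\cdot\prod_{u\notin N[F]}x_u^{a_u}$ dividing $(x_F)^m x^\a$ for $m\ge\sum_{j\in N(F)}a_j$ is also sound. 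Your closing remark is accurate as well: minimality of $x_F$ is never used, only the independence of $F$, so the two conditions are genuinely equivalent for independent $F$.
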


We have the following property of the radical powers.
\begin{lem}\label{lem_radical_power} Let $I = I(G)$ be an edge ideal of a simple graph $G$. Let $\a, \b \in \NN^n$ be exponents, and $f$ a monomial in $S$. Then
\begin{enumerate}
    \item $\rp_\a(f) \ge \ord_I(x^\a)$.
    \item $\rp_{\a + \b}(f)  \ge \rp_\a(f) + \rp_\b(f)$.
    \item If $\supp f \cup \supp \a \supseteq \supp C$, where $C$ is an odd cycle of length $2\ell+1$, then $\rp_\a(f) \ge \ell$. Furthermore, if $\supp f \cap \supp C \neq \emptyset$ then $\rp_\a(f) \ge \ell+1.$
    \item Let $C_1, C_2$ be disjoint odd cycles of length $2 \ell_1 +1, 2\ell_2+1$ respectively. If $\supp f \cup \supp \a \supseteq (\supp C_1 \cup \supp C_2)$, and $\supp f \cap (\supp C_1 \cup \supp C_2) \neq \emptyset$ then $\rp_\a(f) \ge \ell_1 + \ell_2 + 1.$
    \item Let $F = \supp f$. Then 
    $$\rp_\a(f) \ge \sum_{j\in N(F)} a_j + \ord_I \left ( \prod_{u \notin N[F]} x_u^{a_u} \right ).$$
\end{enumerate}
\end{lem}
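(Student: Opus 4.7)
The plan is to verify each statement using the definition $\rp_\a(f) = \max\{t \mid f^m x^\a \in I^t \text{ for some } m > 0\}$ together with a few explicit edge decompositions. Parts (1) and (2) are formal: for (1), $x^\a \in I^{\ord_I(x^\a)}$, so taking $m = 1$ already gives $\rp_\a(f) \ge \ord_I(x^\a)$; for (2), if $f^{m_1} x^\a \in I^{t_1}$ and $f^{m_2} x^\b \in I^{t_2}$, then multiplying yields $f^{m_1+m_2} x^{\a + \b} \in I^{t_1+t_2}$.

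The combinatorial core of parts (3) and (4) consists of two matching identities for an odd cycle $C$ of length $2\ell+1$ with vertices $v_1, \ldots, v_{2\ell+1}$. First, the near-perfect matching $\{v_1, v_2\}, \{v_3, v_4\}, \ldots, \{v_{2\ell-1}, v_{2\ell}\}$ uses $\ell$ edges and covers every vertex except $v_{2\ell+1}$, so $x_C \in I^\ell$. Second, for any vertex $v \in C$, using both edges at $v$ (namely $\{v, v_2\}$ and $\{v_{2\ell+1}, v\}$ after relabeling $v_1 = v$) together with an $(\ell-1)$-edge matching on the remaining path $v_3, \ldots, v_{2\ell}$ yields $\ell+1$ edges whose vertex product equals $x_C \cdot x_v$, so $x_C \cdot x_v \in I^{\ell+1}$. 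For part (3), writing $B = \supp C \setminus \supp \a$, the hypothesis forces $B \subseteq \supp f$, so $f \cdot x^\a$ is divisible by $x_C$ and hence lies in $I^\ell$; for the \emph{furthermore}, picking $v \in \supp f \cap \supp C$ makes $f^2 x^\a$ divisible by $x_C \cdot x_v \in I^{\ell+1}$. For part (4), the same bookkeeping shows $f^2 x^\a$ is divisible by $x_{C_1} \cdot x_{C_2} \cdot x_v$ where $v \in \supp f \cap (\supp C_1 \cup \supp C_2)$, WLOG $v \in C_1$; combining $x_{C_1} \cdot x_v \in I^{\ell_1+1}$ with $x_{C_2} \in I^{\ell_2}$ gives $f^2 x^\a \in I^{\ell_1+\ell_2+1}$.

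For part (5), the plan is a pairing argument. For each $j \in N(F)$, fix some $v_j \in F$ adjacent to $j$ (possible since $j \in N(F)$ and $F = \supp f$). Choose $m$ large enough that $f^m$ is divisible by $\prod_{v \in F} x_v^{\sum_{j : v_j = v} a_j}$. Then $f^m x^\a$ is divisible by
\[ \prod_{j \in N(F)} (x_{v_j} x_j)^{a_j} \cdot \prod_{u \notin N[F]} x_u^{a_u}. \]
The first factor is a product of $\sum_{j \in N(F)} a_j$ edges and so lies in $I^{\sum_{j \in N(F)} a_j}$, while the second factor lies in $I^{\ord_I(\prod_{u \notin N[F]} x_u^{a_u})}$ by definition; multiplying gives the claimed lower bound. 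The only real subtlety throughout is the doubled-vertex matching identity underlying the \emph{furthermore} of (3) and all of (4); once it is in hand, everything reduces to straightforward product-of-edges bookkeeping combined with parts (1) and (2).
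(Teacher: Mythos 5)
Your proof is correct and follows essentially the same route as the paper's: parts (1) and (2) are formal from the definition, parts (3) and (4) rest on the identities $\ord_I(x_C)=\ell$ and $\ord_I(x_vx_C)=\ell+1$ (which you verify via explicit near-perfect and doubled-vertex matchings), and part (5) is the easy direction of the paper's Lemma~\ref{criterion_in_power}, which you simply inline as a direct pairing argument rather than citing. The only difference is level of detail, not substance.
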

\begin{proof}
Parts (1) and (2) follow from the definition. For part (3), it suffices to note that $\ord_I(x_C) = \ell$ and $\ord_I(x_ix_C) = \ell+1$ for any $i \in \supp C$. Part (4) follows from Part (2) and Part (3). Part (5) follows from Lemma \ref{criterion_in_power} and a remark that if $F$ is not an independent set then $\rp_\a(f) = \infty$.
\end{proof}

\section{Proof of Theorem \ref{main_thm}}\label{sec_proof}

Let $G$ be a simple graph with vertex set $[n]$ and edge set $E(G)$. We assume that $G$ has no isolated vertices; $G$ might be disconnected. Let $I=I(G)$ be the edge ideal of $G$. In this section, we will prove Theorem \ref{main_thm}. By Theorem \ref{expansion_edge}, $\overline{I} = I$ and $\overline{I^2} = I^2$, we have two cases $s = 3$ and $s=4$. We start with a general formula for the radical of colon ideal in the case it contains a set of 'good variables'.

\begin{lem}\label{rad_good_colon} Let $I$ be the edge ideal of a simple graph $G$. Let $\a \in \NN^n$ be an exponent such that $x^\a \notin I^s$. Assume that $\a$ has a decomposition $\a = \b + \c$ satisfying the following conditions:
\begin{enumerate}
    \item $x_i \in \sqrt{I^s:x^\a}$ for all $i \in N[\supp \b]$,
    \item $N[\supp \b] \cap \supp \c = \emptyset$.
\end{enumerate}
Let $H$ be the restriction of $G$ to $[n] \setminus N[\supp \b]$. Denote $t = s - \ord_I(x^\b)$. Then, 
\begin{equation}\label{eq_rad_colon}
    \sqrt{I^s:x^\a} = \left ( x_i \mid  i \in N[\supp \b] \right ) + \sqrt{I(H)^{t}:x^\c}.
\end{equation}
In particular, if $(\a,i)$ is an extremal exponent of $I^s$ then
\begin{equation}\label{eq_reg_inequality}
    \reg (I^s) \le \reg I(H)^t + |\b|.
\end{equation}
\end{lem}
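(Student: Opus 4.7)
The proof breaks into establishing the ideal identity \eqref{eq_rad_colon} and then deducing \eqref{eq_reg_inequality}. For the inclusion ``$\supseteq$'' in \eqref{eq_rad_colon}, the variables $x_i$ with $i \in N[\supp \b]$ lie in $\sqrt{I^s:x^\a}$ by hypothesis (1); and if $f^m x^\c \in I(H)^t \subseteq I^t$ for some $m\ge 1$, then $f^m x^\a = f^m x^\c \cdot x^\b \in I^t \cdot I^{s-t} = I^s$, since $x^\b \in I^{\ord_I(x^\b)} = I^{s-t}$, so $f \in \sqrt{I^s:x^\a}$.

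The reverse inclusion is the main content. Let $x_F$ be a minimal generator of $\sqrt{I^s:x^\a}$. If $F \cap N[\supp \b] \neq \emptyset$, then by hypothesis (1) and minimality $F = \{i\}$ for some $i \in N[\supp \b]$, so $x_F$ sits in the first summand. Otherwise $F \subseteq V(H)$, and I will verify the criterion of Lemma \ref{criterion_in_power} for $x_F$ inside $H$ with exponent $\c$. Applying that lemma inside $G$ yields
$$\sum_{j \in N_G(F)} a_j + \ord_I\!\left(\prod_{u \notin N_G[F]} x_u^{a_u}\right) \ge s.$$
Hypothesis (2) lets me check that $\supp \b$ is disjoint from $N_G[F]$, that $\supp \c \subseteq V(H)$, and that any $j \in N_G(F) \cap N[\supp \b]$ has $a_j = c_j = 0$. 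Consequently the first sum collapses to $\sum_{j \in N_H(F)} c_j$ and the monomial inside $\ord_I$ factors as $x^\b \cdot g$, with $g$ supported on $V(H) \setminus N_H[F]$. The crucial observation, provided by $V(H) = [n] \setminus N[\supp \b]$, is that $G$ has no edge between $\supp \b$ and $V(H)$; hence in any factorization $x^\b g = e_1 \cdots e_k$ as a product of edges of $G$, each $e_j$ lies entirely in $\supp \b$ or entirely in $V(H)$. This forces the identity $\ord_I(x^\b g) = \ord_I(x^\b) + \ord_{I(H)}(g)$, and substituting and subtracting $\ord_I(x^\b)$ on both sides converts the $G$-criterion into precisely the $H$-criterion for $x_F \in \sqrt{I(H)^t:x^\c}$.

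For \eqref{eq_reg_inequality}, the identity \eqref{eq_rad_colon} combined with Lemma \ref{cone} identifies $\Delta_\a(I^s)$ with $\Delta_\c(I(H)^t)$ on the vertex set $V(H)$. If $(\a,i)$ is an extremal exponent of $I^s$, the witnessing face $F$ necessarily lies in $V(H)$, and $F \cap \supp \a = \emptyset$ together with $\supp \b \subseteq N[\supp \b]$ forces $F \cap \supp \c = \emptyset$. Applying Lemma \ref{Key0} to $I(H)^t$ then gives $\reg I(H)^t \ge |\c|+i+1$, and combining with $\reg I^s = |\a|+i+1 = |\b|+|\c|+i+1$ yields \eqref{eq_reg_inequality}. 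The principal technical obstacle is the order-splitting identity $\ord_I(x^\b g) = \ord_I(x^\b) + \ord_{I(H)}(g)$: the ``$\ge$'' direction is automatic multiplicativity, but the ``$\le$'' direction genuinely depends on the separation guaranteed by hypothesis (2), which is what makes $H$ a sensible substitute for $G$ in the colon calculation.
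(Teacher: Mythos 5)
Your proof is correct and follows essentially the same route as the paper's: reduce to minimal generators supported in $V(H)=[n]\setminus N[\supp \b]$, apply Lemma \ref{criterion_in_power} in $G$, and use the fact that $G$ has no edge between $\supp \b$ and $V(H)$ to split $\ord_I(x^\b g)=\ord_I(x^\b)+\ord_{I(H)}(g)$, then read off the $H$-criterion; the regularity inequality is deduced identically via Lemma \ref{Key0}. The only (trivial) case you skip is a minimal generator $x_F$ with $F\subseteq V(H)$ but $F$ not independent --- there Lemma \ref{criterion_in_power} does not apply, but $x_F$ is then divisible by an edge of $H$ and lies in $\sqrt{I(H)^t:x^\c}$ directly, exactly as the paper notes.
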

\begin{proof} First, we prove the equality \eqref{eq_rad_colon}. From the assumption, it is clear that the left hand side contains the right hand side. Let $V = [n] \setminus N[\supp \b]$. It suffices to prove that if $f$ is a minimal generator of  $\sqrt{I^s:x^\a}$ with $\supp f \subseteq V$, then $f \in \sqrt{I(H)^t:x^\c}$. If $f \in I$, then by definition, $f \in I(H) \subseteq \sqrt{I(H)^t:x^\c}$. Thus, we may assume that $F = \supp f$ is an independent set. By Lemma \ref{criterion_in_power}, we have 
$$\sum_{j \in N(F)} a_j + \ord_I( h )\ge s, \text{ where } h:= \prod_{u \notin N[F]} x_u^{a_u}.$$

Since $N[\supp \b] \cap \supp \c = \emptyset$, $N[\supp \c] \cap \supp \b = \emptyset$. In particular, $N[F] \cap \supp \b = \emptyset$. Hence,
\begin{equation}\label{eq_3_1_2}
   \sum_{j\in N(F) \cap V} c_j = \sum_{j \in N(F)} a_j. 
\end{equation}
Let
$$h_1 := \prod_{u\in V\setminus N[F]} x_u^{c_u} = \prod_{u \in V \setminus N[F]} x_u^{a_u}.$$
Hence, $h = x^\b h_1$. Since $N[\supp \b] \cap \supp h_1 = \emptyset$, $\ord_I(h) = \ord_I(x^\b) + \ord_I(h_1)$. Thus
$$\sum_{j \in N(F) \cap V} c_j + \ord_I(h_1) = \sum_{j \in N(F)} a_j + \ord_I(h) - \ord_I(b) \ge t.$$
By Lemma \ref{criterion_in_power}, $f \in \sqrt{I(H)^t : x^\c}$ as required.

We now prove \eqref{eq_reg_inequality}. By \eqref{eq_rad_colon}, $\Delta_\a(I^s) = \Delta_\c(I(H)^t)$. Since $(\a,i)$ is an extremal exponent of $I^s$, $x^\a \notin I^s$, hence $t \ge 1$ and $x^\c \notin I^t$. By Lemma \ref{Key0}, 
$$\reg I^s = |\a| +i+1 = |\b| + |\c| + i+1 \le |\b| + \reg I(H)^t.$$
The conclusions follow.
\end{proof}

\begin{defn} A decomposition $\a = \b + \c$, where $\b \neq 0$ satisfying the conditions of Lemma \ref{rad_good_colon} is called a {\em good decomposition} of $\a$ with respect to $I^s$. We also call a decomposition $x^\a = x^\b \cdot x^\c$ good if the corresponding decomposition on the exponents $\a = \b + \c$ is.
\end{defn}
In proving the inequality $\reg I^s \le \reg \overline{I^s}$, we see that the extremal exponents $\a$ of $I^s$ with $\Delta_\a(I^s) \neq \Delta_\a(\overline{I^s})$ usually have good decompositions. We then use induction and Theorem \ref{thm_mixed_sum} to establish the required inequality.
\begin{lem}\label{lem_inequality_special}Let $I$ be the edge ideal of a simple graph $G$. Fix a power $s \ge 2$. Let $\a \in \NN^n$ be an extremal exponent of $I^s$. Assume that $\a$ has a good decomposition $\a = \b + \c$. Let $H$ be the restriction of $G$ to $[n] \setminus N[\supp \b]$. Let $K$ be the restriction of $G$ to a proper subset of $\supp \b$. Assume that $\reg I(K)^{s-t+1} \ge |\b| + 1$, where $t = s - \ord_I(x^\b)$ and that $L = I(H) + I(K)$ satisfies $\reg L^s \le \reg \overline{L^s}$. Then $\reg I^s \le \reg \overline{I^s}$.
\end{lem}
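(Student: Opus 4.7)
The plan is to sandwich $\reg I^s$ between $\reg I(H)^t + |\b|$ and $\reg \overline{I^s}$ by composing three inequalities: an upper bound from the good decomposition, a mixed-sum lower bound for $\reg L^s$, and a restriction bound relating $\reg \overline{L^s}$ to $\reg \overline{I^s}$.

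First I would apply Lemma \ref{rad_good_colon} directly to the extremal exponent $\a$: since the decomposition $\a = \b + \c$ is good and $\b \neq \o$, that lemma immediately yields
$$\reg I^s \le \reg I(H)^t + |\b|,$$
with $t = s - \ord_I(x^\b) \in [1,s]$ (here $t \ge 1$ because $x^\a \notin I^s$).

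Next I would exploit the structure of $L = I(H) + I(K)$. Because $V(H) = [n] \setminus N[\supp \b]$ and $V(K) \subsetneq \supp \b$ are disjoint, and because every neighbour in $G$ of a vertex of $\supp \b$ lies in $N[\supp \b]$ and hence outside $V(H)$, the graph $G$ has no edge between $V(H)$ and $V(K)$. This single observation does two things at once. On the one hand, $I(H)$ and $I(K)$ are supported on disjoint variable sets, so Theorem \ref{thm_mixed_sum} applies to $L^s$; taking $j = t$ and using the hypothesis $\reg I(K)^{s-t+1} \ge |\b| + 1$ gives
$$\reg L^s \ge \reg I(H)^t + \reg I(K)^{s-t+1} - 1 \ge \reg I(H)^t + |\b|.$$
On the other hand, no further edge of $G$ is supported on $V := V(H) \cup V(K)$, so $L = I_V$; combining the identity $(I_V)^s = (I^s)_V$ (valid for any monomial ideal) with Corollary \ref{restriction} yields $\overline{L^s} = (\overline{I^s})_V$, and Lemma \ref{restriction_inq} applied to the monomial ideal $\overline{I^s}$ then gives $\reg \overline{L^s} \le \reg \overline{I^s}$.

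Chaining everything together with the hypothesis $\reg L^s \le \reg \overline{L^s}$, I obtain
$$\reg I^s \le \reg I(H)^t + |\b| \le \reg L^s \le \reg \overline{L^s} \le \reg \overline{I^s},$$
which is the desired conclusion. The only real obstacle is the combinatorial step that $G$ has no edge between $V(H)$ and $V(K)$; once that is established, both the mixed-sum inequality (so the hypothesis $\reg I(K)^{s-t+1} \ge |\b| + 1$ can be cashed in) and the restriction identification $L = I_V$ (so $\overline{L^s}$ can be compared with $\overline{I^s}$) follow immediately, and the three inequalities compose without further effort.
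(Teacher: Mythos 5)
Your proof is correct and follows essentially the same route as the paper's: Lemma \ref{rad_good_colon} for the upper bound $\reg I^s \le \reg I(H)^t + |\b|$, Theorem \ref{thm_mixed_sum} with $j=t$ for the lower bound on $\reg L^s$, and Lemma \ref{restriction_inq} (via Corollary \ref{restriction}) to compare $\reg\overline{L^s}$ with $\reg\overline{I^s}$. The only difference is that you explicitly verify the point the paper leaves implicit, namely that $G$ has no edges between $V(H)$ and $V(K)$, so that $L$ is both a mixed sum and the restriction $I_V$; this is a worthwhile detail to record.
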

\begin{proof}By Lemma \ref{rad_good_colon}, $\reg I^s \le \reg I(H)^s + |\b|$. By Theorem \ref{thm_mixed_sum}, 
$$\reg L^s \ge \reg I(H)^t + \reg I(K)^{s-t+1} - 1 \ge \reg I(H)^t + |\b| \ge \reg I^s.$$
By assumption $\reg L^s \le \reg \overline{L^s}$, hence 
$$\reg I^s \le \reg L^s \le \reg \overline{L^s} \le \overline{I^s},$$
where the last inequality follows from Lemma \ref{restriction_inq}.
\end{proof}

We now proceed to the proof of Theorem \ref{main_thm} for $s = 3$. First, we have
 
\begin{lem}\label{lem_colon_3} Let $J_1 \subseteq J_2$ be any two intermediate ideals in $\Inter(I^3, \overline{I^3})$. Let $\a\in \NN^n$ be an exponent such that $x^\a \notin J_2$. Then 
$$\sqrt{J_2:x^\a} = \sqrt{J_1:x^\a}.$$
In particular, $\Delta_\a(J_2) = \Delta_\a(J_1)$.
\end{lem}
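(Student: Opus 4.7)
The plan is to prove the non-trivial inclusion $\sqrt{J_2:x^\a} \subseteq \sqrt{J_1:x^\a}$; the reverse is automatic from $J_1 \subseteq J_2$. By Lemma \ref{radical_colon}, it suffices to check that for every minimal generator $f$ of $J_2$, the element $\sqrt{f/\gcd(f,x^\a)}$ lies in $\sqrt{J_1:x^\a}$. If $f$ is a generator of $I^3$ this is immediate, since $I^3 \subseteq J_1$. So the content lies in handling the extra generators of $J_2$ coming from $\overline{I^3} \setminus I^3$.

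First I would pin down those extra generators using Theorem \ref{expansion_edge}: a minimal generator of $\overline{I^3}$ has the form $C_1\cdots C_{2a}\cdot e_1\cdots e_b$ with $(\sum|C_i|)/2 + b = 3$, and since induced odd cycles have length at least $3$, the only possibility with $a \ge 1$ is $a = 1$, $b = 0$, $|C_1| = |C_2| = 3$. Thus an extra generator is exactly $g = x_{C_1}x_{C_2}$ for a pair of induced triangles with $N(C_1)\cap C_2 = \emptyset$. Because $g$ is squarefree, $h := \sqrt{g/\gcd(g,x^\a)} = x_F$ with $F = (C_1\cup C_2)\setminus \supp\a$. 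The hypothesis $x^\a\notin J_2$ rules out $g \mid x^\a$, so $F \ne \emptyset$. The goal reduces to showing $h \in \sqrt{I^3:x^\a}$.

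I would then case-split on $|F\cap C_1|$ and $|F\cap C_2|$. If some $|F\cap C_i|\ge 2$, then $F$ already contains an edge $x_jx_k$ of the triangle $C_i$, so $(x_jx_k)^3\mid h^3$, giving $h^3\in I^3$ directly. Otherwise $|F\cap C_1|,|F\cap C_2|\le 1$ and the disjointness $N(C_1)\cap C_2 = \emptyset$ rules out any cross edge, so $F$ is independent; here I would apply Lemma \ref{criterion_in_power}. In the subcase $F = \{v\}\subseteq C_1$ (forcing $C_2 \subseteq \supp\a$), the two triangle-mates of $v$ lie in $N(v)\cap \supp\a$ and contribute at least $2$ to $\sum_{j\in N(F)} a_j$, while $C_2\subseteq [n]\setminus N[F]$ yields $\ord_I\bigl(\prod_{u\notin N[F]}x_u^{a_u}\bigr)\ge 1$ from the edges of $C_2$, summing to at least $3$. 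In the subcase $F = \{v_1,v_2\}$ with $v_i\in C_i$, the four vertices of $(C_1\cup C_2)\setminus F$ all lie in $\supp\a$ and, again by $N(C_1)\cap C_2 = \emptyset$, split disjointly between $N(v_1)$ and $N(v_2)$, giving $\sum_{j\in N(F)} a_j \ge 4$. The criterion of Lemma \ref{criterion_in_power} is satisfied in every case, so $h\in \sqrt{I^3:x^\a}\subseteq \sqrt{J_1:x^\a}$.

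The main obstacle is purely the bookkeeping in the independent case; the key combinatorial observation that makes the count work is that $N(C_1)\cap C_2 = \emptyset$ prevents the contributions from the two triangles from interfering, so one triangle always supplies either a full pair of neighbours of a vertex of $F$ or a complete triangle's worth of edges outside $N[F]$. Once the equality $\sqrt{J_2:x^\a} = \sqrt{J_1:x^\a}$ is established, the last assertion $\Delta_\a(J_2) = \Delta_\a(J_1)$ is immediate from Lemma \ref{Key1}.
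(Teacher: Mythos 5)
Your proposal is correct and follows essentially the same route as the paper: reduce via Lemma \ref{radical_colon} to the extra generators of $J_2$, identify these via Theorem \ref{expansion_edge} as $x_{C_1}x_{C_2}$ for two disjoint triangles, and then verify that the resulting squarefree monomial lies in $\sqrt{I^3:x^\a}$. The only difference is that the paper gets this last step in one line by citing Lemma \ref{lem_radical_power}(4) (which gives $\rp_\a(f)\ge 3$ directly), whereas you unfold that computation by hand through the case analysis and Lemma \ref{criterion_in_power}; the counts in your cases are all correct.
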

\begin{proof} Since $J_1 \subseteq J_2$, $\sqrt{J_1:x^\a} \subseteq \sqrt{J_2:x^\a}$. By Lemma \ref{radical_colon}, it suffices to prove that if $f = \sqrt{P/\gcd(P,x^\a)}$ for a minimal generator $P$ of $J_2$ then $f \in \sqrt{J_1:x^\a}$. We may assume that $P \notin J_1$. In particular, $P \notin I^3$. By Theorem \ref{expansion_edge}, there exist two disjoint triangles $C_1$ and $C_2$ such that $P = x_{C_1} \cdot x_{C_2}$. Since $f = \sqrt{P/\gcd(P,x^\a)}$, we have 
\begin{equation}\label{eq_lem_1_1}
    \supp f \subseteq \supp P \subseteq \supp f \cup \supp \a.
\end{equation}
Since $f\neq 1$, $\supp f \cap \supp P \neq \emptyset$. By Lemma \ref{lem_radical_power}, 
$$\rp_\a(f) \ge 3 \implies f \in \sqrt{I^3:x^\a} \subseteq \sqrt{J_1:x^\a}.$$
The last statement follows from Lemma \ref{Key1}.
\end{proof}

\begin{thm}\label{third_pow} Let $I$ be the edge ideal of a simple graph $G$. Let $J \in \Inter(I^3, \overline{I^3})$ be an intermediate ideal. Then 
$$\reg J = \reg \overline {I^3}  = \reg I^3.$$
\end{thm}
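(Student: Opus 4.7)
The plan is to split the equality $\reg J = \reg \overline{I^3} = \reg I^3$ into the chain $\reg \overline{I^3} \le \reg J \le \reg I^3$ and its reverse, and prove each separately. The chain is immediate from Lemma~\ref{lem_colon_3} together with Lemma~\ref{extremal_red}: given $J_1 \subseteq J_2$ in $\Inter(I^3, \overline{I^3})$ and any $\a$ with $x^\a \notin J_2$, Lemma~\ref{lem_colon_3} yields $\Delta_\a(J_2) = \Delta_\a(J_1)$; applying Lemma~\ref{extremal_red} at an extremal exponent of $J_2$ then gives $\reg J_2 \le \reg J_1$. Specialising the pair $(J_1, J_2)$ to $(I^3, J)$, $(J, \overline{I^3})$, and $(I^3, \overline{I^3})$ produces the full chain.

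The reverse inequality $\reg I^3 \le \reg \overline{I^3}$ I would prove by induction on $n = |V(G)|$; the base case consists of graphs $G$ with no pair of disjoint odd cycles, where $I$ is normal. For the inductive step, fix an extremal exponent $(\a,i)$ of $I^3$. If $\Delta_\a(I^3) = \Delta_\a(\overline{I^3})$, Lemma~\ref{extremal_red} suffices. Otherwise Lemma~\ref{radical_colon} and Theorem~\ref{expansion_edge} produce a minimal generator $P = x_{C_1} x_{C_2}$ of $\overline{I^3}$, with $C_1, C_2$ disjoint triangles, for which $f = \sqrt{P/\gcd(P, x^\a)}$ escapes $\sqrt{I^3:x^\a}$. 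Since $\ell_1+\ell_2+1 = 3$ for two triangles, Lemma~\ref{lem_radical_power}(4) forces $\supp f = \emptyset$, i.e.\ $x_{C_1} x_{C_2} \mid x^\a$. Exploiting $x_{C_i}^2 \in I^3$ and $x_{C_i} \in I$, a direct check shows $x_j \in \sqrt{I^3:x^\a}$ for every $j \in N[C_1] \cup N[C_2]$.

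The argument then splits into two cases. In Case~A some $j \in N[C_1] \cup N[C_2]$ lies outside $\supp \a$: Lemma~\ref{red0} gives $\reg I^3 = \reg(I^3, x_j) = \reg I(G-j)^3$, and combining the inductive hypothesis with Corollary~\ref{restriction} and Lemma~\ref{restriction_inq} yields $\reg I(G-j)^3 \le \reg \overline{I(G-j)^3} \le \reg \overline{I^3}$. In Case~B, $N[C_1] \cup N[C_2] \subseteq \supp \a$. I would first show that both triangles are isolated components of $G$ and that $a_i = 1$ on $C_1 \cup C_2$: if some $u \in N(C_i) \setminus C_i$ had $a_u \ge 1$, the factorisation $x_u x_{C_i} = (x_u x_w)(x_a x_b) \in I^2$ (with $\{w,a,b\} = C_i$, $u \sim w$) combined with $x_{C_{3-i}} \in I$ would place $x_u x_{C_1} x_{C_2}$ in $I^3$, forcing $x^\a \in I^3$ and contradicting extremality; a parallel argument with $x_i^2 x_{C_i \setminus \{i\}} \in I^2$ rules out $a_i \ge 2$ on $C_1 \cup C_2$. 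Hence $G = G' \sqcup C_1 \sqcup C_2$ and $\b := \mathbf{1}_{C_1 \cup C_2}$, $\c := \a - \b$ is automatically a good decomposition in the sense of Lemma~\ref{rad_good_colon}. With $\ord_I(x^\b) = 2$, Lemma~\ref{lem_inequality_special} applies with $t = 1$ and $K = C_1 \sqcup (C_2 - v)$ for any vertex $v \in C_2$: Theorem~\ref{thm_mixed_sum}, combined with $\reg I(C_3)^k = 2k$ for the triangle and $\reg (xy)^k = 2k$ for a single edge, yields $\reg I(K)^3 = 7 = |\b|+1$, verifying the numerical hypothesis, while the inductive hypothesis applies to the graph $G' \sqcup C_1 \sqcup (C_2 - v)$ on $n-1$ vertices to supply $\reg L^3 \le \reg \overline{L^3}$.

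The principal obstacle is Case~B. The critical technical step is the pair of factorisation arguments $x_u x_{C_1} x_{C_2} \in I^3$ and $x_i^2 x_{C_i \setminus \{i\}} x_{C_{3-i}} \in I^3$, which together force the rigid structure $G = G' \sqcup C_1 \sqcup C_2$ with $a_i = 1$ on $C_1 \cup C_2$; once this is established, the remaining verification that $\reg I(K)^3 = 7$ is a direct application of Theorem~\ref{thm_mixed_sum} in the boundary case $|\b| = 6$. Minor care is needed to discard any isolated vertices created by passing to $G - j$ in Case~A or to the smaller graph underlying $L$ in Case~B, so as to remain within the standing convention of the paper.
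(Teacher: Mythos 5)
Your proposal is correct and follows essentially the same route as the paper: the chain of equalities via Lemma \ref{lem_colon_3} and Lemma \ref{extremal_red}, and the reverse inequality $\reg I^3 \le \reg \overline{I^3}$ by induction on $n$ using the decomposition $x^\a = x_{C_1}x_{C_2}\cdot f$ together with Lemma \ref{lem_inequality_special} and Theorem \ref{thm_mixed_sum}. The only (harmless) difference is that your Case~A/Case~B split is redundant: since $x^\a \notin I^3$ already forces $N[C_1\cup C_2]\cap\supp f=\emptyset$, the decomposition $\b=\mathbf{1}_{C_1\cup C_2}$, $\c=\a-\b$ is good regardless of whether $N[C_1\cup C_2]\subseteq\supp\a$, so the paper applies Lemma \ref{lem_inequality_special} uniformly (taking $K$ to be a two-edge induced matching rather than your triangle-plus-edge; both give $\reg I(K)^3=7=|\b|+1$).
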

\begin{proof} Let $J_1 \subseteq J_2$ be any two intermediate ideals in $\Inter(I^3, \overline{I^3})$. By Lemma \ref{lem_colon_3} and Lemma \ref{extremal_red}, $\reg J_2 \le \reg J_1.$ 

It remains to prove that $\reg I^3 \le \reg \overline{I^3}$. We prove by induction on $n = |V(G)|$. By Theorem \ref{expansion_edge}, we may assume that $n \ge 6$, as if $n \le 5$, then $\overline{I^3} = I^3$. Let $(\a,i)\in\NN^n\times\NN$ be an extremal exponent of $I^3$. If $x^\a \notin \overline{I^3}$, by Lemma \ref{lem_colon_3} and Lemma \ref{extremal_red}, $\reg I^3 \le \reg \overline {I^3}$. Now assume that $x^\a \in \overline{I^3}$. By Theorem \ref{expansion_edge}, there exist two triangles $C_1$, $C_2$ in $G$ such that $N[C_1] \cap C_2 = \emptyset$ and $x^\a = x_{C_1} \cdot x_{C_2} \cdot f$, for some monomial $f \in S$. Since $x^\a \notin I^3$, 
\begin{equation}\label{eq_3_1}
    N[C_1 \cup C_2] \cap \supp f = \emptyset.
\end{equation} 
Furthermore, $x_i \in \sqrt{I^3:x^\a}$ for all $i \in N[C_1 \cup C_2]$. Thus, the decomposition $x^\a = (x_{C_1} x_{C_2}) \cdot f$ is a good decomposition of $I^3$. Let $K$ be a maximum induced matching of $C_1 \cup C_2$. Then $\reg I(K)^3 = 7 =  |\b| + 1$. Then $L := I(H) + I(K)$ is the restriction of $I$ to $K \cup H$ which is a proper subset of $[n]$. By induction $\reg L^3 \le \reg \overline{L^3}$. By Lemma \ref{lem_inequality_special}, $\reg I^3 \le \reg \overline{I^3}.$ The conclusion follows.
\end{proof}

For the fourth power, we first prove a relation on radical of colon ideal. We also use the following notation. Let $\a \in \NN^n$ be an exponent and $V \subset [n]$. The restriction of $\a$ to $V$ is an exponent $\b$ such that $b_i = a_i$ if $i \in V$ and $b_i = 0$ if $i \notin V$. 

\begin{lem}\label{lem_colon_4} Let $J_1 \subseteq J_2$ be intermediate ideals in $\Inter(I^4, \overline{I^4})$. Let $\a$ be an exponent such that $x^\a \notin J_2$. Assume that $\sqrt{J_2 : x^\a} \neq \sqrt{J_1:x^\a}$. Let $f$ be a minimal generator of $\sqrt{J_2:x^\a}$ such that $f \notin \sqrt{J_1:x^\a}$. Then we must have

\begin{enumerate}
    \item There exist two triangles $C_1$, $C_2$ with $N[C_1] \cap C_2 = \emptyset$ such that $x^\a = C_1 \cdot C_2 \cdot x^\b$ where $1 \neq x^\b \notin I$.
    \item $\deg f = 1$ and $\supp f \notin \supp \a$.
\end{enumerate}
\end{lem}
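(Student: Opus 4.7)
The plan is to reduce to a specific minimal generator $P$ of $J_2$, classify the possibilities for $P$ using Theorem \ref{expansion_edge}, and then exploit the key radical-power bound coming from Lemma \ref{criterion_in_power} to narrow things down to exactly the configuration in the lemma. By Lemma \ref{radical_colon}, I write $f = \sqrt{P/\gcd(P, x^\a)}$ for some minimal generator $P$ of $J_2$. Since $f \notin \sqrt{J_1:x^\a}$ I cannot have $P \in J_1$, so in particular $P \notin I^4$. Theorem \ref{expansion_edge} then gives $P = x_{C_1} \cdots x_{C_{2a}} \cdot e_1 \cdots e_b$ with $2a \ge 2$ induced disjoint odd cycles (satisfying the nonadjacency property) and $\tfrac{1}{2}\sum |C_i| + b = 4$. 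Since every odd cycle has length at least $3$, this forces $2a = 2$ and leaves only two cases: Case (A), one triangle times one induced pentagon with $b=0$; or Case (B), two disjoint triangles times one edge $e$.

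Set $F := \supp f$. Since $f \notin \sqrt{J_1:x^\a} \supseteq \sqrt{I^4:x^\a}$, if $F$ were not independent then $f \in I$ would give $f^4 \in I^4$ and $f \in \sqrt{I^4:x^\a}$; so $F$ is independent, and Lemma \ref{criterion_in_power} yields the essential inequality
\[
\sum_{j \in N(F)} a_j + \ord_I\!\left(\prod_{u \notin N[F]} x_u^{a_u}\right) \le 3.
\]
Combined with the observation that $a_v \ge \deg_v P$ for every $v \in \supp P \setminus F$ (forced by $P \mid f^m x^\a$ for some $m$), this bound drives the entire subsequent analysis.

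Case (A) is ruled out by enumerating $F_1 := F \cap C_1$ (with $|F_1| \le 1$ since $C_1$ is a triangle) and $F_2 := F \cap C_2$ (with $|F_2| \le 2$ since $C_2$ is an induced pentagon). The point is that missing vertices of $C_2$ lie in $\supp \a$ with weight at least $1$ and contribute either through pentagon adjacency to $\sum_{j\in N(F)} a_j$, or through $\ord_I(x_{C_2}) = 2$ when $C_2 \subseteq V\setminus N[F]$; a direct check in every sub-case forces $\sum + \ord_I \ge 4$. In Case (B), I first rule out the configuration where $e$ joins $C_1$ to $C_2$ using $N(C_1) \cap C_2 = \emptyset$, and then rule out the configurations where $e$ shares one or two vertices with $C_1 \cup C_2$: in those subcases $P$ has a doubled vertex $v$ with $\deg_v P = 2$, forcing $a_v \ge 2$ whenever $v \notin F$, and a parallel case analysis using exact values of $\ord_I$ on products such as $x_v^2 x_{v'} x_{v''}$ closes the argument.

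Only the subcase $e = \{u_1, u_2\}$ disjoint from $C_1 \cup C_2$ remains, with $P$ squarefree and $F = \supp P \setminus \supp \a$. Writing $F = F_1 \sqcup F_2 \sqcup F_3$ according to the three components of $\supp P$, I show $F_1 = F_2 = \emptyset$: if $v \in F_1$, the other two vertices of $C_1$ push $\sum \ge 2$, the triangle $C_2 \subseteq V\setminus N[F]$ pushes $\ord_I \ge 1$, and the edge $e$ provides an additional unit (either to $\sum$ if $v$ is adjacent to an endpoint of $e$, or to $\ord_I$ via $\ord_I(x_{u_1}x_{u_2}) = 1$ when $e \subseteq V\setminus N[F]$), which violates the bound. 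Hence $F \subseteq \{u_1, u_2\}$; independence forces $|F| = 1$; and since $\deg_u P = 1$ for the unique $u \in F$, we obtain $a_u = 0$, yielding conclusion (2). Taking $C_1, C_2$ from $P$, the vertices of $C_1 \cup C_2$ then all lie in $\supp \a$ with positive weight, so $x_{C_1} x_{C_2} \mid x^\a$; the decomposition $x^\a = x_{C_1} x_{C_2} x^\b$ has $x^\b \ne 1$ because $a_{u'} \ge 1$ for the endpoint $u' \in \{u_1, u_2\}\setminus\{u\}$, and $x^\b \notin I$ follows from pushing the tightness of the displayed bound a little further to rule out any edge wholly inside $\supp \b$. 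The main obstacle is the tedious case analysis in Cases (A) and in subcases B1/B2 with doubled vertices: these force one to exhibit explicit edge decompositions in order to calibrate $\ord_I$ precisely rather than relying on the crude sub-additive estimate.
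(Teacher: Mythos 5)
Your overall strategy is the same as the paper's: write $f=\sqrt{P/\gcd(P,x^\a)}$ for a minimal generator $P\notin I^4$ of $J_2$, classify $P$ by Theorem \ref{expansion_edge} into (triangle)$\times$(pentagon) and (two triangles)$\times$(edge), and kill every configuration except the one where $e$ is disjoint from $C_1\cup C_2$ and $f$ is the single endpoint of $e$ missing from $\supp\a$. Where the paper packages the estimates as lower bounds on $\rp_\a(f)$ via Lemma \ref{lem_radical_power} (e.g.\ $\rp_\a(f)\ge \ell_1+\ell_2+1=4$ disposes of the $3$--$5$ case in one line, and an explicit monomial $x_7^2x_6^2x_4x_5x_1x_2\in I^4$ finishes the shared-vertex subcase), you work directly with the quantity $\sum_{j\in N(F)}a_j+\ord_I(\prod_{u\notin N[F]}x_u^{a_u})\le 3$ from Lemma \ref{criterion_in_power}. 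These are interchangeable, and your observation $a_v\ge\deg_vP$ for $v\in\supp P\setminus F$ is exactly the right input; the case analyses you sketch do close.

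There is, however, one step that does not work as you describe: the claim that $x^\b\notin I$ ``follows from pushing the tightness of the displayed bound a little further.'' It does not. Take $G$ consisting of two far-apart triangles $123$, $456$, an edge $78$, and an edge $79$ with $9$ adjacent only to $7$, and let $x^\a=x_1\cdots x_6x_7x_9$, $J_2=I^4+(x_1\cdots x_8)$, $J_1=I^4$. Then $f=x_8$ is a minimal generator of $\sqrt{J_2:x^\a}$ not in $\sqrt{J_1:x^\a}$, yet with $F=\{8\}$ one has $\sum_{j\in N(8)}a_j+\ord_I(x_1\cdots x_6x_9)=1+2=3$, so your bound is satisfied even though the edge $79$ lies wholly inside $\supp\b$. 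In fact in this example $x^\b=x_7x_9\in I$, so the assertion ``$x^\b\notin I$'' is simply not a consequence of the stated hypotheses ($x^\a\notin J_2$ alone): it needs the additional information $x^\a\notin\overline{I^4}$, since $x^\b\in I$ forces $x^\a=x_{C_1}x_{C_2}x^\b\in\overline{I^3}\cdot I\subseteq\overline{I^4}$. (The paper's own proof also elides this point, but the lemma is only ever invoked for conclusion (1) with $J_2=\overline{I^4}$, where this one-line argument is available.) You should replace the ``tightness'' argument by this integral-closure observation, and either add the hypothesis $x^\a\notin\overline{I^4}$ or note that conclusion (1) is only claimed in that situation.
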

\begin{proof} By Lemma \ref{radical_colon}, there exists a minimal generator $P$ of $J_2$ such that $f = \sqrt{P/\gcd(P,x^\a)}$. In particular, 
\begin{equation}\label{eq_3_2}
\supp f \subseteq \supp P \subseteq \supp f \cup \supp \a.
\end{equation}
Since $f \notin \sqrt{J_1 :x^\a}$, we have $P \notin I^4$ and $f \notin I$. By Theorem \ref{expansion_edge}, there are two cases.

\smallskip

\noindent\textbf{Case 1.} $P = x_{C_1} \cdot x_{C_2}$ where $C_1, C_2$ are disjoint cycles of length $3$ and $5$ respectively. By Lemma \ref{lem_radical_power} and \eqref{eq_3_2},
$$\rp_\a(f) \ge 4 \implies f \in \sqrt{I^4:x^\a} \subseteq \sqrt{J_1:x^\a}, \text{ a contradiction}.$$

\smallskip

\noindent\textbf{Case 2.} $P = x_{C_1} \cdot x_{C_2} \cdot x_e$, where $C_1, C_2$ are $3$-cycles such that $N[C_1] \cap C_2 = \emptyset$ and $e$ is an edge of $G$. Assume that $C_1 = 123$, $C_2 = 456$, and $e = uv$, note that $u,v$ might coincide with $\{1, ..., 6\}$. Since $f \notin I$, 
\begin{equation}\label{eq_3_3}
|\supp f \cap \{1,2,3\}| \le 1, |\supp f \cap \{4,5,6\}| \le 1, \text{ and } | \supp f \cap \{u,v\} | \le 1.
\end{equation}
Let $\b, \c$ be the restriction of $\a$ to $\{1,2,3\}$ and $\{4,5,6\}$ respectively. There are several subcases:

Subcase 2.a. $|\{u,v\} \cap \{1,2,3,4,5,6\} | = 2$. Since $N[123] \cap \{4,5,6\} = \emptyset$, we may assume that $u = 5$, $v = 6$. If $\supp f \cap \{1,2,3\} \neq \emptyset$, then $\rp_\b(f) \ge 2$, $\rp_\c(f) \ge 2$. If $\supp f \cap \{4,5,6\} \neq \emptyset$, then $\rp_\b(f) \ge 1$, $\rp_\c(f) \ge 3$. In either cases, $\rp_\a(f) \ge 4$, hence $f\in \sqrt{I^4:x^\a}$, a contradiction

Subcase 2.b. $|\{u,v\} \cap \{1,2,3,4,5,6\}| = 1$. We may assume that $u = 6$, $v = 7$. If $\supp f \cap \{1,2,3\} \neq \emptyset$, then $\rp_\b(f) \ge 2$, $\rp_\c(f) \ge 2$, hence $f \in \sqrt{I^4:x^\a}$, a contradiction. Thus, we must have $\supp f \cap \{1,2,3\} = \emptyset$. If $\supp f \cap \{4,5,6\} \neq \emptyset$, by \eqref{eq_3_3}, there are subcases as follows.

2.b.$\alpha$. If $4 \in \supp f$ or $5 \in \supp f$ then $6\in \supp \c$. Thus $\rp_\c(f) \ge 3$, hence $\rp_\a(f)  \ge 4$, a contradiction.

2.b.$\beta$. If $6 \in \supp f$, let $\mathbf{d}$ be the restriction of $\a$ to $\{4,5,7\}$. Then $\rp_{\mathbf{d}}(f) \ge 3$, hence $\rp_\a(f) \ge \rp_\b(f) + \rp_{\mathbf{d}} (f) \ge 4$, a contradiction. 

Thus, we must have $\supp f \cap \{1,..,6\} = \emptyset$. Therefore, $f = x_7$, and $x_1\cdots x_5 x_6^2 | x^\a$. But 
$$x_7^2 x_6^2 x_4x_5 x_1x_2 \in I^4 \implies f \in \sqrt{J_1:x^\a}, \text{ a contradiction}.$$

Subcase 2.c. $|\{u,v\} \cap \{1,2,3,4,5,6\} | = 0$. We may assume that $u = 7, v=8$. Let $\mathbf{d}$ be the restriction of $\a$ to $\{7,8\}$. If $\supp f \cap \{1,2,3\} \neq \emptyset$, then $\rp_\b(f) \ge 2$. If $\supp f \cap \{4,5,6\} \neq \emptyset$, then $\rp_\c(f) \ge 2$. In either cases, $\rp_\a(f) \ge \rp_\b(f) + \rp_\c(f) + \rp_{\mathbf{d}}(f) \ge 2 + 1 + 1 = 4$, a contradiction. Therefore, $\supp f \cap \{1, ..., 6\} = \emptyset$. Thus $\supp f \subseteq \{7,8\}$. Since $f \notin I$, $\deg f = 1$. We may assume that $f = x_8$. Hence $x_1\cdots x_7 | x^\a$. Furthermore, $8 \notin \supp \a$, as $f = \sqrt{P/\gcd(P,x^\a)}$.

The conclusion follows.
\end{proof}

\begin{lem}\label{fourth_in} Let $I = I(G)$ be the edge ideal of a simple graph $G$. Let $J_1 \subseteq J_2$ be intermediate ideals in $\Inter(I^4, \overline{I^4})$. Then 
$$\reg J_2 \le \reg J_1.$$
\end{lem}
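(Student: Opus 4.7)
The plan is to induct on the number of vertices $n = |V(G)|$ of $G$, using Lemma \ref{lem_colon_4} in the fourth-power setting exactly as Lemma \ref{lem_colon_3} was used for third powers in Theorem \ref{third_pow}. For the base case one takes $n$ small enough to force $\overline{I^4} = I^4$ (for instance when $G$ contains no pair of disjoint odd cycles whose total length is at most $8$, as in Theorem \ref{expansion_edge}); then $\Inter(I^4, \overline{I^4}) = \{I^4\}$ and the inequality is trivial.

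For the inductive step, pick an extremal exponent $(\a, i)$ of $J_2$. If $\sqrt{J_2 : x^\a} = \sqrt{J_1 : x^\a}$, then $\Delta_\a(J_2) = \Delta_\a(J_1)$ by Lemma \ref{Key1}, and Lemma \ref{extremal_red} immediately gives $\reg J_2 \le \reg J_1$. Otherwise, Lemma \ref{lem_colon_4}(2) supplies a variable $x_t$ with $x_t \in \sqrt{J_2 : x^\a}$ and $t \notin \supp \a$. Since $(\a, i)$ is extremal for $J_2$, Lemma \ref{red0} yields $\reg J_2 = \reg(J_2, x_t)$, while Lemma \ref{restriction_in} yields $\reg(J_1, x_t) \le \reg J_1$. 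It therefore suffices to prove $\reg(J_2, x_t) \le \reg(J_1, x_t)$.

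Set $V = [n] \setminus \{t\}$ and $G' = G \setminus \{t\}$, so $I(G') = I_V$. For $k = 1, 2$ one has $J_k + (x_t) = (J_k)_V + (x_t)$, and since $x_t$ is a nonzerodivisor modulo $(J_k)_V$, $\reg(J_k, x_t) = \reg (J_k)_V$. By Corollary \ref{restriction_intermediate}, $(J_k)_V$ lies in $\Inter(I(G')^4, \overline{I(G')^4})$, and $G'$ has only $n - 1$ vertices, so the inductive hypothesis yields $\reg (J_2)_V \le \reg (J_1)_V$. Stringing the inequalities together produces $\reg J_2 \le \reg J_1$, as desired.

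The genuine work of this lemma has already been absorbed by Lemma \ref{lem_colon_4}: the nontrivial structural fact is that any new generator of $\sqrt{J_2 : x^\a}$ not already in $\sqrt{J_1 : x^\a}$ must be a single variable disjoint from $\supp \a$, which is precisely the shape required to feed into Lemma \ref{red0}. Without that structural information, one could not kill the variable $x_t$ in $J_2$ while preserving regularity, nor maintain the intermediate-ideal structure on passage to $G'$. The only minor subtlety is that $G'$ may have isolated vertices, but these do not affect any of the ideals or their regularities, so the induction runs through without incident.
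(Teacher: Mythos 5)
Your proposal is correct and follows essentially the same route as the paper's proof: induction on $|V(G)|$, reduction via Lemma \ref{extremal_red} to the case $\Delta_\a(J_2)\neq\Delta_\a(J_1)$, extraction of a degree-one generator $x_t\notin\supp\a$ from Lemma \ref{lem_colon_4}, and the chain $\reg J_2=\reg(J_2,x_t)=\reg (J_2)_V\le\reg (J_1)_V=\reg(J_1,x_t)\le\reg J_1$ using Lemma \ref{red0}, Corollary \ref{restriction_intermediate}, and Lemma \ref{restriction_in}. The explicit base case and the remark about isolated vertices are harmless additions; nothing essential differs.
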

\begin{proof} We prove by induction on $n = |V(G)|$. Let $(\a,i)$ be an extremal exponent of $J_2$. By Lemma \ref{extremal_red}, we may assume that $\Delta_\a(J_2) \neq \Delta_\a(J_1)$. By Lemma \ref{lem_colon_4}, there exists $f \in \sqrt{J_2 : x^\a}$ such that $\deg f = 1$ and $\supp f \notin \supp \a$. Let $K_1, K_2$ be the restriction of $J_1, J_2$ to $V = [n] \setminus \supp f$. By Corollary \ref{restriction_intermediate}, $K_1 \subseteq K_2$ are intermediate ideals in $\Inter(I_V^4,\overline{I_V^4})$. By induction $\reg K_2 \le \reg K_1$. By Lemma \ref{red0} and Lemma \ref{restriction_inq},
$$\reg J_2 = \reg (J_2,f) = \reg(K_2,f) \le \reg(K_1,f) = \reg(J_1,f) \le \reg J_1.$$
The conclusion follows.
\end{proof}

To prove the reverse inequality $\reg(S/I^4) \le \reg (S/ \overline {I^4})$, we also use induction on $n=|V(G)|$. For simplicity of exposition, throughout the rest of this section, we always assume that $(\a,i)\in\NN^n\times\NN$ is an extremal exponent of $I^4$, where $I=I(G)$. It is clear that $x^\a\notin I^4$. By Lemma \ref{extremal_red}, it suffices to consider the cases $x^\a\notin \overline{I^4}$ with $\D_\a(I^{4})\ne\D_\a(\overline{I^4})$ or $x^\a\in \overline{I^4}$. We will see frequently that $\a$ has a good decomposition $\a = \b + \c$. As in the proof of Theorem \ref{third_pow}, we then take $K$ to be a maximum induced matching of $G$ restricted to $\supp \b$. It is fairly simple to show that all the assumptions of Lemma \ref{lem_inequality_special} hold, hence $\reg I^4 \le \reg \overline{I^4}$. Thus, once we show that $\a$ has a good decomposition, we conclude our argument. We treat each form of $\a$ in the following lemmas.

\begin{lem}\label{lem_4_1} Assume that there exist two induced cycles $C_1$, $C_2$ with $N[C_1] \cap C_2 = \emptyset$ of length $3$ and $5$ respectively, such that $x_{C_1}x_{C_2} | x^\a$. Then $\reg I^4 \le\reg \overline {I^4}.$
\end{lem}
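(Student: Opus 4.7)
The plan is to exhibit a good decomposition $\a = \b + \c$ (in the sense preceding Lemma \ref{lem_inequality_special}) with $x^\b = x_{C_1} x_{C_2}$, and then to invoke Lemma \ref{lem_inequality_special} with a suitable induced submatching $K \subsetneq C_1 \cup C_2$.

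Concretely, let $\b$ be the characteristic vector of $C_1 \cup C_2$ so that $x^\b = x_{C_1} x_{C_2}$ and $|\b| = 8$, and put $\c = \a - \b$. The first task is to verify that $\c \in \NN^n$ and $\supp \c \cap N[C_1 \cup C_2] = \emptyset$. Both assertions reduce to showing that no vertex $v \in N[C_1 \cup C_2]$ can satisfy $x_v \cdot x_{C_1} x_{C_2} \mid x^\a$, since otherwise the extremality of $\a$ would be violated via the inclusion $x_v x_{C_1} x_{C_2} \in I^4$. I split into four cases. If $v \in C_1$, then $x_v x_{C_1} = x_v^2 x_{C_1 \setminus v}$ is a product of two edges of the triangle, so $x_v x_{C_1} \in I^2$; since $x_{C_2} \in I^2$ (the $5$-cycle $1\text{-}2\text{-}3\text{-}4\text{-}5\text{-}1$ factors as $(x_1 x_2)(x_3 x_4) \cdot x_5$), we get $x_v x_{C_1} x_{C_2} \in I^4$. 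If $v \in N(C_1) \setminus C_1$ is adjacent to some $j \in C_1$, then $x_v x_{C_1} = (x_v x_j)(x_{C_1 \setminus j}) \in I^2$ and the same conclusion follows. If $v \in C_2$, relabel so that $v = 1$ in $C_2 = 1\text{-}2\text{-}3\text{-}4\text{-}5\text{-}1$; then $x_v x_{C_2} = x_1^2 x_2 x_3 x_4 x_5 = (x_1 x_2)(x_1 x_5)(x_3 x_4) \in I^3$, hence $x_v x_{C_1} x_{C_2} \in I^4$. Finally, if $v \in N(C_2) \setminus C_2$ is adjacent to some $j \in C_2$, taking $j = 1$ we get $x_v x_{C_2} = (x_v x_1)(x_2 x_3)(x_4 x_5) \in I^3$, which again forces $x_v x_{C_1} x_{C_2} \in I^4$.

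These same computations establish $\rp_\b(x_v) \ge 4$ for every $v \in N[C_1 \cup C_2]$, so $x_v \in \sqrt{I^4 : x^\b} \subseteq \sqrt{I^4 : x^\a}$; for $v \in C_1 \cup C_2$ this can alternatively be deduced from Lemma \ref{lem_radical_power}(4) with $\ell_1 = 1$, $\ell_2 = 2$. Hence $\a = \b + \c$ is a good decomposition of $\a$ with respect to $I^4$. To apply Lemma \ref{lem_inequality_special}, compute $\ord_I(x^\b) = \ord_I(x_{C_1}) + \ord_I(x_{C_2}) = 1 + 2 = 3$, so $t = s - \ord_I(x^\b) = 1$. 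Choose edges $e_1 \subset C_1$ and $e_2 \subset C_2$, and let $K$ be the restriction of $G$ to $V(e_1) \cup V(e_2) \subsetneq C_1 \cup C_2$. Because $N[C_1] \cap C_2 = \emptyset$, $K$ is a disjoint union of two edges, and the standard regularity formula for edge ideals of matchings (derivable by iterating Theorem \ref{thm_mixed_sum}) gives
\[
\reg I(K)^{s - t + 1} = \reg I(K)^4 = 2 \cdot 4 + 2 - 1 = 9 = |\b| + 1.
\]
Setting $H$ to be the restriction of $G$ to $[n] \setminus N[C_1 \cup C_2]$ and $L = I(H) + I(K)$, the graph on which $L$ is supported has at most $n - 4 < n$ vertices. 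By the induction hypothesis on $|V(G)|$, $\reg L^4 \le \reg \overline{L^4}$, and Lemma \ref{lem_inequality_special} then yields $\reg I^4 \le \reg \overline{I^4}$.

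The main technical obstacle is the good-decomposition verification in the second paragraph. The most delicate subcase is $v \in C_2$: one must decompose $x_v x_{C_2}$, the product of a $5$-cycle with one vertex doubled, into three pairwise disjoint edges of $G$. Once that factorization is in hand, the remaining cases fall into line and the framework of Lemma \ref{lem_inequality_special} takes over.
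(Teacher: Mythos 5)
Your proof is correct and follows the paper's argument exactly: you take $\b$ to be the characteristic vector of $C_1\cup C_2$, verify that $x^\a=(x_{C_1}x_{C_2})\cdot f$ is a good decomposition (the paper asserts this in one line using $x^\a\notin I^4$; you supply the explicit edge factorizations), and then invoke Lemma \ref{lem_inequality_special} with $K$ a size-two induced matching inside $C_1\cup C_2$, exactly as the paper's framework paragraph prescribes. No gaps.
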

\begin{proof} Write $x^\a = x_{C_1} x_{C_2} \cdot f$, where $f$ is a monomial in $S$. Since $x^\a\notin I^4$, 
\begin{equation}\label{eq_4_1}
    N[C_1 \cup C_2] \cap \supp (f)  =\emptyset.
\end{equation} 
Furthermore, $x_i \in \sqrt{I^4:x^\a}$ for all $i \in N[C_1 \cup C_2]$. Thus the decomposition $x^\a = (x_{C_1}x_{C_2}) \cdot f$ is a good decomposition. Hence, $\reg I^4 \le \reg \overline{I^4}$.
\end{proof}

\begin{lem}\label{lem_4_2} Assume that there exist two triangles $C_1, C_2$ with $N[C_1] \cap C_2 = \emptyset$ and an edge $e$ such that $x_{C_1}x_{C_2}x_e | x^\a$. Then $\reg(I^4) \le \reg \overline{I^4}.$
\end{lem}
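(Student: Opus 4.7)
The plan is to exhibit a good decomposition $\a = \b + \c$ satisfying the hypotheses of Lemma~\ref{lem_inequality_special}, and then apply that lemma with $K$ the two-edge matching $\{1,2\}\cup\{4,5\}$ lying inside $\supp\b$. Write $e = \{u,v\}$ and set $W = C_1 \cup C_2 \cup \{u,v\}$, which has size $6$, $7$, or $8$ depending on $|\{u,v\} \cap (C_1\cup C_2)|$; accordingly I split into three subcases. In Subcases A ($|W|=6$) and B ($|W|=7$) I set $\supp\b = W$, while in Subcase C ($|W|=8$, with $e$ disjoint from $C_1\cup C_2$) I set $\supp\b = C_1 \cup C_2$. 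The reason for the asymmetry is that in Subcase C, any extra factor $x_u^m$ in $x_u^m x^\a$ fails to create a new edge, so $x_u$ cannot lie in $\sqrt{I^4 : x^\a}$; hence $u,v$ must remain in $\supp\c$, and this is compatible with Condition~(2) because $u,v$ turn out not to be adjacent to $C_1 \cup C_2$.

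Condition~(1) of Lemma~\ref{rad_good_colon} I would verify by direct edge counting: for $i \in N[\supp\b]$, the monomial $x_i \cdot x_{C_1} x_{C_2} x_e$ admits four pairwise-disjoint edges (either $i$ doubles a triangle vertex, producing two edges from that triangle, or $i$ pairs with a triangle vertex from outside, still yielding two edges from that triangle; the remaining triangle and $e$ each contribute one edge). Condition~(2) I would prove by contradiction: if some $i \in \supp\a \setminus \supp\b$ were adjacent to $\supp\b$, the same counting shows $x_i \cdot x_{C_1} x_{C_2} x_e \in I^4$, and since $x^\a \supseteq x_i \cdot x_{C_1} x_{C_2} x_e$ (using $a_i \ge 1$ and the divisibility hypothesis), this forces $x^\a \in I^4$, a contradiction. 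It follows that $\supp\b$ and $\supp\c$ carry no cross-edges of $G$, whence $\ord_I(x^\a) = \ord_I(x^\b) + \ord_I(x^\c)$. Combining this with the lower bound $\ord_I(x^\b) \ge \ord_I\bigl(x_{C_1} x_{C_2} x_e\big|_{\supp\b}\bigr)$ (which equals $3$ in Subcases A, B and $2$ in Subcase C) and the upper bound $\ord_I(x^\a) \le 3$, one pins $\ord_I(x^\b)$ to its lower bound, and the rigidity in turn forces the shape of $\b$: $|\b| = 8$ in Subcases A and B, and $|\b| = 6$ in Subcase C.

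Finally, with $K = \{1,2\} \cup \{4,5\}$, the induced subgraph of $G|_{\supp\b}$ on these four vertices is two disjoint edges by $N[C_1] \cap C_2 = \emptyset$; by Theorem~\ref{thm_mixed_sum} applied twice to a single edge, $\reg I(K)^m = 2m + 1$ for $m \ge 1$. With $t = 4 - \ord_I(x^\b)$, one computes $\reg I(K)^{s-t+1} = 2\ord_I(x^\b) + 3 = |\b| + 1$ in each subcase, meeting the numerical hypothesis of Lemma~\ref{lem_inequality_special}. Moreover $L = I(H) + I(K)$ lives on $V(K) \cup V(H) \subsetneq [n]$ (missing $3$, $6$, and possibly more), so by the induction hypothesis on $|V(G)|$, $\reg L^4 \le \reg\overline{L^4}$. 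Lemma~\ref{lem_inequality_special} then yields $\reg I^4 \le \reg\overline{I^4}$, as desired. The main technical obstacle is the $\ord_I$ accounting in Subcase B, where $e$ shares a vertex with $C_2$: one must verify that any enlargement of $\b$ beyond the minimum forces $\ord_I(x^\b) \ge 4$, which would contradict $x^\a \notin I^4$.
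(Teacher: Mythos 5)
Your overall strategy --- exhibit a good decomposition $\a=\b+\c$ and feed it to Lemma \ref{lem_inequality_special} with $K$ an induced matching inside $\supp\b$ --- is exactly the paper's, and your Subcases A and B, together with the part of Subcase C where $e$ has no neighbour in $C_1\cup C_2$, go through essentially as in the paper. But Subcase C contains a genuine gap. Write $C_1=123$, $C_2=456$, $e=\{7,8\}$ with $\{7,8\}\cap\{1,\dots,6\}=\emptyset$. Nothing prevents $67\in E(G)$, so your claim that ``$u,v$ turn out not to be adjacent to $C_1\cup C_2$'' is false in general; in that configuration $7\in\supp\c$ lies in $N[\supp\b]=N[\{1,\dots,6\}]$ and Condition (2) of Lemma \ref{rad_good_colon} fails for $\supp\b=C_1\cup C_2$. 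Your contradiction argument does not rescue this: for $i=7$ the monomial $x_i\cdot x_{C_1}x_{C_2}x_e=x_1\cdots x_6x_7^2x_8$ needs $a_7\ge 2$ to divide $x^\a$, whereas the hypothesis only gives $a_7\ge 1$; in fact, since $(x_6x_7)(x_7x_8)(x_1x_2)(x_4x_5)\in I^4$, the assumption $x^\a\notin I^4$ forces exactly $a_7=1$, which is the case your counting cannot reach. (For the same reason your assertion that $x_u$ cannot lie in $\sqrt{I^4:x^\a}$ is also wrong here: $x_7x^\a\in I^4$.)

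This is precisely where the paper works hardest. It first deduces $N[\{1,2,3\}]\cap(\{8\}\cup\supp f)=\emptyset$ from $x^\a\notin I^4$, and then proves $7\notin N(\{1,2,3\})$ by contradiction: assuming otherwise, it picks (via Remark \ref{rem_mingens_degree_complex}) a minimal generator $g$ of $\sqrt{I^4:x^\a}$ with $x_8\mid g$, shows $N(\supp g)\cap\supp\a=\{7\}$, and derives $\sum_{j\in N(\supp g)}a_j+\ord_I\bigl(\prod_{u\notin N[\supp g]}x_u^{a_u}\bigr)\le 3$, contradicting Lemma \ref{criterion_in_power}. Only then does a good decomposition appear, and with a different $\b$ than yours: $\supp\b=\{1,2,3\}$, a single triangle, so $\ord_I(x^\b)=1$, $t=3$, and $K$ is a single edge with $\reg I(K)^2=4=|\b|+1$. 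You would need to import this extra argument (or an equivalent one) to close Subcase C; as written, the step ``Condition (2) holds'' is unproved in exactly the configuration that makes the lemma nontrivial. A minor remark: you flag Subcase B as the main obstacle, but B (where $e$ shares a vertex with a triangle) closes cleanly with $x^\b=x_1\cdots x_5x_6^2x_7$, matching the paper's first case; the real difficulty is the vertex-disjoint-but-adjacent configuration in C.
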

\begin{proof} Without loss of generality, we may assume that $x^\a = x_1 \cdots x_6 x_ux_v \cdot f$, where $C_1=123$ and $C_2=456$ are triangles with $N[C_1] \cap C_2 = \emptyset$, $uv$ is an edge of $G$ and $f$ is a monomial in $S$. Note that $u,v$ might belong to $\{1, ..., 6\}$. Since $x_i(x_1\cdots x_6) \in I^3$ for all $i \in N[\{1, ..., 6\}]$, and $x^\a \notin I^4$, we have 
\begin{equation}\label{eq_4_2_1}
    x_i \in I^4:x^\a \text{ for all } i \in N[\{1, ...,6\}] \implies N[\{1,...,6\}] \cap \supp f = \emptyset.
\end{equation}

Assume first that $\{u,v\} \cap \{1, ..., 6\} \neq \emptyset$. Say $u = 6$. If $i \in N(v)$, then 
$$(x_1x_2)(x_4x_6)(x_5x_6)(x_vx_i)x_3 \in I^4 \implies x_i \in I^4:x^\a.$$
Together with \eqref{eq_4_2_1} and the fact that $x^\a \notin I^4$, we have
\begin{equation}
    x_i \in I^4:x^\a \text{ for all } i \in N[\supp \b] \implies N[\supp \b ] \cap \supp f = \emptyset,
\end{equation}
where $x^\b = x_1\cdots x_5 \cdot x_6^2 \cdot x_v$. Thus the decomposition $x^\a = x^\b \cdot f$ is a good decomposition, hence $\reg I^4 \le \reg \overline{I^4}$.

We now assume that $\{u,v\} \cap \{1,...,6\} = \emptyset$. Assume that $u = 7, v= 8$. There are two cases.

\smallskip
\noindent\textbf{Case 1.} $N(\{1,...,6\}) \cap \{7,8\}  = \emptyset$. By \eqref{eq_4_2_1}, the decomposition $x^\a = (x_1\cdots x_6) \cdot (x_7x_8 f)$ is a good decomposition, hence $\reg I^4 \le \reg \overline{I^4}$. 

\smallskip

\noindent\textbf{Case 2.} $N(\{1,...,6\}) \cap \{7,8\} \neq \emptyset$. We may assume that $67 \in G$. Since $x^\a \notin I^4$, we deduce that
\begin{equation}\label{eq_4_2_2}
N(\{1,2,3\}) \cap (\{8\} \cup \supp f) = \emptyset.
\end{equation}
We claim that $7 \notin N(\{1,2,3\})$. Assume by contradiction that $7 \in N(\{1,2,3\})$. Since $x^\a \notin I^4$, $a_7 = 1$ and $x_8 \notin N(\{1, ..., 6\})$. Furthermore, 
\begin{equation}\label{eq_4_2_4}
    x_i \in I^4:x^\a \text{ for all } i \in N(\{8\} \cup \supp f).
\end{equation}
By Remark \ref{rem_mingens_degree_complex}, there exists a minimal generator $g$ of $\sqrt{I^4:x^\a}$ such that $x_8 | g$. By \eqref{eq_4_2_1} and \eqref{eq_4_2_4}, $\supp g$ is an independent set and
$$\supp g \cap N(\supp \a \setminus \{7\}) = \emptyset \implies N(\supp g) \cap \supp \a = \{7\}.$$
Let $h = \prod_{u \notin N[\supp g]} x_u^{a_u}.$ Since $8\in \supp g$, $7 \in N(\supp g)$, hence $h | x_1\cdots x_6 \cdot  f$ has $\ord_I(h) \le 2$. Thus
$$\sum_{j \in N(\supp g)} a_j + \ord_I(h) \le 3,$$
a contradiction to Lemma \ref{criterion_in_power}. 

Thus, $7 \notin N(\{1,2,3\})$. By \eqref{eq_4_2_1} and \eqref{eq_4_2_2} the decomposition $x^\a = (x_1x_2x_3) \cdot (x_4\cdots x_8 \cdot f)$ is a good decomposition. Hence, $\reg I^4 \le \reg \overline{I^4}.$
\end{proof}

Recall that $(\a,i)$ is an extremal exponent of $I^4$. We now fix a face $F\in \D_{\a}(I^4)$ such that $F\cap\supp(\a)=\emptyset$ and $\h_{i-1}(\lk_{\D_{\a}(I^4)} F;\k)\ne 0$. 
\begin{lem}\label{lem_4_3} Assume that $x^\a\notin \overline{I^4}$ and $\Delta_\a(I^4) \neq \Delta_\a(\overline{I^4})$. Then $\reg I^4 \le \reg \overline{I^4}$.
\end{lem}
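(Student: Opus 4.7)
The plan is to produce a good decomposition $\a = \b + \c$ (in the sense of the definition following Lemma \ref{rad_good_colon}) and then invoke Lemma \ref{lem_inequality_special}, mirroring the strategy of Lemmas \ref{lem_4_1} and \ref{lem_4_2}.

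First, I apply Lemma \ref{lem_colon_4} with $J_1 = I^4$ and $J_2 = \overline{I^4}$. The hypotheses hold: $x^\a \notin \overline{I^4}$ is given, and $\Delta_\a(I^4) \neq \Delta_\a(\overline{I^4})$ translates via Lemma \ref{Key1} to $\sqrt{\overline{I^4}:x^\a} \neq \sqrt{I^4:x^\a}$. Lemma \ref{lem_colon_4} then delivers disjoint triangles $C_1 = \{1,2,3\}$ and $C_2 = \{4,5,6\}$ with $N[C_1] \cap C_2 = \emptyset$ such that $x^\a = x_{C_1} x_{C_2} g$ for some monomial $g \neq 1$ with $\supp g$ independent in $G$, together with a variable $x_t \in \sqrt{\overline{I^4} : x^\a} \setminus \sqrt{I^4 : x^\a}$ for some $t \notin \supp \a$. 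Expanding $x_t^M x^\a \in \overline{I^4}$ via Theorem \ref{expansion_edge} and using that $\supp g$ contains no edge of $G$ while $x^\a \notin \overline{I^4}$, any minimal generator of $\overline{I^4}$ dividing $x_t^M x^\a$ must involve $x_t$ inside an edge or cycle factor. A short case analysis on the three shapes in Theorem \ref{expansion_edge} yields a ``linking edge'' $\{t, j\}$ of $G$ with $j \in \supp \a$.

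Next, I propose the good decomposition $\a = \b + \c$ with $\supp \b$ equal to the restriction of $\a$ to $\{1, \ldots, 6\}$, iteratively enlarged if necessary to absorb vertices of $\supp \a$ lying in $N(C_1 \cup C_2)$ so that condition (2) of Lemma \ref{rad_good_colon} holds by construction. For condition (1), I verify that $x_i \in \sqrt{I^4:x^\a}$ for every $i \in N[\supp \b]$ by exhibiting a 4-edge cover of $x_i^m x^\a$: the two triangles $C_1$ and $C_2$ contribute three edges with a repeated triangle-vertex (via $x_i^2$ for $i$ a vertex of a triangle), and the fourth edge comes either from an edge of $G$ joining $\supp g$ to $C_1 \cup C_2$ (if present), or from the linking edge $\{t, j\}$ together with the extra factor $x_t$ made available by $x_t^M x^\a \in \overline{I^4}$.

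Finally, with the good decomposition in place, I apply Lemma \ref{lem_inequality_special}. In the basic case $\b = (1,1,1,1,1,1,0,\ldots,0)$, we have $|\b| = 6$ and $\ord_I(x^\b) = 2$, so $s - t + 1 = 3$; taking $K$ to be the induced subgraph of $G$ on $\{1,2,3,4,5\}$ (the disjoint union of the triangle $C_1$ and the edge $\{4,5\}$), Theorem \ref{thm_mixed_sum} gives
\[
\reg I(K)^{3} \;\ge\; \reg I(C_1)^{1} + \reg I(\{4,5\})^{3} - 1 \;=\; 2 + 6 - 1 \;=\; 7 \;=\; |\b| + 1,
\]
as required. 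By the induction hypothesis on $|V(G)|$, the strictly smaller graph underlying $L = I(H) + I(K)$ satisfies $\reg L^4 \le \reg \overline{L^4}$, and Lemma \ref{lem_inequality_special} delivers $\reg I^4 \le \reg \overline{I^4}$. The hardest step will be verifying condition (1) of the good decomposition in the remaining subcases — when some $a_i \ge 2$ for $i \in \{1,\ldots,6\}$, or when $\supp g$ meets $N(C_1 \cup C_2)$ — which force a larger $\b$ and a correspondingly larger proper subset for $K$; the independence of $\supp g$, together with the linking edge $\{t, j\}$, keeps this case analysis bounded.
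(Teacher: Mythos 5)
There is a genuine gap: your strategy of always producing a good decomposition cannot work, because in the hardest case no good decomposition exists. By Lemma \ref{lem_colon_4} you have $x^\a = x_1\cdots x_6\cdot g$ with $\supp g$ an independent set, so $\ord_I(x^\a)=2$ and for $i \in N[\{1,\ldots,6\}]$ you only get $x_i\, x^\a \in I^3$, not $I^4$. Your ``4-edge cover'' of $x_i^m x^\a$ needs a fourth edge supported inside $\supp\a\cup\{i\}$; when $\supp g\cap N[\{1,\ldots,6\}]=\emptyset$ and the vertices of $\supp g$ have pairwise disjoint neighbourhoods, no such edge exists. In particular the linking edge $\{t,j\}$ is useless here: $t\notin\supp\a$, so $x_t$ is not available as a factor of $x_i^m x^\a$, and the relation $x_t^M x^\a\in\overline{I^4}$ says nothing about membership of $x_i^m x^\a$ in the ordinary power $I^4$. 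Concretely, in this case one can check (as the paper does in \eqref{eq_4_3_8}) that $\sqrt{I^4:x^\a}$ equals $I$ plus products of \emph{pairs} of linear forms from distinct neighbourhoods --- it contains no single variable at all --- so condition (1) of Lemma \ref{rad_good_colon} fails for every nonzero $\b$, and Lemma \ref{lem_inequality_special} is not applicable.

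The paper therefore splits off exactly this case and treats it by a different argument: after reducing (via Lemma \ref{red0} and induction) to the situation where all relevant neighbourhoods are pairwise disjoint, it computes $\sqrt{I^4:x^\a}$ explicitly, uses Mayer--Vietoris on a decomposition of $\Delta_\a(I^4)$ into cones to force first $a_7=\cdots=a_u=1$ and then $x^\a=x_1\cdots x_7$, bounds $\reg I^4\le\reg I(H)+7$, and finally derives a contradiction with Lemma \ref{restriction_inq} via Theorem \ref{thm_mixed_sum}. Your treatment of the cases that do admit a good decomposition (e.g.\ $|\supp g\cap\{1,\ldots,6\}|=1$, or $7\in\supp g$ with $67\in G$ and $7\notin N(\{1,2,3\})$) is essentially the paper's, except that even there $\b$ must be supported on a single triangle such as $\{1,2,3\}$ rather than on all of $\{1,\ldots,6\}$: for $i\in N(\{4,5,6\})\setminus N(\{1,2,3\})$ one cannot in general produce a fourth edge, so condition (1) also fails for your enlarged $\b$. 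The numerical verification $\reg I(K)^3\ge 7$ is correct, but it does not rescue the missing case.
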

\begin{proof} By Lemma \ref{lem_colon_4}, there exist two triangles $123$ and $456$ with $N[\{1,2,3\}] \cap \{4,5,6\} = \emptyset$ and a monomial $f \neq 1$ such that $\supp f$ is an independent set and $x^\a = x_1 \cdots x_6 \cdot f$. Since $x^\a \notin \overline{I^4}$, we have $|\supp f \cap \{1,...,6\} | \le 1$. First assume that $|\supp f \cap \{1,...,6\} | = 1$, say $x_6 |f$. Write $x^\a = x_1 \cdots x_5 \cdot x_6^t \cdot g$, where $ t \ge 2$. Since $x^\a \notin I^4$, 
\begin{equation}\label{eq_4_3_1}
    x_i \in I^4:x^\a \text{ for all } i \in N[\{1,2,3\}] \implies N[\{1,2,3\}] \cap \supp g = \emptyset.
\end{equation}
Thus the decomposition $x^\a = (x_1x_2x_3) \cdot (x_4x_5x_6^tg)$ is a good decomposition. Hence, $\reg I^4 \le \reg \overline{I^4}$.

Thus, we may assume that $x^\a = x_1 \cdots x_6 \cdot x_7^{a_7} \cdots x_u^{a_u}$, where $a_7, ..., a_u > 0$. There are two cases.

\smallskip

\noindent\textbf{Case 1.} $N(\{1,...,6\}) \cap \{7, ...,u\} \neq \emptyset$. Assume that $67 \in G$. Since $x^\a \notin I^4$, we have 
\begin{equation}\label{eq_4_3_2}
    x_i \in I^4:x^\a \text{ for } i \in N[\{1,2,3\}] \implies N[\{1,2,3\}] \cap \{8, ..., u\} = \emptyset.
\end{equation}
There are two subcases:

Subcase 1.a. $7 \in N(\{1,2,3\})$. Since $x^\a \notin I^4$, $a_7 = 1$. With argument similar to \textbf{Case 2} in the proof of Lemma \ref{lem_4_2}, we see that if $u > 7$, then $\Delta_\a(I^4)$ is a cone over $u$, which is a contradiction. Thus, $x^\a = x_1 \cdots x_7$. Then
$$\sqrt{I^4:x^\a} = (x_i \mid i \in N[\{1,...,6\}]) + I(H),$$
where $H$ is the restriction of $G$ to $[n] \setminus N[\{1,...,6\}]$. Thus $\Delta_\a(I^4) = \Delta(I(H))$. By Lemma \ref{Key0}, $\reg I^4 \le \reg I(H)  + 7$. Let $L= I(H) + K$ be the restriction of $I$ to $[n] \setminus N[\{1,...,6\}] \cup  \{1,2,4,5\}$, where $K = (x_1x_2,x_4x_5)$. Then by induction, $\reg L^4 \le \reg \overline{L^4}$. By Theorem \ref{thm_mixed_sum}, we have 
$$\reg L^4 \ge \reg I(H) + \reg K^4 - 1 = \reg I(H) + 8 > \reg I^4,$$
which is a contradiction to Lemma \ref{restriction_inq}.

Subcase 1.b. $7 \notin N(\{1,2,3\})$. By \eqref{eq_4_3_2}, the decomposition $x^\a = (x_1x_2x_3) \cdot (x_4x_5x_6 x_7^{a_7}\cdots x_u^{a_u})$ is a good decomposition. Hence $\reg I^4 \le \reg \overline{I^4}$.

\smallskip

\noindent\textbf{Case 2.}  $N(\{1,...,6\}) \cap \{7, ...,u\} = \emptyset$. If either of the intersections $N[\{1,2,3\}] \cap N[\{4,5,6\}], N(\{1,...,6\}) \cap N(\{7, ..., u\}), N(i) \cap N(j)$ for $i \neq j \in \{7,...,u\}$ is non-empty, say $r$ belongs to one of them, then $x_r \in \sqrt{I^4:x^\a}$ and $r \notin \supp \a$. Let $J$ be the restriction of $I$ to $[n] \setminus \{r\}$. By Lemma \ref{restriction_inq}, Lemma \ref{red0}, and induction,
$$\reg I^4 = \reg (I^4,x_r) = \reg (J^4,x_r) \le \reg (\overline{J^4},x_r) \le \reg \overline{I^4}.$$
Thus, we may assume that 
\begin{equation}\label{eq_4_3_3}
    N[123] \cap N[456] = \emptyset, N[\{1,...,6\}] \cap N(\{7,...,u\}) = \emptyset, \text{ and } N(i) \cap N(j) = \emptyset,
\end{equation} 
for $i\neq j \in \{7, ..., u\}$. For a set $U \subset [n]$, we denote by $(U) := (x_i \mid i \in U)$ the ideal generated by variables in $U$. The notation $(U)*(V)$ denotes the product of two linear ideals. Since $\ord_I(x_1 \cdots x_6) = 2$, it is clear that

\begin{align} \label{eq_4_3_4}
\begin{split}
 \sqrt{I^4:x^\a} & \supseteq I  + (N[\{1,...,6\}]) * (N(\{7,...,u\}) ) \\
    &+ (N[\{1,2,3\}]) * (N[\{4,5,6\}]) + \sum_{i,j \in \{7,...,u\}, ~ i  < j} (N(i)) * (N (j)).
    \end{split}
\end{align}

We first claim 
\begin{equation}\label{eq_4_3_5}
    a_j = 1 \text { for all } j = 7,\ldots, u.
\end{equation} 
\begin{proof}[Proof of \eqref{eq_4_3_5}]
Assume by contradiction that $a_t > 1$ for some $t\in \{7, ..., u\}$. Without loss of generality, we may assume that $a_j > 1$ for all $j = 7, ..., t$ and $a_j = 1$ for all $j = t+1, ..., u$. Then

\begin{equation}\label{eq_4_3_6}
    x_i \in \sqrt{I^4:x^\a} \text{ for all } i \in N(\{7,...,t\}).
\end{equation}
By Remark \ref{rem_mingens_degree_complex}, there exists a minimal generator $g$ of $\sqrt{I^4:x^\a}$ such that $x_7 | g$. Since $g$ is minimal, by \eqref{eq_4_3_3}, \eqref{eq_4_3_4} and \eqref{eq_4_3_6}, we deduce that $\supp g$ is an independent set and
\begin{equation}\label{eq_4_3_7}
    \supp g \cap N(\{7,...,t\}) = \emptyset \text{ and } |\supp g \cap (N[\{1,...,6\}] \cup N(\{t+1, ..., u\})) | \le 1.
\end{equation} 
In particular, $\sum_{j \in N(\supp g)} a_j \le 1$. Furthermore, $\ord_I(x^\a) = 2$, thus 
$$\sum_{j \in N(\supp g)} a_j + \ord_I \left ( \prod_{u\notin N[\supp g]} x_u^{a_u} \right ) \le 1 + 2 = 3,$$
a contradiction to Lemma \ref{criterion_in_power}.
\end{proof}
Thus $x^\a = x_1 \cdots x_6 \cdot x_7 \cdots x_u$. We now claim that \eqref{eq_4_3_4} is an equality, namely
\begin{align} \label{eq_4_3_8}
\begin{split}
 \sqrt{I^4:x^\a} & = I  + (N[\{1,...,6\}]) * (N(\{7,...,u\}) ) \\
    &+ (N[\{1,2,3\}]) * (N[\{4,5,6\}]) + \sum_{i,j \in \{7,...,u\}, ~ i  < j} (N(i)) * (N (j)).
    \end{split}
\end{align}
\begin{proof}[Proof of \eqref{eq_4_3_8}] Let $x_U$ be a minimal generator of $\sqrt{I^4:x^\a}$. We may assume that $U$ is an independent set. Since $\ord_I(x^\a) = 2$, by Lemma \ref{criterion_in_power}, we must have $\sum_{j\in N(U)} a_j \ge 2$. Furthermore, by assumption \eqref{eq_4_3_3}, we deduce that there are two indices $i,j \in \{1, ..., u\}$ such that $U \cap N(i) \neq 0$ and $U \cap N(j) \neq 0$. Thus $x_U$ belongs to the right hand side.
\end{proof}
We now claim:
\begin{equation}\label{eq_4_3_9}
    u = 7, \text { hence } x^\a = x_1 \cdots x_7.
\end{equation} 
\begin{proof}[Proof of \eqref{eq_4_3_9}] Assume by contradiction that $u > 7$. Let 
\begin{align*}
    J &= I + (N[\{1,2,3\}]) * (N[\{4,5,6\}]) + (N[\{1,...,6\}]) * (N(\{7,...,u\}) ) \\
    &+ \sum_{i,j \in \{7,...,u\}, ~ i  < j, (i,j) \neq (u-1,u)} (N(i)) * (N (j)).
\end{align*}
By \eqref{eq_4_3_8}, $\sqrt{I^4:x^\a} = J + (N(u-1))* (N(u)) = (J + (N(u-1)) \cap (J+(N(u)))$. Let $\Gamma_1$ and $\Gamma_2$ be the simplicial complexes corresponding to $J + (N(u-1))$ and $J + (N(u))$. By Lemma \ref{cone}, $\Delta_\a(I^4) = \Gamma_1 \cup \Gamma_2$ and $\Gamma_1 \cap \Gamma_2$ corresponds to $J + (N(u-1)) + (N(u))$. By Lemma \ref{cone}, $\Gamma_1$ is a cone over $u-1$ and $\Gamma_2$ and $\Gamma_1 \cap \Gamma_2$ are cones over $u$. Since $F \cap \supp \a = \emptyset$, $\lk_{\Gamma_1} F$ is a cone over $u-1$, $\lk_{\Gamma_2} F$ and $\lk_{\Gamma_1 \cap \Gamma_2} F$ are cones over $u$. Looking at the Mayer-Vietoris sequence for simplicial homology, $\lk_{\Delta_\a(I^4)} F$ has trivial homology, a contradiction.
\end{proof}
Thus, $x^\a = x_1 \cdots x_7$. Let $J = I + (N[\{1,2,3]) * (N[\{4,5,6\}])$. Then by \eqref{eq_4_3_8}, $\sqrt{I^4:x^\a} = (J + (N[\{1,\ldots,6\}])) \cap (J + (N(7))).$ Let $\Delta = \Delta_\a(I^4)$ and $\Gamma_1$ and $\Gamma_2$ be the simplicial complexes corresponding to $J + (N[\{1,\ldots,6\}])$ and $J + (N(7))$. Then $\Delta = \Gamma_1 \cup \Gamma_2$. By Lemma \ref{cone}, $\Gamma_2$ and $\Gamma_1 \cap \Gamma_2$ are cones over $7$. Thus $\lk_{\Gamma_2}F$ and $\lk_{\Gamma_1} F \cap \lk_{\Gamma_2}F$ have trivial homology. Applying the Mayer-Vietoris sequence we have 
$$ 0\to  \h_{i-1} (\lk_{\Gamma_1} F;\k) \to \h_{i-1} (\lk_{\Delta}F;\k) \to 0.$$
By Lemma \ref{Key0}, 
$$\reg I^4 = |\a| + i + 1 \le \reg I_{\Gamma_1} + 7.$$
Since $J + (N[\{1,...,6\}]) = I(H) + (N[\{1,...,6\}])$ where $H$ is the restriction of $I$ to $[n] \setminus N[\{1,...,6\}]$. Thus $I_{\Gamma_1} = I(H)$. Let $L$ be the restriction of $I$ to $[n] \setminus N[\{1,...,6\}] \cup \{1,2,4,5\}$. Then $L = I(H) + K$, where $K = (x_1x_2,x_4x_5)$. By induction, $\reg L^4 \le \reg \overline{L^4}$. By Theorem \ref{thm_mixed_sum}, 
$$\reg L^4 \ge \reg I(H) + \reg K^4 - 1 = \reg I(H) + 8 > \reg I^4,$$
a contradiction to Lemma \ref{restriction_inq}. That concludes the proof of the Lemma.
\end{proof}

\begin{thm}\label{fourth_pow} Let $I$ be the edge ideal of a simple graph $G$. Let $J \in \Inter(I^4, \overline{I^4})$ be an intermediate ideal. Then 
$$\reg J = \reg \overline {I^4}  = \reg I^4.$$
\end{thm}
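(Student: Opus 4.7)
The plan is to assemble the work already done in Lemma \ref{fourth_in} and Lemmas \ref{lem_4_1}, \ref{lem_4_2}, \ref{lem_4_3} into a single induction on $n = |V(G)|$. First, observe that Lemma \ref{fourth_in}, applied once to the pair $(J_1, J_2) = (I^4, J)$ and once to $(J_1, J_2) = (J, \overline{I^4})$, immediately yields the chain
$$\reg \overline{I^4} \le \reg J \le \reg I^4$$
for every $J \in \Inter(I^4, \overline{I^4})$. Consequently, the theorem is reduced to establishing the single reverse inequality $\reg I^4 \le \reg \overline{I^4}$.

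The plan for this inequality is to induct on $n$. In the base case, if $G$ has no pair of vertex-disjoint odd cycles, then Theorem \ref{expansion_edge} forces $\overline{I^4} = I^4$ and there is nothing to prove; so I may assume $n$ is large enough to accommodate such a pair. For the inductive step, I would fix an extremal exponent $(\a,i)$ of $I^4$, so in particular $x^\a \notin I^4$, and split on whether or not $x^\a$ lies in $\overline{I^4}$.

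If $x^\a \notin \overline{I^4}$, there are two sub-cases. When $\Delta_\a(I^4) = \Delta_\a(\overline{I^4})$, Lemma \ref{extremal_red} gives $\reg I^4 \le \reg \overline{I^4}$ directly; when instead $\Delta_\a(I^4) \neq \Delta_\a(\overline{I^4})$, Lemma \ref{lem_4_3} delivers the same inequality. If on the other hand $x^\a \in \overline{I^4}$, then some minimal generator $P$ of $\overline{I^4}$ with $P \notin I^4$ divides $x^\a$. Applying Theorem \ref{expansion_edge} to $P$, and using that two disjoint odd cycles already contribute at least $3$ to the sum $(|C_1|+|C_2|)/2$ while $b \ge 0$ and $(|C_1|+|C_2|)/2 + b = 4$, the only possible shapes are $(|C_1|,|C_2|,b) = (3,5,0)$, handled by Lemma \ref{lem_4_1}, or $(3,3,1)$, handled by Lemma \ref{lem_4_2}. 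In either shape, the cited lemma invokes the inductive hypothesis on a proper subgraph (via Lemma \ref{lem_inequality_special}) to conclude $\reg I^4 \le \reg \overline{I^4}$, closing the induction.

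The genuinely difficult work—verifying that every extremal $\a$ admits either a good decomposition or enough structure to deduce a contradiction via Mayer--Vietoris—was already absorbed into Lemmas \ref{lem_4_1}--\ref{lem_4_3}. The only potential subtlety in the present theorem is to check, when invoking Theorem \ref{expansion_edge} in the case $x^\a \in \overline{I^4}$, that the hypothesis $N[C_1] \cap C_2 = \emptyset$ required by Lemmas \ref{lem_4_1} and \ref{lem_4_2} is preserved; but this is exactly the \emph{Furthermore} clause of Theorem \ref{expansion_edge}, so no extra argument is needed. Hence the present statement is essentially an organisational wrap-up assembling the three case lemmas under the induction.
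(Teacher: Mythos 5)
Your proposal is correct and follows essentially the same route as the paper: Lemma \ref{fourth_in} disposes of the inequality $\reg J \le \reg I^4$ (and $\reg \overline{I^4}\le \reg J$), and the reverse inequality $\reg I^4 \le \reg \overline{I^4}$ is obtained by splitting on whether the extremal exponent satisfies $x^\a \in \overline{I^4}$, invoking Lemma \ref{extremal_red} or Lemma \ref{lem_4_3} in the first case and Theorem \ref{expansion_edge} together with Lemmas \ref{lem_4_1} and \ref{lem_4_2} (the only shapes being $(3,5,0)$ and $(3,3,1)$) in the second. Your explicit remarks on the induction on $n$ and on the \emph{Furthermore} clause of Theorem \ref{expansion_edge} only make explicit what the paper leaves implicit.
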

\begin{proof} By Lemma \ref{fourth_in}, it suffices to prove that $\reg I^4 \le \reg \overline{I^4}$. Let $(\a,i)$ be an extremal exponent of $I^4$. There are two cases.

\smallskip

\noindent\textbf{Case 1.} $x^\a \notin \overline{I^4}$. By Lemma \ref{extremal_red}, we may assume that $\Delta_\a(I^4) \neq \Delta_\a(\overline{I^4)}$. By Lemma \ref{lem_4_3}, $\reg I^4 \le \reg \overline{I^4}$.

\smallskip

\noindent\textbf{Case 2.} $x^\a \in \overline{I^4}$. Since $x^\a \notin I^4$, the conclusion follows from Theorem \ref{expansion_edge}, Lemma \ref{lem_4_1}, and Lemma \ref{lem_4_2}.
\end{proof}

We are now ready for the proof of the main theorem.
\begin{proof}[Proof of Theorem \ref{main_thm}] Follows from Theorem \ref{expansion_edge}, Theorem \ref{third_pow}, and Theorem \ref{fourth_pow}.
\end{proof}

\section{Integral closure of powers of Stanley-Reisner ideals of one-dimensional simplicial complexes}\label{sec_dim1}
In this section, we compute the regularity of intermediate ideals in $\Inter(I_\Delta^s,\overline{I_\Delta^s})$ for a one-dimensional simplicial complex $\Delta$.

\begin{thm}\label{dim1} Let $I = I_\Delta$ be the Stanley-Reisner ideal of a one-dimensional simplicial complex $\Delta$. Assume that $I$ is non-normal. Then $\girth \Delta \le 4$. Furthermore, for all $s \ge 1$, and all $J \in \Inter(I^s, \overline{I^s})$, we have 
$$\reg J = \reg I^s = \begin{cases} 3s & \text{ if } \girth \Delta = 3\\
2s + 1 & \text{ if } \girth \Delta = 4.\end{cases}$$
\end{thm}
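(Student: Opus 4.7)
I would argue by contraposition, assuming $\girth \Delta \ge 5$. Then $\Delta$ contains no triangle as a subgraph, so every minimal non-face of $\Delta$ has cardinality $2$ and $I_\Delta = I(\bar\Delta)$ is the edge ideal of the complement graph. By the Simis--Vasconcelos--Villarreal/Ohsugi criterion (equivalently the case of Theorem \ref{expansion_edge} with a non-trivial odd-cycle factor), non-normality of $I(\bar\Delta)$ forces two vertex-disjoint induced odd cycles $C_1, C_2$ in $\bar\Delta$. I split into cases: if both $|C_j|=3$, the six vertices induce $K_{3,3}$ in $\Delta$, giving $\girth \Delta \le 4$, a contradiction; if some $|C_j| \ge 5$, pick three consecutive vertices $v_1 v_2 v_3$ on that cycle — induced-ness forces $v_1 v_3 \notin \bar\Delta$, hence $v_1 v_3 \in \Delta$, and any $u \in V(C_{3-j})$ satisfies $v_1 u, v_3 u \in \Delta$ (no $\bar\Delta$-edges between the two cycles), so $v_1 v_3 u$ is a triangle in $\Delta$, contradiction again.

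\textbf{Stage 2 (lower bound).} For $\girth \Delta = 3$, fix a triangle $T=\{a,b,c\}$ in $\Delta$. Since all three pairs of $T$ are edges of $\Delta$ and $T$ itself is a non-face, the restriction $I_\Delta|_T$ equals the principal ideal $(x_a x_b x_c)$, so $\reg ((I_\Delta|_T)^s) = 3s$, and Lemma \ref{restriction_inq} upgrades this to $\reg I^s \ge 3s$. For $\girth \Delta = 4$, the Stage 1 argument (run under triangle-freeness) shows that non-normality in fact forces an induced $K_{3,3}$ on some six-vertex set $U \subseteq V(\Delta)$; the restriction $I_\Delta|_U = I(K_3 \sqcup K_3)$, and Theorem \ref{thm_mixed_sum} applied to the two copies of $I(K_3)$ (using the linear-resolution fact $\reg I(K_3)^i = 2i$) produces $\reg I(K_3 \sqcup K_3)^s = 2s+1$, so Lemma \ref{restriction_inq} yields $\reg I^s \ge 2s+1$.

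\textbf{Stage 3 (upper bound and rigidity).} Both $\reg I^s \le f(s)$ (with $f(s)=3s$ or $2s+1$) and $\reg J = \reg I^s$ for every $J \in \Inter(I^s, \overline{I^s})$ are to be proved by simultaneous induction on $|V(\Delta)|$, following the scheme of Section \ref{sec_proof} and \cite{MV2}. For the rigidity, given $J_1 \subseteq J_2$ in $\Inter(I^s,\overline{I^s})$ and an extremal exponent $(\a,i)$ of $J_2$: either $\sqrt{J_2:x^\a} = \sqrt{J_1:x^\a}$ and Lemma \ref{extremal_red} gives $\reg J_2 \le \reg J_1$, or one of the extra generators of $\overline{I^s}$ (arising from a disjoint-odd-cycle product in $\bar\Delta$) contributes a variable $x_t \in \sqrt{J_2:x^\a}$ with $t \notin \supp\a$ via an analogue of Lemmas \ref{lem_colon_3}--\ref{lem_colon_4}; then Lemma \ref{red0} and restriction to $[n]\setminus\{t\}$ close the induction. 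For the absolute bound, pick an extremal $(\a,i)$ of $I^s$: either $\a$ admits a good decomposition (Lemma \ref{rad_good_colon}) reducing to a strictly smaller complex through Theorem \ref{thm_mixed_sum}, or $\supp\a$ is confined inside a triangle (girth 3) or an induced $K_{3,3}$ (girth 4), in which case the Stage 2 calculation caps $|\a|+i \le f(s)$ directly.

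\textbf{Main obstacle.} The hardest piece is the absolute upper bound $\reg I^s \le f(s)$: it requires, for each extremal $(\a,i)$, producing either a good decomposition or a concrete geometric witness (triangle or $K_{3,3}$) that caps $|\a|+i$. In the girth-$4$ case this amounts to showing $\reg I(\bar\Delta)^s \le 2s+1$ for the specific class of complements $\bar\Delta$ of triangle-free $\Delta$ (so $\alpha(\bar\Delta)\le 2$, and $\mu(\bar\Delta)=2$ in the non-normal regime), and carrying the good-decomposition reduction through all $\a$ whose support strays outside any fixed embedded $K_3\sqcup K_3$. The girth-$3$ case is further complicated by the triangle generators of $I_\Delta$, which enlarge $\sqrt{I^s:x^\a}$ and require a more delicate bookkeeping in the Lemma \ref{rad_good_colon}-style reductions — but in both cases the machinery is a direct adaptation of the arguments in Section \ref{sec_proof}.
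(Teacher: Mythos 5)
Your Stage 1 and Stage 2 are essentially correct and agree with the paper (Stage 1 fills in the detail behind the paper's one-line "the two odd cycles must be triangles", and the restrictions to a triangle, resp.\ an induced $K_{3,3}$, are exactly how the lower bounds are obtained; note only that for an arbitrary $J \in \Inter(I^s,\overline{I^s})$ you need Corollary \ref{restriction_intermediate} to transport the lower bound, since $I^s \subseteq J$ alone does not give $\reg J \ge \reg I^s$). The genuine gap is Stage 3, which is the entire content of the theorem and which you leave as "a direct adaptation of the arguments in Section \ref{sec_proof}". The paper does not adapt that machinery at all. The missing idea is to exploit the full chain $I^s \subseteq J \subseteq \overline{I^s} \subseteq I^{(s)}$: the companion paper [MV2] already proves $\reg I^s = 3s$ (resp.\ $2s+1$) and, for the exponents $\a$ that can be extremal, that $\sqrt{I^{(s)}:x^\a} = \sqrt{I^s:x^\a}$. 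Sandwiching gives $\sqrt{J:x^\a} = \sqrt{I^s:x^\a}$, hence $\Delta_\a(J) = \Delta_\a(I^s)$ for every intermediate $J$ simultaneously, and Lemma \ref{extremal_red} immediately yields $\reg J \le \reg I^s$. No good decompositions, no induction on $|V(\Delta)|$, and no analysis of which generators of $\overline{I^s}$ were added to form $J$ are needed.

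Moreover, your proposed substitute would not go through as described in the girth-$3$ case: there $I_\Delta$ has cubic minimal generators, so it is \emph{not} an edge ideal, Theorem \ref{expansion_edge} does not apply, and "extra generators of $\overline{I^s}$ arising from a disjoint-odd-cycle product in $\bar\Delta$" is not a valid description of what must be controlled — the paper has no structure theorem for $\overline{I^s}$ in that case and avoids needing one precisely by routing through $I^{(s)}$. Likewise your claimed dichotomy for the absolute upper bound (either $\a$ has a good decomposition or $\supp\a$ sits inside a triangle/$K_{3,3}$) is asserted rather than proved, and establishing it would amount to reproving \cite[Theorem 1.1]{MV2} from scratch. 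In short: the skeleton is plausible but the load-bearing step is missing, and the efficient fix is the symbolic-power sandwich rather than the Section \ref{sec_proof} induction.
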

\begin{proof} If $\girth \Delta \ge 4$ then $I_\Delta$ is the edge ideal of a graph $G$ which is a complement of $\Delta$. By Theorem \ref{expansion_edge}, $I$ is non-normal if and only if $G$ has an induced subgraph consisting of two disjoint odd cycles. Since $\dim \Delta = 1$, these two odd cycles must be triangles. Hence, $\girth \Delta \le 4$. Let $(\a,i)$ be an extremal exponent of $J$. By Lemma \ref{partial_degree} and Remark \ref{rem_extremal_set}, $a_j \le s-1$ for all $j \in [n]$. There are two cases. 

\noindent\textbf{Case 1.} $\girth \Delta = 3$. The arguments in \cite[Lemma 3.2]{MV2} carry over give us $\reg J = 3s$.

\noindent\textbf{Case 2.} $\girth \Delta = 4$. Then $I = I_\Delta$ is the edge ideal of a graph $G$ which is the complement of $\Delta$. The lower bound $\reg J \ge 2s + 1$ follows from Lemma \ref{restriction_inq}. Thus, it suffices to prove that $|\a| +i \le 2s$. The argument in \cite[Lemma 3.3]{MV2} stills apply, and thus if $i = 2$, then $|\a| \le 2s-2$, hence $|\a| + i \le 2s$. Now assume that $i \le 1$, thus we may assume that $|\a| \ge 2s-1$. Since $I^s \subseteq J \subseteq \overline{I^s} \subseteq I^{(s)}$, we have
$$\sqrt{I^s:x^\a} \subseteq \sqrt{J:x^\a} \subseteq \sqrt{I^{(s)}:x^\a} = \sqrt{I^s:x^\a}$$
where the last equality follows from Lemma \ref{Key0} and \cite[Lemma 3.5]{MV2}. By Lemma \ref{Key0}, $\Delta_\a(J) = \Delta_\a(I^s)$. By Lemma \ref{extremal_red} and \cite[Theorem 1.1]{MV2}, we deduce that 
$$\reg J \le \reg I^s = 2s+1.$$
That concludes the proof of the Theorem.
\end{proof}

\begin{exm} Let 
\begin{align*}
    I = & (x_1x_2,x_1x_3,x_2x_3, x_4x_5,x_4x_6,x_5x_6,x_3x_7,x_1x_4x_7,x_2x_4x_7,x_1x_5x_7,x_2x_5x_7,\\
    & x_6x_7,x_1x_8,x_2x_4x_8,x_3x_4x_8,x_5x_8,x_2x_6x_8,x_3x_6x_8,x_2x_7x_8,x_4x_7x_8) \subseteq \k[x_1,...,x_8].
\end{align*}
It is easy to check with Macaulay2 \cite{M2} that $\dim \Delta(I) = 1$. Furthermore, $I$ is non-normal, as $f = x_1\cdots x_6 \in \overline{I^3}$, but $f \notin I^3$. For each $s \ge 0$, let $f_1 = f \cdot (x_1x_8)^s, f_2 = f\cdot  (x_2x_6x_8)^s$. Then $f_1, f_2$ are minimal generators of $\overline{I^{s+3}}$. By Theorem \ref{dim1}, 
$$\reg \left ( I^{s+3} + (f_1) \right )  = \reg \left ( I^{s+3} + (f_1,f_2) \right ) = 3s + 9,$$
for all $s \ge 0$.
\end{exm}

There are also plenty of examples of non-normal edge ideals of graphs $I(G)$ with $\dim \Delta(I) = 1$, and $G$ has more than two odd cycles.
\begin{exm} Let
\begin{align*}
    I = & (x_1x_2,x_1x_3,x_2x_3, x_4x_5,x_4x_6,x_4x_7,x_5x_6,x_5x_7,x_6x_7,x_3x_8,x_4x_8,x_5x_8,x_6x_8,x_7x_8,\\
    & x_1x_9,x_2x_9,x_3x_9,x_4x_9,x_7x_9,x_1x_{10},x_2x_{10}, x_3x_{10},x_5x_{10},x_6x_{10},x_8x_{10}) \subseteq \k[x_1,...x_{10}].
\end{align*}
It is easy to check with Macaulay2 \cite{M2} that $\dim \Delta(I) = 1$. For each $s \ge 0$, let $f_1 = x_1\cdots x_6 (x_8x_{10})^s$, $f_2 = x_1\cdots x_5 x_7 (x_4x_9)^s$, $f_3 = x_1\cdots x_4 x_6x_7 (x_7x_8)^s$. Then $f_1, f_2,f_3$ are minimal generators of $\overline{I^{s+3}}$. By Theorem \ref{dim1}, 
$$\reg \left ( I^{s+3} + (f_1,f_2) \right ) = \reg \left ( I^{s+3} + (f_1,f_3) \right ) = 2s+7,$$
for all $s \ge 0$.
\end{exm}

\section{Characteristic dependence of regularity of powers}\label{sec_ex}
In this section, we provide an example of an ideal whose regularity of powers depends on the characteristic of the base field. We first give a general statement which is a simple consequence of \cite[Theorem 1.1]{NV1}. 
\begin{thm}\label{thm_ex} Let $H$ be a graph whose edge ideal $I(H)$ satisfies the condition
$$\reg I(H)^s \le \reg I(H) + 2s - 2 \text{ for all } s \ge 1.$$
Let $T$ be a graph whose edge ideal $I(T)$ satisfies the condition 
$$\reg I(T)^s = \reg I(T) + 2s-2 \text{ for all } s \ge 1.$$ Let $G = H \cup T$ be the disjoint union of $H$ and $T$. Then for all $s \ge 1$ we have 
$$\reg I(G)^s = \reg I(H) + \reg I(T) + 2s - 3.$$
\end{thm}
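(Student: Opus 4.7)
The plan is a direct application of Theorem \ref{thm_mixed_sum}. Since $H$ and $T$ have disjoint vertex sets, the edge ideal $I(G) = I(H) + I(T)$ is exactly the mixed sum considered in that theorem, living in the polynomial ring on the union of the two variable sets. Set $a = \reg I(H)$ and $b = \reg I(T)$. Theorem \ref{thm_mixed_sum} yields
\begin{equation*}
\reg I(G)^s = \max_{i \in [1,s-1],\, j \in [1,s]} \bigl(\reg I(H)^i + \reg I(T)^{s-i},\ \reg I(H)^j + \reg I(T)^{s-j+1} - 1\bigr).
\end{equation*}

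For the upper bound I would substitute the two hypotheses into each term. A first-type term is bounded by $(a + 2i - 2) + (b + 2(s-i) - 2) = a + b + 2s - 4$, and a second-type term is bounded by $(a + 2j - 2) + (b + 2(s-j+1) - 2) - 1 = a + b + 2s - 3$. Taking the maximum gives $\reg I(G)^s \le a + b + 2s - 3$.

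For the matching lower bound I would invoke the explicit inequality $\reg I(G)^s \ge \reg I(H)^j + \reg I(T)^{s-j+1} - 1$ recorded at the end of Theorem \ref{thm_mixed_sum}, specialized to $j = 1$. The hypothesis on $H$ holds with equality at $s = 1$ (it reads $\reg I(H) \le \reg I(H)$), so $\reg I(H)^1 = a$, while the hypothesis on $T$ gives $\reg I(T)^s = b + 2s - 2$ with equality by assumption. The $j = 1$ instance therefore contributes $a + (b + 2s - 2) - 1 = a + b + 2s - 3$, completing the lower bound.

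There is no serious obstacle: the argument is a one-step reduction to Theorem \ref{thm_mixed_sum} followed by a short arithmetic comparison. The only point worth emphasizing is the asymmetric role of the two hypotheses; the equality on $T$ is what forces a genuine lower bound, and it is harnessed at the specific index $j = 1$, where the $H$-hypothesis is automatically tight.
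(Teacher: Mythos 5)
Your proposal is correct and follows essentially the same route as the paper: both apply Theorem \ref{thm_mixed_sum}, bound the first-type terms by $a+b+2s-4$ and the second-type terms by $a+b+2s-3$, and observe that the $j=1$ term attains this value exactly. The only cosmetic difference is that you phrase the attainment as a separate lower bound via the ``in particular'' inequality, whereas the paper simply evaluates that term inside the maximum.
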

\begin{proof}
By Theorem \ref{thm_mixed_sum}, we have 
$$\reg I(G)^s = \max_{i\in[1,s-1],j\in[1,s]} (\reg I(H)^i + \reg I(T)^{s-i}, \reg I(H)^j + \reg I(T)^{s-j+1} - 1).$$
For all $1 \le i \le s-1$, we have  
\begin{align*}
    \reg I(H)^i + \reg I(T)^{s-i} &\le \reg I(H) + 2i- 2 + \reg I(T) + 2(s-i)-2\\
    & < \reg I(H) + \reg I(T) + 2s-3.
\end{align*} 
For all $2 \le j \le s$, we have 
\begin{align*}
    \reg I(H)^j + \reg I(T)^{s-j+1} &\le \reg I(H) + 2j - 2 + \reg I(T) + 2(s-j) \\
    & = \reg I(H) + \reg I(T)+2s-2.
\end{align*} 
Finally, 
$$\reg I(H) + \reg I(T)^s = \reg I(H) + \reg I(T) + 2s- 2.$$
The conclusion follows.
\end{proof}

\begin{cor}\label{cor_char_dependence} Let $G = C_1 \cup C_2 \cup H$ be the disjoint union of two triangles and a bipartite graph $H$ whose $\reg I(H)$ depends on the characteristic of the base field (e.g. the Dalili-Kummini's example \cite{DK}). Then for all $s \ge 1$, we have 
$$\reg I^s = \reg I^{(s)} = \reg \overline{I^s} = \reg I(H) +2s.$$
In particular, $\reg I^s, \reg \overline{I^s}, \reg I^{(s)}$ depend on the characteristic of the base field as $\reg H$ does.
\end{cor}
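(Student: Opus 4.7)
The plan is to apply Theorem~\ref{thm_ex} to the decomposition $G = T \cup H$, where $T = C_1 \cup C_2$ is the disjoint union of the two triangles and $H$ is the bipartite (Dalili--Kummini) graph. I would first verify the two hypotheses of Theorem~\ref{thm_ex} for this pair.

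For the hypothesis on $T$, namely $\reg I(T)^s = \reg I(T) + 2s - 2$ for all $s \ge 1$: since $I(T) = I(C_1) + I(C_2)$ is a sum of edge ideals in disjoint variable sets, and each $I(C_i)=I(K_3)$ has a linear resolution in every power (so $\reg I(C_i)^s = 2s$), Theorem~\ref{thm_mixed_sum} applies and yields
\[
\reg I(T)^s \;=\; \max\bigl( 2i + 2(s-i),\ 2j + 2(s-j+1) - 1 \bigr) \;=\; 2s + 1 \;=\; \reg I(T) + 2s - 2.
\]
For the hypothesis on $H$, namely $\reg I(H)^s \le \reg I(H) + 2s - 2$ for all $s \ge 1$: this is standard for edge ideals of bipartite graphs, where the sequence $\reg I(H)^s$ grows by at most $2$ at each step starting from $\reg I(H)$. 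Theorem~\ref{thm_ex} then delivers
\[
\reg I(G)^s \;=\; \reg I(H) + \reg I(T) + 2s - 3 \;=\; \reg I(H) + 2s,
\]
establishing the ordinary-powers statement.

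For $\reg \overline{I(G)^s}$ and $\reg I(G)^{(s)}$, I would run parallel arguments using mixed-sum regularity formulas for integral closures and symbolic powers of ideals in disjoint variable rings. The key simplifications are: since $H$ is bipartite, $I(H)$ is normally torsion-free and has a normal edge ideal, so $I(H)^{(s)} = \overline{I(H)^s} = I(H)^s$; and since each $C_i$ contains only one odd cycle, $I(C_i)$ is normal, so $\overline{I(C_i)^s} = I(C_i)^s$. Substituting these equalities into the mixed-sum analogues and re-running the calculation in the first step produces the same dominant term $2s+1$ from the $T$-side and therefore the same total $\reg I(H) + 2s$ in both cases.

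The main obstacle will be invoking (or establishing on the spot) the symbolic-power and integral-closure analogues of Theorem~\ref{thm_mixed_sum}, and checking that the same ``$(j,s-j+1)$-type'' term dominates in all three settings; once this is in place the triple equality follows uniformly. The characteristic dependence in the final conclusion is then inherited directly from $\reg I(H)$ for the Dalili--Kummini graph, since the additive term $2s$ does not depend on $\mathrm{char}\,\k$.
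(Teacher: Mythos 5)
Your handling of the ordinary and symbolic powers is correct and is essentially the paper's argument: the same decomposition $G=(C_1\cup C_2)\cup H$, the computation $\reg I(C_1\cup C_2)^s=2s+1=\reg I(C_1\cup C_2)+2s-2$ from Theorem~\ref{thm_mixed_sum}, the bipartite bound $\reg I(H)^s\le\reg I(H)+2s-2$ (a known theorem of Jayanthan--Narayanan--Selvaraja, which the paper likewise invokes without comment when it applies Theorem~\ref{thm_ex}), and, for symbolic powers, the mixed-sum formula for symbolic powers from \cite[Theorem 5.11]{HNTT} together with $I(H)^{(s)}=I(H)^s$ and $\reg I(C_i)^{(s)}=2s$.

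The gap is the integral-closure case. You propose to ``run a parallel argument using a mixed-sum regularity formula for integral closures,'' but no such analogue of Theorem~\ref{thm_mixed_sum} is available: the introduction of this very paper states that ``we do not have a mixed sum type formula for the regularity of the integral closure of powers,'' and that absence is exactly why Section~\ref{sec_proof} has to treat disconnected graphs by hand. Even though in this example Theorem~\ref{expansion_edge} gives the clean description $\overline{I^s}=I^s+x_{C_1}x_{C_2}\,I^{s-3}$, the proof of Theorem~\ref{thm_mixed_sum} rests on a filtration of $\sum_i P^iQ^{s-i}$ that does not transfer automatically to the corresponding sum with integral closures, so ``re-running the calculation'' is not a step you can actually execute with the tools at hand. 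The paper closes this case instead by citing Kumar--Kumar \cite[Theorem 4.14]{KK}, which gives $\reg\overline{I^s}=\reg I^s$ for all $s$ for such graphs; you need either that external result or a genuinely new argument (the degree-complex comparison of Section~\ref{sec_proof} only covers $s\le 4$) to obtain the integral-closure equality for all $s$.
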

\begin{proof}By Theorem \ref{thm_ex} 
$$\reg I^s = \reg I(H) + \reg I(C_1 \cup C_2) + 2s-3 = \reg I(H) + 2s.$$
With similar argument as in the proof of Theorem \ref{thm_ex}, by \cite[Theorem 5.11]{HNTT}, and the fact that $I(H)^s = I(H)^{(s)}$, $\reg I(C_1)^{(s)} = \reg I(C_2)^{(s)} = 2s$ for all $s \ge 1$, we deduce that 
$$\reg I^{(s)} = \reg I(H) + \reg I(C_1 \cup C_2) + 2s-3 = \reg I(H) + 2s.$$
By \cite[Theorem 4.14]{KK}, we have $\reg I^s = \reg \overline{I^s}$. The Corollary follows.
\end{proof}

\begin{rem} Let $H$ be the bipartite graph in Dalili-Kummini's example. Computation with Macaulay2 \cite{M2} shows that $\reg I(H)^2 = 6$ does not depend on the characteristic of the base field. 

Let $G$ be the union of $H$ and an edge. Then $G$ is a bipartite graph. By Theorem \ref{thm_ex}, for all $s \ge 1$, $\reg I(G)^s = \reg I(H) + 2s - 1$ depend on the characteristic of the base field.

To get an example of a connected graph with the same phenomenon as in Corollary \ref{cor_char_dependence}, let $P$ be the fiber product of the edge ideal $I(G)$ in Corollary \ref{cor_char_dependence} with any edge ideal $I(L)$ of a bipartite graph such that $\reg I(L)^s < 2s + 4$. By \cite[Corollary 5.2]{NV2}, \cite[Lemma 6.2]{MV1}, and \cite[Theorem 4.14]{KK}, we deduce that $\reg P^s = \reg P^{(s)} = \reg \overline{P^s} = \reg I(G)^s$ depend on the characteristic of the base field.
\end{rem}

\begin{rem}[Katzman's example]Let $I\subseteq \k[x_1,\ldots,x_{11}]$ be the following edge ideal:
\begin{gather*}
I=(x_1x_2, x_1x_6, x_1x_7, x_1x_9, x_2x_6, x_2x_8, x_2x_{10}, x_3x_4, x_3x_5, x_3x_7, x_3x_{10},\\
x_4x_5, x_4x_6, x_4x_{11}, x_5x_8, x_5x_9, x_6x_{11}, x_7x_9, x_7x_{10}, x_8x_9, x_8x_{10}, x_8x_{11}, x_{10}x_{11}).
\end{gather*}
Computation with Macaulay2 \cite{M2} shows that $\reg I^2 = 5$ does not depend on the characteristic of the base field. It is interesting if we can prove that 
$$\reg I^s \le \reg I + 2s - 2,$$
for all $s \ge 2$. If this is true, then the mixed sum $P$ of $I$ with the edge ideal of a triangle gives an example of a non-normal edge ideal on $14$ variables with 
$$\reg P^s = \begin{cases} 3 + 2s & \text{ if } \charr \k = 2 \\
2 + 2s & \text{ if } \charr \k \neq 2.\end{cases},$$
for all $s \ge 1$. 
\end{rem}

\end{document}